\title[Character Sheaves over Non-Archimedean Local Fields]{Character Sheaves of Algebraic Groups Defined over Non-Archimedean Local Fields}
\author{Clifton~Cunningham}
\address{Department of Mathematics, University of Calgary, Canada}
\email{cunning@math.ucalgary.ca} 
\author{Hadi Salmasian}
\address{Department of Mathematics, University of Alberta, Canada}
\email{salmasia@ualberta.ca} 
\date{\today}
\newtheorem{theorem}{Theorem}
\newtheorem{proposition}{Proposition}
\newtheorem{corollary}{Corollary}
\newtheorem{lemma}{Lemma}
\theoremstyle{definition}
\newtheorem{definition}{Definition}
\theoremstyle{remark}
\newtheorem{remark}{Remark}
\renewcommand{\AA}{{\mathbb{A}}}
\newcommand{\NN}{{\mathbb{N}}}
\newcommand{\ZZ}{{\mathbb{Z}}}
\newcommand{\QQ}{{\mathbb{Q}}}
\newcommand{\CC}{{\mathbb{C}}}
\newcommand{\EE}{{\bar{\QQ}_\ell}}
\newcommand{\FF}{{\mathbb{F}}}
\newcommand{\KKq}{{\bar{\Kq}}}
\newcommand{\tKq}{{\Kq^{\operatorname{tr}}}}
\newcommand{\RKK}{{\mathfrak{o}_{\KKq}}}
\newcommand{\kkq}{{\bar{\kq}}}
\newcommand{\Kq}{{\mathbb{K}}}
\newcommand{\Rq}{{\mathfrak{o}_{\Kq}}}
\newcommand{\kq}{{\Bbbk}}
\newcommand{\sch}[1]{{#1}}
\newcommand{\Ksch}[1]{{{#1}}}
\newcommand{\KKsch}[1]{{{#1}_{\KKq}}}
\newcommand{\Rsch}[1]{{\underline{#1}}}
\newcommand{\ksch}[1]{{{#1}}}
\newcommand{\kksch}[1]{{{#1}_{\kkq}}}
\newcommand{\Sch}[1]{{\underline{#1}}}
\newcommand{\RSch}[2]{{\underline{#1}_{#2}}}
\newcommand{\RRSch}[2]{{ {\underline{\bar #1}}_{#2} }}
\newcommand{\KSch}[2]{{(\underline{#1}_{#2})_{\eta}}}
\newcommand{\KKSch}[2]{{(\underline{#1}_{#2})_{\bar \eta}}}
\newcommand{\kSch}[2]{{(\underline{#1}_{#2})_{{\bf s}}}}
\newcommand{\kkSch}[2]{{ (\underline{#1}_{#2})_{\bar{\bf s}}}}
\newcommand{\rkSch}[2]{{(\underline{#1}_{#2})_{{\bf s}}^{\operatorname{red}} }}
\newcommand{\rkkSch}[2]{{(\underline{#1}_{#2})_{\bar{\bf s}}^{\operatorname{red}} }}
\newcommand{\Lq}{{\Kq'}}
\newcommand{\RLq}{{\mathfrak{o}_{\Kq'}}}
\newcommand{\LBsch}[1]{{\Ksch{#1}}}
\newcommand{\LLBsch}[1]{{\KKsch{#1}}}
\newcommand{\RLBSch}[2]{{ \underline{#1}_{#2} }}
\newcommand{\RRLBSch}[2]{{ {\underline{\bar #1}}_{#2} }}
\newcommand{\LBSch}[2]{{ (\underline{#1}_{#2})_{\eta'}}}
\newcommand{\LLBSch}[2]{{ (\underline{#1}_{#2})_{\bar{\eta}}}}
\newcommand{\lBSch}[2]{{ (\underline{#1}_{#2})_{{\bf s}'}}}
\newcommand{\llBSch}[2]{{ (\underline{#1}_{#2})_{\bar{\bf s}} }}
\newcommand{\Lsch}[1]{{\Ksch{#1}_{\Lq}}}
\newcommand{\RLSch}[2]{{{\underline{#1}_{#2}}_{\RLq} }}
\newcommand{\llSch}[2]{{ (\underline{#1}_{#2})_{\bar{\bf s}} }}
\newcommand{\RESFAIS}[3]{{\RES{#2}{#3}\hskip-2pt\fais{#1}}}
\newcommand{\fais}[1]{{ \mathcal{#1} }}
\newcommand{\kPsch}[2]{{ {#1}_{#2} }}
\newcommand{\kkPsch}[2]{{ ({#1}_{#2})_{\bar{\bf s}} }}
\newcommand{\rkPsch}[2]{{ \sch{P}_{#2} }}
\newcommand{\rkkPsch}[2]{{ (\sch{P}_{#2})_{\bar{\bf s}} }}
\def\cf{{{\it cf.\ }}}
\def\ie{{{\it i.e.,\ }}}
\newcommand{\tq}{{\ \vert\ }}
\newcommand{\iso}{{\ \cong\ }}
\newcommand{\Hom}{{\operatorname{Hom}}}
\newcommand{\ceq}{{\, := \, }}
\newcommand{\res}{{\operatorname{res}}}
\newcommand{\ind}{{\operatorname{ind}}}
\newcommand{\Spec}[1]{{\operatorname{Spec}(#1)}}
\newcommand{\Ind}{{\operatorname{Ind}}}
\newcommand{\Res}{{\operatorname{Res}}}
\newcommand{\cRes}{{\operatorname{cRes}}}
\newcommand{\catM}{{\mathcal{M}}}
\newcommand{\proj}{{\operatorname{pr}}}
\newcommand{\id}{{\, \operatorname{id}}}
\newcommand{\Frob}{{\operatorname{F\hskip-1pt rob}}}
\newcommand{\Aut}{{\operatorname{Aut}}}
\newcommand{\obj}{{\operatorname{obj}}}
\newcommand{\Trace}{{\operatorname{Trace}\, }}
\newcommand{\chf}[1]{{\chi_{#1}}}
\newcommand{\Hecke}{{\mathcal{H}}}
\newcommand{\RES}[2]{{\operatorname{\textsc{res}}_\RRSch{#1}{#2} }}
\newcommand{\elliptic}{{\operatorname{ell}}}
\newcommand{\GL}{{\operatorname{GL}}}
\newcommand{\Gal}{{\operatorname{Gal}}}
\newcommand{\CHF}[3]{{\chf{\varphi_{\fais{#1},{#3}}}}}
\newcommand{\NC}[1]{{ {{\rm R}\Psi}_{#1}}}
\newcommand{\cdef}[1]{{{\it #1}}}
\begin{document}

\begin{abstract}
This paper concerns character sheaves of connected reductive algebraic groups defined over non-Archimedean local fields and their relation with characters of smooth representations.
Although character sheaves were devised with characters of representations of finite groups of Lie type in mind, character sheaves are perfectly well defined for reductive algebraic groups over any algebraically closed field.  Nevertheless, the relation between character sheaves of an algebraic group $G$ over an algebraic closure of a field $K$ and characters of representations of $G(K)$ is well understood only when $K$ is a finite field and when $K$ is the field of complex numbers.
In this paper we consider the case when $K$ is a non-Archimedean local field and explain how to match certain character sheaves of a connected reductive algebraic group $G$ with virtual representations of $G(K)$. In the final section of the paper we produce examples of character sheaves of general linear groups and matching admissible virtual representations.
\end{abstract}

\keywords{perverse sheaves; character sheaves; local fields; admissible representations; nearby cycles; representation theory; algebraic geometry}
%
\subjclass{20G25 Linear algebraic groups over local fields and their integers; 22E50 Representations of Lie and linear algebraic groups over local fields; 22E35 Analysis on $p$-adic Lie groups; 32S30 Deformations of singularities; nearby cycles; 32S60 Stratifications; constructible sheaves; intersection cohomology.}

\maketitle

\section*{Introduction}
At the beginning of the paper introducing character sheaves of connected reductive algebraic groups (\cite{CS}), George Lusztig wrote: 
\begin{quotation}
{\it  This  paper  is an attempt  to  construct  a geometric  theory  of characters  of a  reductive  algebraic  group  $G$  defined  over  an  algebraically  closed  field.  We  are seeking  a theory  which  is as close as possible  to the theory of irreducible  (complex)  characters  of the corresponding  groups  $G(\mathbb{F}_q)$  over  a finite  field  $\mathbb{F}_q$,  and  yet  it  should  have a meaning  over  algebraically  closed  fields.  The  basic  objects  in  the  theory  are  certain  irreducible  ($\ell$-adic)  perverse  sheaves \ldots on  $G$;  they  are the  analogues  of the  irreducible  ($\ell$-adic)  representations  of $G(\mathbb{F}_q)$  and  are called  the  character  sheaves of $G$.}
\end{quotation}
Using the Grothendieck-Lefschetz fixed-point formula, Lusztig went on to show that functions corresponding to Frobenius-stable character sheaves of  $G$ form a basis for the $\EE$-vector space spanned by characters of $G(\FF_q)$, in many cases. Moreover, he also introduced the machinery from which the change-of-basis matrix can be determined, justifying the appellation `character' for the perverse sheaves under consideration. 

Although character sheaves are indeed defined for connected reductive algebraic groups $\KKsch{G}$ over arbitrary algebraically closed fields $\KKq$, no relation has previously been established between character sheaves of $\KKsch{G}$ and representations of $\Ksch{G}(\Kq)$ except when $\Kq = \FF_q$ and when $\Kq = \CC$. In this paper we consider the case when $\Kq$ is a non-Archimedean local field. We introduce machinery which establishes that there is a close relation between certain character sheaves of connected reductive algebraic groups $\KKsch{G}$ defined over non-Archimedean local fields $\Kq$ and characters of certain virtual admissible representations of the group $\Ksch{G}(\Kq)$. 
%

By contrast, character sheaves of algebraic groups defined over finite fields have been used by several people (notably \cite{W}, and less notably, \cite{C}) to construct important distributions on algebraic groups  over non-Archimedean local fields  and to study characters of admissible representations. Likewise, in \cite{Ldual}, Lusztig used character sheaves of the Langlands dual group (a complex algebraic group) to classify admissible representations of $\Ksch{G}(\Kq)$, when $\Kq$ is non-Archimedean. These are quite different uses of character sheaves.

Characters of admissible representations are distributions on the Hecke algebra of $\Ksch{G}(\Kq)$. (In fact, these distributions are represented by functions on the dense subset $\Ksch{G}_{\rm reg}(\Kq)$ of $\Ksch{G}(\Kq)$ consisting of regular elements, but they do have singularities off this set.) Accordingly, in order to establish a connection between character sheaves of reductive algebraic groups defined  over non-Archimedean local fields  and characters of admissible representations, we need something like a sheaves-{\it distributions} dictionary. This paper establishes a partial result in this direction by explaining how to compare certain character sheaves of $\KKsch{G}$ with certain virtual representations of $\Ksch{G}(\Kq)$, using an idea inspired by a character formula due to Schneider-Stuhler \cite[Prop.IV.1.5]{SS}. Grossly simplified, one may say that we use the Frobenius map on the special fibres of smooth integral models corresponding to parahoric subgroups to compare the nearby cycles sheaves of character sheaves of $\KKsch{G}$ with representations of finite reductive quotients of parahoric subgroups obtained by compact restriction from virtual admissible representations of $\Ksch{G}(\Kq)$. Precisely what we mean by this is explained in the paper. We also show that certain character sheaves of $G_{\bar{\mathbb{K}}}$ naturally define distributions on $G(\mathbb{K})$.

We claim that character sheaves of reductive algebraic groups $\Ksch{G}$ defined  over non-Archimedean local fields  $\Kq$ are indeed related to characters of admissible representations by this machinery.  To support this claim, in the last section of this paper we consider the case of general linear groups. We show that if $\pi$ is a (generalised) principal series representation induced from a supercuspidal representation of a Levi subgroup of $\GL(N,\Kq)$, then there is perverse sheaf $\fais{F}$ on $\GL(N)_\KKq$ such that $(-1)^N \fais{F}$ matches $\pi$, in the sense explained in this paper.

The reason our first examples of character sheaves of algebraic groups defined  over non-Archimedean local fields are for general linear groups is that the relation between character sheaves and characters is simplest in this case. As we will explain in future work, in general, anything resembling a change-of-basis matrix between character sheaves and matching representations will involve endoscopic groups and will therefore be more complicated than the case $\GL(N)$ may suggest. However, in should be emphasized that many of the results of this paper 
apply to arbitrary connected reductive algebraic groups.

The last theorem of this paper illustrates an important aspect 
of character sheaves of algebraic groups defined over non-Archimedean local fields: it unifies parabolic and compact induction under one framework, at least for representations of $\GL(N,\Kq)$ of depth zero. This observation is the point of departure for our forthcoming  work on endoscopy and character sheaves.

In summary, this paper shows that certain aspects of the harmonic analysis of characters of admissible representations of algebraic groups over non-Archimedean local fields are encoded in character sheaves of the algebraic group itself, in certain cases at least.  It is in this sense that character sheaves of algebraic groups defined over non-Archimedean local fields are indeed sheaves for characters of representations. 

\centerline{* \ * \ * }


We now briefly state the main results and definitions of this paper. They are treated with greater precision in the body of the paper.

Let {$\Kq$} be a non-Archimedean local field and {$\Ksch{G}$} be a connected 
reductive algebraic over $\Kq$. Set $\KKsch{G} \ceq \Ksch{G} \times_\Spec{\Kq} \Spec{\KKq}$.
For each element $x$ of the (extended) Bruhat-Tits building $I(\Ksch{G},\Kq)$ of $\Ksch{G}(\Kq)$, let $\RSch{G}{x}$ be the smooth connected integral model  for $\Ksch{G}$ such that $\RSch{G}{x}(\Rq) = \Ksch{G}(\Kq)_x$. 

Suppose $\fais{F}\in \obj D^b_c(\KKsch{G},\EE)$ and $x\in I(\Ksch{G},\Kq)$. 
The special fibre $\kSch{G}{x}$ of the integral model $\RSch{G}{x}$ is an algebraic group, and we push the sheaf of nearby cycles $\NC{\RRSch{G}{x}} \fais{F}$ forward (with compact supports) from $\kkSch{G}{x}$ to the maximal reductive quotient $\nu_{\RRSch{G}{x}} : \kkSch{G}{x} \to \rkkSch{G}{x}$ of the special fibre. 
When normalized by a Tate twist in a suitable way, this sheaf is 
$$
\RESFAIS{F}{G}{x} \ceq {\nu_{\RRSch{G}{x}}}_!\ (\dim \nu_{\RRSch{G}{x}}/2)\ \NC{\RRSch{G}{x}} \fais{F},
$$
which plays the main role in this paper. (See Definition~\ref{definition: the functor}. 

After reviewing some preliminary notions in Section~\ref{section: basic notions},
in Section~\ref{section: restriction and conjugation and induction} we develop techniques (Theorems~\ref{theorem: restriction}, \ref{theorem: conjugation} and \ref{theorem: induction}) which allow us to calculate $\RESFAIS{F}{G}{x}$ as $x$ ranges over $I(\Ksch{G},\Kq)$ for many perverse sheaves $\fais{F}$ on $\KKsch{G}$. These Theorems essentially show that the functor $\RESFAIS{F}{G}{x}$ behaves nicely with respect to restriction (from one reductive quotient to another), induction (from parabolic subgroups) and also with respect to the action of $\Ksch{G}(\Kq)$ on the building.
The theorems also show that the functor $\RESFAIS{F}{G}{x}$ plays a role which is analogous to that of compact restriction of representations to finite reductive quotients of parahoric subgroups of $\Ksch{G}(\Kq)$. 

Suppose $x\in I(\Ksch{G},\Kq)$. Then $\rkkSch{G}{x}$ is defined over $\kq$ and admits a Frobenius automorphism, henceforth denoted by $\Frob_{\rkSch{G}{x}}$.
An equivariant perverse sheaf $\fais{F}$ on $\KKsch{G}$ is said to have \emph{depth zero} if, for each $x\in I(\Ksch{G},\Kq)$, there is an isomorphism 
\[
\Frob_{\rkSch{G}{x}}^* \RESFAIS{F}{G}{x} \iso \RESFAIS{F}{G}{x}
\]
in $D^b_c(\rkkSch{G}{x},\EE)$ (see Definition~\ref{definition: depth zero}).
If $\fais{F}$ is an equivariant perverse sheaf of depth zero then, for each $x\in I(\Ksch{G},\Kq)$ and for each isomorphism 
\[
\varphi_{\fais{F},x} : \Frob_{\rkSch{G}{x}}^*\  \RES{G}{x} \fais{F}\to \RES{G}{x} \fais{F},
\] 
we may use the Grothendieck-Lefschetz fixed-point formula to unambiguously define a function \cdef{$\CHF{F}{G}{x} : \rkkSch{G}{x}(\kq) \to \EE$} by
\[
\CHF{F}{G}{x}(h) \ceq \sum_{k\in \NN} (-1)^k \Trace\left((\varphi_{\fais{F},x})_h; H^k_h(
\RES{G}{x}\fais{F})\right).
\]
Thus, $\CHF{F}{G}{x}$ is the characteristic function of $\RES{G}{x}\fais{F}$ with respect to $\varphi_{\fais{F},x}$ (\cf Subsection~\ref{subsection: characteristic function}).

The class of equivariant perverse sheaves of depth zero which appear in this paper actually satisfy 
a strong property, which is encoded as compatibility relations among the isomorphisms 
$\Frob_{\rkSch{G}{x}}^* \RESFAIS{F}{G}{x} \iso \RESFAIS{F}{G}{x}$ for various $x\in I(\Ksch{G},\Kq)$.
Such compatibility relations form what we call a \emph{Frobenius structure} 
(see Definition~\ref{definition: Frobenius structure}).

Armed with these notions, we can begin to compare character sheaves of $\KKsch{G}$ with representations of $\Ksch{G}(\Kq)$. We say that an equivariant perverse sheaf $\fais{F}$ of depth zero on $\KKsch{G}$ with Frobenius structure $\varphi_\fais{F}$ \emph{matches} a virtual representation $\pi$ of $\Ksch{G}(\Kq)$ if
\[
\CHF{F}{G}{x} =  \Trace \cRes^{\Ksch{G}(\Kq)}_{\RSch{G}{x}(\Rq)}\pi,
\]
for each $x\in I(\Ksch{G},\Kq)$. In fact, under certain conditions on the group $\Ksch{G}$, every character sheaf $\fais{F}$ of $\KKsch{G}$ with Frobenius structure determines a distribution on the elliptic elements of $\Ksch{G}(\Kq)$ (explained in Subsection~\ref{subsection: SS}) and if $\fais{F}$ matches virtual representation $\pi$ then this distribution coincides with the character of $\pi$ on on the elliptic elements of $\Ksch{G}(\Kq)$ (Proposition~\ref{proposition: SS}).

Theorem~\ref{theorem: induction}, shows that if 
$\fais{F}$ is induced (in the sense explained Subsection~\ref{subsection: induction}) from a character sheaf $\fais{G}$ of an unramified maximal torus $\KKsch{T}$ defined over $\Kq$ and split over an unramified extension of $\Kq$, then we can determine $\RESFAIS{F}{G}{x}$ whenever $x\in I(\Ksch{G},\Kq)$ 
is a hyperspecial (poly)vertex in the image of $I(\Ksch{T},\Kq) \hookrightarrow I(\Ksch{G},\Kq)$. 
This result requires that we review some facts about perverse sheaves (principal fibrations, more properties of nearby cycles, and fibrations over a trait, in Subsection~\ref{subsection: fibration}), and character sheaves (parabolic induction of character sheaves and local systems, in Subsection~\ref{subsection: induction}), and also say a few words about integral models for flag varieties.

Then we consider the case $G=\GL(N)$ in order to produce examples of character sheaves and matching representations. 
The last theorem of this paper, Theorem~\ref{theorem: representations}, demonstrates how 
generalised principal series representations of $\GL(N,\Kq)$ of depth zero match with character sheaves of $\GL(N)_\KKq$ with Frobenius structure which are induced from maximal tori.
%
The key feature of  Theorem~\ref{theorem: representations} is the following. Let $\Ksch{T}$ be an unramified, maximal torus in $\GL(N)_\Kq$ and let $\fais{L}$ be a Kummer local system on $\KKsch{T}$ of depth zero. Let $\KKsch{B}\subset \KKsch{G}$ be a Borel subgroup with Levi component $\KKsch{T}$. Cohomological parabolic induction produces a perverse sheaf $\ind^{\KKsch{G}}_{\KKsch{B}}\fais{L}[N]$ which is a finite direct sum of character sheaves of $\KKsch{G}$. 
This is true regardless of whether or not $\Ksch{T}$ is split over $\Kq$! If $\Ksch{T}$ is elliptic over $\Kq$ then $\ind^{\KKsch{G}}_{\KKsch{B}}\fais{L}[N]$ matches a supercuspidal representation; otherwise, the perverse sheaf $\ind^{\KKsch{G}}_{\KKsch{B}}\fais{L}[N]$ matches a generalised principal series representation of $\Ksch{G}(\Kq)$. 

This property of character sheaves is the point of departure for forthcoming work on endoscopy and character sheaves.
\medskip

\centerline{* \ * \ * }

This paper could not have been written without the wealth of mathematics the first author learned over the years while working with Anne-Marie Aubert and he is happy to have this opportunity to acknowledge her role in this work. 
He also thanks Jiu-Kang Yu for laying much of the ground work for this paper with his work on smooth integral models for $p$-adic groups.
Finally, he thanks the Institut des Hautes \'Etudes Scientifique for wonderful hospitality while some of the ideas for this paper were developed, and Laurent Fargues and Christophe Breuil for helpful conversations there.
An early version of this material was presented at the Quebec-Vermont Number Theory Seminar at McGill University in March 2007; the main results in this paper was presented at the Dieuxi\`eme Congress Canada-France in the Automorphic Forms Session organised by Stephen Kudla and Colette Moeglin in June 2008.

\tableofcontents


\section{Preliminaries}\label{section: basic notions}

In this section we review some important properties of perverse sheaves and character sheaves and also set notation for the rest of the paper. This section may be skipped if the reader is willing to refer back when necessary.

Let \cdef{$\sch{S} = \{ {\eta}, {\bf s} \}$} be a Henselian trait with closed point ${\bf s}$, generic point $\eta$ and generic geometric point \cdef{${\bar \eta}$} defining \cdef{$\bar{\bf s}$} over ${\bf s}$. Let \cdef{$\sch{\bar S} = \{ {\bar \eta}, {\bar{\bf s}} \}$}. 
Then $\mathcal{O}_{\sch{\bar S}}({\bar \eta})$ is an algebraic closure of a non-Archimedean local field $\mathcal{O}_{\sch{S}}({\eta})$, and $\mathcal{O}_{\sch{\bar S}}({\bar{\bf s}})$ is an algebraic closure of the residue field $\mathcal{O}_{\sch{S}}({\bf s})$ (a finite field) of $\mathcal{O}_{\sch{S}}({\eta})$. It should be noted that we have not placed any restriction on the characteristic of $\mathcal{O}_{\sch{S}}({\eta})$. 

Let \cdef{$\ell$} be a prime invertible in $\mathcal{O}_{\sch{S}}({\bf s})$ (and therefore in $\mathcal{O}_{\sch{S}}(\sch{S})$) and henceforth fixed.

In this paper, we generally work with
schemes of finite type over $\sch{S}$ or $\sch{\bar S}$ or an algebraic varieties 
over ${\eta}$ or ${\bf s}$ or ${\bar \eta}$ or ${\bar{\bf s}}$. In each case we will make use of the `derived' category \cdef{$D^b_c(\sch{X},\EE)$} of cohomologically bounded constructible $\ell$-adic sheaf complexes, where $\ell$ is a prime number different from the characteristic of $\mathcal{O}_{\sch{S}}({s})$. This category was introduced in \cite[1.1.1-1.1.5]{D2}; see also \cite[2.2.9, 2.2.14, 2.2.18]{BBD} (\cf \cite[expos\'es VI, V, XV]{SGA5}).\footnote{In fact, the definition found there does not in general define a triangulated category, as mentioned in \cite[0.6]{Lau}; this problem is resolved in \cite{Ek}, in which it is shown that $D^b_c(\sch{X},\EE)$ is a triangulated category satisfying the yoga of the `six functors closest to Grothendieck's heart' and also admitting Hom and tensor products.}
As much as possible, we follow the notational conventions of \cite{BBD} (and therefore \cite{CS} and \cite{MS}) regarding derived functors.

\subsection{Base Change}\label{subsection: base change}

In this paper we use several versions of what is collectively known as base change. In this subsection we state these results. In each case, the proof is based on results from \cite{SGA4} (torsion sheaves) adapted to the category $D^b_c(\sch{X},\EE)$ by the adic formalism, such as found in \cite{Ek}.\footnote{In fact, this `adaptation' requires considerable effort, which we do not include here. In this regard we follow standard practice: ``On utilisera librement pour les $\EE$-faisceaux et leur cat\'egorie d\'eriv\'ee ... les th\'eor\`emes qui sont \'enonc\'es et \'etablis dans la litt\'erature pour les faisceaux constructibles de $\Lambda$-modules, $\Lambda$ un anneau fini d'ordre premier \`a $p$ ...'' \cite[0.5]{Lau}.}

Consider the following Cartesian diagram in the category of schemes:
\begin{equation}\label{diagram: Cartesian}
	\xymatrix{
	\sch{X}' \ar[r]^{\sch{g}'} \ar[d]_{\sch{f}'} & \sch{X} \ar[d]^{\sch{f}} \\
	\sch{Y}' \ar[r]_{\sch{g}} & \sch{Y}.
	}
\end{equation}
Let
\begin{equation}\label{equation: base change}
\xymatrix{
\sch{g}^*\ \sch{f}_* \ar[r] & \sch{f}'_*\ {\sch{g}'}^* 
}
\end{equation}
be the base change morphism, as a morphism of functors $D^b_c(\sch{X},\EE) \to D^b_c(\sch{Y}',\EE)$ (\cf \cite[XII, \S 4]{SGA4}).

\subsubsection{Proper Base Change}\label{item: PBC}
If $f$ is proper, then \eqref{equation: base change} is an isomorphism of functors. 
This follows from \cite[XII, 5.1(i)]{SGA4} and the adic formalism.

\subsubsection{Smooth Base Change}\label{item: SBC}
If $g$ is smooth, then \eqref{equation: base change} is an isomorphism of functors.
This follows from \cite[XVI]{SGA4} and the adic formalism.


We will also need base change as it pertains to compact supports (\cf \cite[XVII, 5]{SGA4}).
Referring again to Diagram~\ref{diagram: Cartesian}, consider the base change morphism,
\begin{equation}\label{equation: base change with compact supports}
\xymatrix{
\sch{g}^*\ \sch{f}_! \ar[r] & \sch{f}'_!\ {\sch{g}'}^* 
}
\end{equation}
as a morphism of functors $D^b_c(\sch{X},\EE) \to D^b_c(\sch{Y}',\EE)$.

\subsubsection{Proper Base Change with Compact Supports}\label{item: FBC}
If $\sch{f}$ is proper, then $\sch{f}_! = \sch{f}_*$, so \eqref{equation: base change with compact supports} is an isomorphism of functors by Subsection~\ref{item: PBC}.

\subsubsection{Base Change with Compact Supports}\label{item: BCC}

If $\sch{f}$ is locally of finite type and separated,
then \eqref{equation: base change with compact supports} is an isomorphism of functors.
This follows from \cite[XVII, Prop.6.1.4~(iii)]{SGA4} and the adic formalism.
(A closely related version of base change with compact supports appears in \cite[(1.7.5)]{CS}.)

\subsection{Nearby Cycles}\label{section: nearby}
 
Let \cdef{$\Sch{X}$} be a scheme of finite type over $\sch{S}$ and set \cdef{$\Sch{\bar X} = \Sch{X}\times_\sch{S} \sch{\bar S}$}. 
Consider the diagram
\[
\xymatrix{
\ar[d] \Sch{X}_{\bar \eta} \ar[r]^{j_{\Sch{\bar X}}} & \ar[d] \Sch{\bar X} & \ar[d] \ar[l]_{i_{\Sch{\bar X}}} \Sch{X}_{\bar{\bf s}} \\
{\bar \eta} \ar[r] & \bar{S} & \ar[l] {\bar{\bf s}}, 
}
\]
where \cdef{$j_\Sch{\bar X}$} and \cdef{$i_\Sch{\bar X}$} are obtained by pull-back (so the squares are Cartesian).
The {\it nearby cycles functor for $\Sch{\bar X}$} is defined by
\[
\NC{\Sch{\bar X}}\ceq {i_{\Sch{\bar X}}}^*\ {j_{\Sch{\bar X}}}_*.
\]
Recall that, as much as possible, we follow \cite{BBD} regarding notation for derived functors (although the definitive reference for nearby cycles is \cite{SGA7}, of course). 

We now record some basic properties of the nearby cycles functor which will be important later.

Since $\Sch{X}\to \sch{S}$ is of finite type, the functor $\NC{\Sch{\bar X}}$ preserves constructibility (\cf \cite[Th. finitude]{SGA4.5} and \cite[\S1.1]{Ill}). Thus, we may write
\begin{equation}\label{equation: NC bc}
\NC{\Sch{\bar X}} : D^b_c(\Sch{X}_{\bar \eta} , \EE) \to D^b_c(\Sch{X}_{\bar{\bf s}}, \EE).
\end{equation}

Next, suppose $\Sch{Y}\to S$ is also of finite type and let $\Sch{h} : \Sch{X}\to \Sch{Y}$ be a morphism over $\sch{S}$.  Consider the diagram
\[
		\xymatrix{
\Sch{X}_{\bar \eta} \ar[d]^{\Sch{h}_{\bar \eta}} \ar[r]^{j_{\Sch{\bar X}}} & \Sch{\bar X} \ar[d]^{\Sch{\bar h}} & \ar[l]_{i_{\Sch{\bar X}}} \ar[d]^{\Sch{h}_{\bar{\bf s}}} \Sch{X}_{\bar{\bf s}} \\
\Sch{Y}_{\bar \eta} \ar[r]^{j_{\Sch{\bar Y}}} & \Sch{\bar Y} & \ar[l]_{i_{\Sch{\bar Y}}} \Sch{Y}_{\bar{\bf s}} 
		}
\]
Again, each square is Cartesian.

\subsubsection{Proper Base Change and Nearby Cycles}\label{item: PNC_*}

If $\Sch{h} : \Sch{X} \to \Sch{Y}$ is proper then 
it follows (\cf \cite[XIII, 1.3.6]{SGA7})  from proper base change (Subsection~\ref{item: PBC}) that there is a canonical isomorphism of functors 
$\NC{\Sch{\bar Y}} \ {\Sch{h}_{\bar \eta}}_*  \to {\Sch{h}_{\bar{\bf s}}}_*\ \NC{\Sch{\bar X}}$.

\subsubsection{Smooth Base Change and Nearby Cycles}\label{item: SNC^*}

If $\Sch{h} : \Sch{X} \to \Sch{Y}$ is smooth then 
it follows (\cf  \cite[XIII, 1.3.7]{SGA7}) from smooth base change (Subsection~\ref{item: SBC}) that there is a canonical isomorphism of functors 
${\Sch{h}_{\bar{\bf s}}}^*\ \NC{\Sch{\bar Y}}  \to \NC{\Sch{\bar X}} \ {\Sch{h}_{\bar \eta}}^*$.

\subsubsection{Compact Supports and Nearby Cycles}\label{item: FNC_!}

If $\Sch{h} : \Sch{X} \to \Sch{Y}$ is proper then it follows (\cf  \cite[XIII, 1.3.8]{SGA7}) from proper base change (Subsection~\ref{item: FBC}) that there is a canonical isomorphism of functors 
$\NC{\Sch{\bar Y}} \ {\Sch{h}_{\bar \eta}}_!  \to {\Sch{h}_{\bar{\bf s}}}_!\ \NC{\Sch{\bar X}}$.
In fact, this is easy to see directly from proper base change: if $\Sch{h} : \Sch{X}\to \Sch{Y}$ is proper then $\Sch{h}_!= \Sch{h}_*$, in which case $\NC{\Sch{\bar Y}} \ {\Sch{h}_{\bar \eta}}_!  \iso {\Sch{h}_{\bar{\bf s}}}_!\ \NC{\Sch{\bar X}}$ by proper base change (\cf Subsection~\ref{item: PNC_*}).

\subsection{Perverse Sheaves}

\subsubsection{Equivariant Perverse Sheaves}\label{subsection: equivariant perverse sheaves}

In this subsection we work in the category of algebraic varieties over $\mathcal{O}_{\sch{\bar S}}({\bar \eta})$ or $\mathcal{O}_{\sch{\bar S}}({\bar{\bf s}})$. Following \cite{BBD}, we denote the category of perverse sheaves on $\sch{X}$ by \cdef{$\catM\sch{X}$}.

Let \cdef{$\sch{m} : \sch{H}\times \sch{X} \to \sch{X}$} be an action of a {connected} algebraic group \cdef{$\sch{H}$} on $\sch{X}$. Recall from \cite[\S0]{L0} that $\fais{F}\in \catM\sch{X}$ is an \cdef{equivariant perverse sheaf on $\sch{X}$} if there is an isomorphism 
	\begin{equation}\label{equation: equivariant}
	\cdef{\mu_{\fais{F}} : \sch{m}^* \fais{F} \to \proj^* \fais{F}}
	\end{equation}
in $D^b_c(\sch{H}\times\sch{X}, \EE)$ such that $\sch{e}^*\mu_\fais{F} = \id_\fais{F}$, where  \cdef{$\sch{e} : \sch{X} \to \sch{H}\times\sch{X}$} is defined by $x \mapsto (1,x)$ and \cdef{$\proj : \sch{H}\times \sch{X} \to \sch{X}$} is projection onto the second component. As observed in \cite[\S0]{L0}, if $\fais{F}$ is an equivariant perverse sheaf, then $\mu_\fais{F}$ is essentially unique. 

A morphism $\phi : \fais{F}_1\to \fais{F}_2$ perverse sheaves on $\sch{X}$ is an \cdef{equivariant morphism of perverse sheaves} if the following diagram commutes.
\[
\xymatrix{
	\ar[d]_{\mu_{\fais{F}_1}} \sch{m}^*\fais{F}_1 \ar[r]^{\sch{m}^*\phi} & \ar[d]^{\mu_{\fais{F}_2}} \sch{m}^*\fais{F}_2\\
	\proj^*\fais{F}_1 \ar[r]^{\proj^*\phi} & \proj^*\fais{F}
}
\]
Note that this definition makes implicit use of the essential uniqueness of the isomorphisms $\sch{m}^* \fais{F}_1 \to \proj^* \fais{F}_1$ and $\sch{m}^* \fais{F}_2 \to \proj^* \fais{F}_2$ as above. Since $\id_\fais{F}$ is equivariant if $\fais{F}$ is equivariant and since the composition of equivariant morphisms is equivariant, it follows that $\sch{H}$-equivariant perverse sheaves on $X$ form a category, with morphisms as above; this category is denoted by \cdef{$\catM_\sch{H}\sch{X}$}.

We finish this subsection with a comment that will be used in Subsection~\ref{subsection: conjugation}.
For each $h\in \sch{H}$, let $\sch{e}_h : \sch{X} \to \sch{H} \times \sch{X}$ be the morphism determined by $\sch{e}_h(x) = (h,x)$; note that $\proj \circ \sch{e}_{h^{-1}} = \id$. Define
	\begin{equation}\label{equation: equivariance}
	\cdef{\mu_{\fais{F}}(h) = {\sch{e}_{h^{-1}}}^*\ \mu_{\fais{F}}}.
	\end{equation}
Define \cdef{$\sch{m}(h^{-1}) : \sch{H}\to \sch{H}$} by $\sch{m}(h^{-1}) \ceq \sch{m}\circ \sch{e}_{h^{-1}}$. Then \eqref{equation: equivariance} defines a family of isomorphisms
	\begin{equation}\label{equation: iii reductive}
	\forall h\in \sch{H},\qquad \mu_{\fais{F}}(h): \sch{m}(h^{-1})^*\ \fais{F} \to \fais{F}.
	\end{equation}

\subsubsection{Parabolic Restriction}\label{subsection: restriction}

In this subsection we work in the category of algebraic varieties over $\mathcal{O}_{\sch{\bar S}}({\bar \eta})$ or $\mathcal{O}_{\sch{\bar S}}({\bar{\bf s}})$.

Let \cdef{$\iota : \sch{P}\to \sch{G}$} be a parabolic subgroup and let \cdef{$\pi_P : \sch{P}\to \sch{L}$} be the quotient map to the Levi component of $\sch{P}$. 
\begin{equation}
\xymatrix{
\sch{G} & \ar[l]_{\iota} \sch{P} \ar[r]^{\pi_{P}} & \sch{L}
}
\end{equation}
The functor \cdef{$\res^\sch{G}_\sch{P} : D^b_c(\sch{G},\EE) \to D^b_c(\sch{L},\EE)$} is defined by 
\begin{equation}\label{equation: restriction}
\res^\sch{G}_\sch{P} \ceq  {\pi_P}_!\ {\iota}^\ast
\end{equation}
when working in the category of algebraic varieties over $\mathcal{O}_{\sch{\bar S}}({\bar \eta})$, and by
\begin{equation}\label{equation: restriction finite}
\res^\sch{G}_\sch{P} \ceq  {\pi_P}_!\, (\dim\pi_P)\ {\iota}^\ast
\end{equation}
when working in the category of algebraic varieties over $\mathcal{O}_{\sch{\bar S}}({\bar{\bf s}})$, where $ (\dim\pi_P)$ indicates Tate twist by  $\dim\pi_P$.




\subsubsection{Principal Fibrations}\label{subsection: fibration}

In this subsection we review a fundamental result concerning equivariant perverse sheaves. 
This result, Proposition~\ref{proposition: fibration}, is stated in \cite[1.9.3]{CS}, \cite[p.65]{BBD} and in \cite[1.4.2]{MS}; in each case a proof is sketched. 

In this subsection we work in the category of algebraic varieties over $\mathcal{O}_{\sch{\bar S}}({\bar \eta})$ or $\mathcal{O}_{\sch{\bar S}}({\bar{\bf s}})$ (where $S$ is a Henselian trait, see Section~\ref{section: basic notions}).

\begin{proposition}[Beilinson-Bernstein-Deligne]\label{proposition: fibration}
Let $\sch{f} : \sch{X} \to \sch{Y}$ be a principal fibration with group $\sch{H}$. Suppose $\sch{H}$ is connected. 
If $\fais{F}$ is a perverse sheaf on $\sch{X}$, then $\fais{F}$ is $\sch{H}$-equivariant if and only if $\fais{F} \iso \sch{f}^*[\dim\sch{H}] \mathcal{G}$ for some perverse sheaf $\mathcal{G}$ on $\sch{Y}$.
\end{proposition}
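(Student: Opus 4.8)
The plan is to prove both implications separately, with the ``only if'' direction being the substantive one. For the easy direction, suppose $\fais{G}$ is a perverse sheaf on $\sch{Y}$ and $\fais{F} \iso \sch{f}^*[\dim\sch{H}]\fais{G}$. Since $\sch{f}$ is a principal $\sch{H}$-fibration, the two maps $\sch{m}, \proj : \sch{H}\times\sch{X} \to \sch{X}$ become equal after composing with $\sch{f}$ (because $\sch{f}$ is $\sch{H}$-invariant: $\sch{f}\circ\sch{m} = \sch{f}\circ\proj$). Hence $\sch{m}^*\sch{f}^*\fais{G} = \proj^*\sch{f}^*\fais{G}$ canonically, and shifting by $[\dim\sch{H}]$ gives an isomorphism $\mu_\fais{F} : \sch{m}^*\fais{F}\to \proj^*\fais{F}$; restricting along $\sch{e}$ recovers the identity since $\sch{f}\circ\sch{m}\circ\sch{e} = \sch{f}\circ\proj\circ\sch{e} = \sch{f}$ as the identity on $\sch{X}$. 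Transporting through the given isomorphism $\fais{F}\iso\sch{f}^*[\dim\sch{H}]\fais{G}$ shows $\fais{F}$ is $\sch{H}$-equivariant.

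For the ``only if'' direction, the key point is descent along the smooth surjection $\sch{f}$. I would first observe that a principal $\sch{H}$-fibration is faithfully flat and smooth of relative dimension $\dim\sch{H}$, so the functor $\sch{f}^*[\dim\sch{H}]$ is exact for the perverse $t$-structure and fully faithful on perverse sheaves (this is the standard fact that $\sch{f}^*$ shifted by the relative dimension of a smooth surjective morphism is $t$-exact and that $\sch{f}^*$ has no kernel on perverse sheaves since $\sch{f}$ is surjective). The content is then to show essential surjectivity onto the $\sch{H}$-equivariant objects, which is effectivity of descent. Given an $\sch{H}$-equivariant $\fais{F}$ with its (essentially unique) isomorphism $\mu_\fais{F} : \sch{m}^*\fais{F}\to\proj^*\fais{F}$, the descent datum for the covering $\sch{f}$ — whose associated groupoid is exactly $\sch{H}\times\sch{X} \rightrightarrows \sch{X}$ via $(\sch{m},\proj)$ — is precisely $\mu_\fais{F}$, and the cocycle condition on the triple product $\sch{H}\times\sch{H}\times\sch{X}$ follows from the essential uniqueness of equivariance isomorphisms (any two candidates for the composite agree after restriction along $\sch{e}\times\sch{e}$, and uniqueness propagates). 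One then invokes effectivity of smooth descent for objects of $D^b_c$ together with the fact that the perverse $t$-structure descends (so the descended object is again perverse), producing $\fais{G}\in\catM\sch{Y}$ with $\sch{f}^*[\dim\sch{H}]\fais{G}\iso\fais{F}$.

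The main obstacle is the effectivity of descent in the derived/constructible setting: descent for $D^b_c(-,\EE)$ along a smooth surjection is not completely formal because $D^b_c$ is not the derived category of an abelian category in an obvious way, and one must be careful that the descent datum $\mu_\fais{F}$ — which a priori lives in $D^b_c(\sch{H}\times\sch{X},\EE)$ and need only satisfy the cocycle condition up to the subtleties of higher coherences — actually glues. The way around this, which is the route sketched in \cite{BBD} and \cite{CS}, is to reduce to the case where $\sch{f}$ is Zariski-locally trivial, or more robustly, to use that for a \emph{perverse} sheaf the relevant $\Hom$ and $\Ext^{<0}$ groups governing the obstruction and uniqueness of gluing vanish, so the cocycle condition at the level of cohomology objects suffices; concretely, one checks the statement after pulling back to an étale or Zariski cover trivializing the $\sch{H}$-torsor, where $\sch{X}\cong\sch{H}\times\sch{Y}$ and the claim is immediate from the Künneth-type computation for perverse sheaves on a product with a connected group, and then patches using the no-higher-cohomology property of perverse sheaves. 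I would also remark that connectedness of $\sch{H}$ is used twice: once to guarantee the essential uniqueness of $\mu_\fais{F}$ (via \cite{L0}), and once to ensure $\sch{H}$-equivariance is a property rather than extra structure, which is what makes the descended $\fais{G}$ canonical.
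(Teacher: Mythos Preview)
The paper does not actually supply a proof of this proposition: it states the result with attribution to Beilinson--Bernstein--Deligne and simply refers the reader to the sketches in \cite[p.~65]{BBD}, \cite[1.9.3]{CS}, and \cite[1.4.2]{MS}. Your proposal is a correct fleshing-out of precisely that standard descent argument --- $t$-exactness and full faithfulness of $\sch{f}^*[\dim\sch{H}]$ for a smooth surjection (which is \cite[4.2.5]{BBD}, invoked explicitly in the paper's proof of the subsequent corollary), identification of the equivariance isomorphism $\mu_\fais{F}$ with a descent datum, and effectivity via the vanishing of negative $\operatorname{Ext}$'s for perverse sheaves --- so there is nothing to contrast.
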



\begin{corollary}\label{corollary: fibration}
Let $\sch{f} : \sch{X} \to \sch{Y}$ be a principal fibration with group $\sch{H}$. Suppose $\sch{H}$ is connected. Then $\sch{f}^*[\dim\sch{H}] : \catM\sch{Y} \to \catM_\sch{H}\sch{X}$ is an equivalence of categories and $\catM_\sch{H}\sch{X}$ is a thick subcategory of $\catM\sch{X}$.
\end{corollary}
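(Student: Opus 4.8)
The plan is to deduce everything from Proposition~\ref{proposition: fibration}, which we take as given. Write $F \ceq \sch{f}^*[\dim\sch{H}] : \catM\sch{Y} \to \catM\sch{X}$; the first thing to check is that $F$ lands in $\catM_\sch{H}\sch{X}$. Since $\sch{f}$ is a principal fibration with structure group $\sch{H}$ and $\sch{H}$ is connected, for any $\mathcal{G}\in\catM\sch{Y}$ the complex $\sch{f}^*[\dim\sch{H}]\mathcal{G}$ is perverse (shift of a pullback along a smooth morphism of relative dimension $\dim\sch{H}$) and is $\sch{H}$-equivariant: the action map $\sch{m}$ and the projection $\proj : \sch{H}\times\sch{X}\to\sch{X}$ both become, after composing with $\sch{f}$, equal to $\sch{f}\circ\proj$ (because $\sch{f}$ is $\sch{H}$-invariant), so $\sch{m}^*\sch{f}^*\mathcal{G} \iso \proj^*\sch{f}^*\mathcal{G}$ canonically, and one checks the normalization $\sch{e}^*\mu = \id$. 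Hence $F$ factors through $\catM_\sch{H}\sch{X}$, giving a functor $F : \catM\sch{Y}\to\catM_\sch{H}\sch{X}$.

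Next I would establish full faithfulness. Because $\sch{f}$ is smooth surjective with connected fibres (a principal $\sch{H}$-bundle, $\sch{H}$ connected), the unit $\mathcal{G}\to \sch{f}_*\sch{f}^*\mathcal{G}$ is an isomorphism on the relevant truncations — more precisely, $\sch{f}^*[\dim\sch{H}]$ is fully faithful on perverse sheaves because for a smooth morphism with connected fibres the functor $\sch{f}^*$ reflects isomorphisms and $\Hom_{\catM\sch{X}}(\sch{f}^*[\dim\sch{H}]\mathcal{G}_1, \sch{f}^*[\dim\sch{H}]\mathcal{G}_2) \iso \Hom_{\catM\sch{Y}}(\mathcal{G}_1,\mathcal{G}_2)$ via adjunction and the projection formula applied to $\sch{f}_*\sch{f}^*\EE_\sch{Y}$, whose relevant perverse cohomology in degree $0$ is just $\EE_\sch{Y}$ since the fibres are connected. (This is the step most likely to need care: controlling $R\sch{f}_*\sch{f}^*$ and checking that no higher cohomology of the fibre contributes in perverse degree zero; here one uses that $\sch{H}$-bundles are locally trivial in the smooth topology and that $\sch{H}$ connected forces $H^0$ of the fibre to be one-dimensional, so the adjunction $(\sch{f}^*[\dim\sch H], \sch{f}_*[-\dim\sch H])$ restricted to perverse sheaves is a fully faithful pair.) Essential surjectivity onto $\catM_\sch{H}\sch{X}$ is exactly the content of Proposition~\ref{proposition: fibration}: any $\sch{H}$-equivariant perverse sheaf on $\sch{X}$ is of the form $\sch{f}^*[\dim\sch{H}]\mathcal{G}$. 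Combining, $F$ is an equivalence of categories.

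Finally, for the thickness claim: a full subcategory of an abelian category is \emph{thick} (closed under subobjects, quotients and extensions) — I would verify this directly. Given $\fais{F}\in\catM_\sch{H}\sch{X}$ and a subobject or quotient $\fais{F}'$ in $\catM\sch{X}$, pull back: $\sch{f}^*[\dim\sch{H}]$ is exact (it is $t$-exact, being the shift of pullback along a smooth morphism of pure relative dimension $\dim\sch H$), and by the equivalence $\fais{F} = \sch{f}^*[\dim\sch{H}]\mathcal{G}$; since $\sch{f}^*[\dim\sch{H}]$ is an equivalence $\catM\sch{Y}\to\catM_\sch{H}\sch{X}$ it reflects the abelian structure, but to see $\fais{F}'$ itself is equivariant one argues that $\fais{F}'$ is the image/kernel of an equivariant morphism: the inclusion or projection $\fais{F}'\to\fais{F}$ or $\fais{F}\to\fais{F}'$ is automatically equivariant because, by essential uniqueness of the equivariance isomorphism (recorded in Subsection~\ref{subsection: equivariant perverse sheaves}) together with semisimplicity considerations, any morphism between objects admitting equivariant structures is equivariant — alternatively, one checks the diagram in the definition of equivariant morphism commutes by restricting along $\sch{e}$ and using that $\catM\sch{X}$ has no nonzero maps shifting perverse degree. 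For extensions: if $0\to\fais{F}_1\to\fais{F}\to\fais{F}_2\to 0$ with $\fais{F}_1,\fais{F}_2$ equivariant, apply $\sch{m}^*$ and $\proj^*$ (both exact) to get two short exact sequences and the isomorphism $\mu_{\fais{F}_1},\mu_{\fais{F}_2}$ on the ends; since $\Hom(\proj^*\fais{F}_1, \proj^*\fais{F}_2[-1]) = 0$ the middle isomorphism $\mu_\fais{F}$ exists, making $\fais{F}$ equivariant. The main obstacle throughout is the careful handling of the adjunction and $R\sch{f}_*$ in the full-faithfulness step; everything else is formal, following \cite[1.9.3]{CS} and \cite[p.65]{BBD}.
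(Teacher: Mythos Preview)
Your overall architecture matches the paper's: show $\sch{f}^*[\dim\sch{H}]$ lands in $\catM_\sch{H}\sch{X}$, establish full faithfulness, invoke Proposition~\ref{proposition: fibration} for essential surjectivity, then handle thickness. The difference is economy. The paper does not reprove full faithfulness or thickness from scratch; it simply cites \cite[Prop.~4.2.5]{BBD} (full faithfulness of $\sch{f}^*[\dim\sch{H}]$ for any smooth morphism with geometrically connected fibres) and \cite[4.2.6]{BBD} (the essential image is a thick subcategory). Your adjunction sketch for full faithfulness is essentially the content of \cite[4.2.5]{BBD}, so nothing is gained by redoing it, and you correctly flag it as the delicate step.

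Your thickness argument, however, has genuine gaps. For subobjects and quotients you write that ``any morphism between objects admitting equivariant structures is equivariant'', but at that point $\fais{F}'$ is \emph{not yet known} to carry an equivariant structure, so the statement does not apply; the ``alternatively'' clause is too vague to repair this. For extensions, the vanishing you invoke, $\Hom(\proj^*\fais{F}_1,\proj^*\fais{F}_2[-1])=0$, is not the relevant obstruction: given isomorphisms on the ends of two short exact sequences, existence of a compatible middle isomorphism requires the two extension classes in $\operatorname{Ext}^1(\proj^*\fais{F}_2,\proj^*\fais{F}_1)$ to agree, which is a different (and nontrivial) condition. The correct route---and the one in \cite[4.2.6]{BBD}---is descent: working smooth-locally where the fibration trivialises, one identifies the essential image with perverse sheaves pulled back from $\sch{Y}$ and checks directly that this class is stable under subquotients and extensions. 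I would recommend replacing your direct verification with the citation, as the paper does.
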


\begin{proof}
By \cite[Prop~4.2.5]{BBD} we know that $\sch{f}^*[\dim\sch{H}] : \catM\sch{Y} \to \catM\sch{X}$ is full and faithful. The proof of Proposition~\ref{proposition: fibration} shows that $\sch{f}^*[\dim\sch{H}] \mathcal{G}$ is an equivariant perverse sheaf on $\sch{X}$ for each perverse sheaf $\mathcal{G}$ on $\sch{Y}$ and that $\sch{f}^*[\dim\sch{H}] \phi$ is an equivariant morphism in $\catM\sch{X}$ for each morphism $\phi$ in $\catM\sch{Y}$. Thus, $\sch{f}^*[\dim\sch{H}]$ is a full and 
faithful functor from $\catM\sch{Y}$ to $\catM_\sch{H}\sch{X}$. 
Proposition~\ref{proposition: fibration} tells us that this functor is essentially surjective. Thus, $\sch{f}^*[\dim\sch{H}]$ is an equivalence. The last clause of Corollary~\ref{corollary: fibration} follows from \cite[4.2.6]{BBD}.
\end{proof}

Let $\sch{f} : \sch{X} \to \sch{Y}$ be a principal fibration with group $\sch{H}$. Suppose $\sch{H}$ is connected. We write 
\begin{equation}
\cdef{\sch{f}_\# : \catM_\sch{H}\sch{X} \to \catM\sch{Y}}
\end{equation}
for the (essentially unique) inverse of the equivalence $\sch{f}^*[\dim\sch{H}] : \catM\sch{Y} \to \catM_\sch{H}\sch{X}$ (\cf Corollary~\ref{corollary: fibration}).

%
%

\subsubsection{Nearby Cycles and Principal Fibrations}

Let \cdef{$\Sch{X}$}, \cdef{$\Sch{Y}$} and \cdef{$\Sch{H}$} be schemes of finite type over the Henselian trait $\sch{S}$. 

With a bit of work, it follows from \cite[\S4.4]{BBD} that $\NC{\Sch{\bar X}}$ takes perverse sheaves on $\Sch{X}_{\bar \eta}$ to perverse sheaves on $\Sch{X}_{\bar{\bf s}}$.
Moreover, if $\Sch{H}\times \Sch{X} \to \Sch{X}$ is a smooth action of a connected group over $\sch{S}$, then $\NC{\Sch{\bar X}}$ takes $\Sch{H}_{\bar \eta}$-equivariant perverse sheaves on $\Sch{X}_{\bar \eta}$ to $\Sch{H}_{\bar{\bf s}}$-equivariant perverse sheaves on $\Sch{X}_{\bar{\bf s}}$; thus, we may write
\begin{equation}\label{equation: EPS}
	\NC{\Sch{\bar X}} : \catM_{\Sch{H}_{\bar \eta}}\Sch{X}_{\bar \eta} \to \catM_{\Sch{H}_{\bar{\bf s}}}\Sch{X}_{\bar{\bf s}}.
\end{equation}
(Compare with \eqref{equation: NC bc}.)

\subsubsection{Fibrations over a Trait}\label{subsection: NCF}

As above, let $\Sch{X}$, $\Sch{Y}$ and $\Sch{H}$ be schemes of finite type over the Henselian trait $\sch{S}$. The following proposition is used in the proof of Theorem \ref{theorem: induction}.

\begin{proposition}\label{proposition: NCe}
Let $\Sch{a} : \Sch{X}\to \Sch{Y}$ be a smooth morphism of schemes over $\sch{S}$ 
such that its generic and special fibres are smooth principal fibrations with 
connected groups
$\Sch{H}_{\bar \eta}$ and $\dim\Sch{H}_{\bar{\bf s}}$ such that
$\dim \Sch{H}_{\bar \eta}=\dim\Sch{H}_{\bar{\bf s}}$. Then
\[
\NC{\Sch{\bar Y}}\ (\Sch{a}_{\bar \eta})_\# \ \fais{F} \iso (\Sch{a}_{\bar{\bf s}})_\#\ \NC{\Sch{\bar X}}\ \fais{F}
\]
for all $\fais{F} \in \obj \catM_{\Sch{H}_{\bar \eta}} \Sch{X}_{\bar \eta}$.
\end{proposition}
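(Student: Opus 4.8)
The plan is to reduce the statement to the already-established compatibilities of $\NC{}$ with $*$-pullback along smooth maps (Subsection~\ref{item: SNC^*}) together with the defining property of $\sch{f}_\#$ from Subsection~\ref{subsection: fibration}, namely that $(\Sch{a}_{\bar\eta})^*[\dim\Sch{H}_{\bar\eta}]$ and $(\Sch{a}_{\bar{\bf s}})^*[\dim\Sch{H}_{\bar{\bf s}}]$ are quasi-inverses to $(\Sch{a}_{\bar\eta})_\#$ and $(\Sch{a}_{\bar{\bf s}})_\#$ respectively. First I would fix $\fais{F} \in \obj\catM_{\Sch{H}_{\bar\eta}}\Sch{X}_{\bar\eta}$ and set $\mathcal{G} \ceq (\Sch{a}_{\bar\eta})_\#\fais{F} \in \obj\catM\Sch{Y}_{\bar\eta}$, so that by Corollary~\ref{corollary: fibration} there is a canonical isomorphism $\fais{F} \iso (\Sch{a}_{\bar\eta})^*[\dim\Sch{H}_{\bar\eta}]\mathcal{G}$. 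Applying $\NC{\Sch{\bar X}}$ to both sides and using Subsection~\ref{item: SNC^*} for the smooth morphism $\Sch{a}$ (note $(\Sch{a}_{\bar{\bf s}})^* \NC{\Sch{\bar Y}} \iso \NC{\Sch{\bar X}} (\Sch{a}_{\bar\eta})^*$ commutes with the shift $[\dim\Sch{H}_{\bar\eta}] = [\dim\Sch{H}_{\bar{\bf s}}]$, where the hypothesis $\dim\Sch{H}_{\bar\eta} = \dim\Sch{H}_{\bar{\bf s}}$ is exactly what makes the shifts match), one obtains
\[
\NC{\Sch{\bar X}}\fais{F} \iso \NC{\Sch{\bar X}}\,(\Sch{a}_{\bar\eta})^*[\dim\Sch{H}_{\bar\eta}]\mathcal{G} \iso (\Sch{a}_{\bar{\bf s}})^*[\dim\Sch{H}_{\bar{\bf s}}]\ \NC{\Sch{\bar Y}}\mathcal{G}.
\]

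Now I would invoke that $\NC{\Sch{\bar Y}}$ sends $\catM\Sch{Y}_{\bar\eta}$ to $\catM\Sch{Y}_{\bar{\bf s}}$ (the perversity-preservation noted just before \eqref{equation: EPS}), so $\NC{\Sch{\bar Y}}\mathcal{G}$ is a genuine perverse sheaf on $\Sch{Y}_{\bar{\bf s}}$; hence the displayed isomorphism exhibits $\NC{\Sch{\bar X}}\fais{F}$ as $(\Sch{a}_{\bar{\bf s}})^*[\dim\Sch{H}_{\bar{\bf s}}]$ applied to a perverse sheaf. Applying the equivalence $(\Sch{a}_{\bar{\bf s}})_\#$ — equivalently, using that $(\Sch{a}_{\bar{\bf s}})_\#$ is the inverse of $(\Sch{a}_{\bar{\bf s}})^*[\dim\Sch{H}_{\bar{\bf s}}]$ — yields
\[
(\Sch{a}_{\bar{\bf s}})_\#\ \NC{\Sch{\bar X}}\fais{F} \iso \NC{\Sch{\bar Y}}\mathcal{G} = \NC{\Sch{\bar Y}}\ (\Sch{a}_{\bar\eta})_\#\fais{F},
\]
which is the assertion. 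I should also remark that by \eqref{equation: EPS} the sheaf $\NC{\Sch{\bar X}}\fais{F}$ is $\Sch{H}_{\bar{\bf s}}$-equivariant, so it lies in the domain $\catM_{\Sch{H}_{\bar{\bf s}}}\Sch{X}_{\bar{\bf s}}$ on which $(\Sch{a}_{\bar{\bf s}})_\#$ is defined — this is needed for the last step to make sense.

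The main obstacle is not the formal chain above but the verification that $\NC{\Sch{\bar X}}$ genuinely carries equivariant perverse sheaves to equivariant perverse sheaves with the shifts behaving as claimed, i.e. the content behind \eqref{equation: EPS} and the perversity-preservation of $\NC{}$ on a fibration: concretely, one must check that the equivariance datum $\mu_{\fais{F}}$ is transported by $\NC{}$ via the smooth-base-change isomorphism of Subsection~\ref{item: SNC^*} applied to the action map $\Sch{m}: \Sch{H}\times\Sch{X}\to\Sch{X}$ and the projection, and that the resulting isomorphism still satisfies the cocycle/unit normalization $\sch{e}^*\mu = \id$. This is the ``bit of work'' alluded to in the text preceding \eqref{equation: EPS}; granting it, the proposition follows purely formally. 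A secondary point to be careful about is that the smooth-base-change comparison of Subsection~\ref{item: SNC^*} is an isomorphism of \emph{functors}, so one should check that the particular isomorphism $\fais{F}\iso(\Sch{a}_{\bar\eta})^*[\dim\Sch{H}_{\bar\eta}]\mathcal{G}$ coming from Corollary~\ref{corollary: fibration} is compatible with it — but since all these isomorphisms are the canonical ones supplied by \cite{BBD}, naturality handles this with no genuine difficulty.
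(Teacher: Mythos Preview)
Your proof is correct and follows essentially the same route as the paper's own argument: set $\mathcal{G}=(\Sch{a}_{\bar\eta})_\#\fais{F}$, use smooth base change for nearby cycles (Subsection~\ref{item: SNC^*}) on $\Sch{a}$ together with the dimension hypothesis $\dim\Sch{H}_{\bar\eta}=\dim\Sch{H}_{\bar{\bf s}}$ to match shifts, and then apply the inverse equivalence $(\Sch{a}_{\bar{\bf s}})_\#$. Your additional remarks about perversity preservation of $\NC{}$ and the equivariance of $\NC{\Sch{\bar X}}\fais{F}$ via \eqref{equation: EPS} are points the paper simply takes as read, so you have been, if anything, slightly more careful than the original.
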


\begin{proof}
From the assumptions it follows that the maps $(\Sch{a}_{\bar \eta})_\# : \catM_{\Sch{H}_{\bar \eta}} \Sch{X}_{\bar \eta} \to \catM\Sch{Y}_{\bar \eta}$ and $(\Sch{a}_{\bar{\bf s}})_\# : \catM_{\Sch{H}_{\bar{\bf s}}} \Sch{X}_{\bar{\bf s}} \to \catM\Sch{Y}_{\bar{\bf s}}$ are well-defined (\cf Subsection~\ref{subsection: fibration}).  
Suppose $\fais{F} \in \obj \catM_{\Sch{H}_{\bar \eta}} \Sch{X}_{\bar \eta}$ and let $\fais{G} = (\Sch{a}_{\bar \eta})_\# \ \fais{F}$.  So $\fais{G}\in \obj\catM\Sch{Y}_{\bar \eta}$ and $\fais{F} = {\Sch{a}_{\bar \eta}}^*\ [\dim\Sch{H}_{\bar \eta}]$ (\cf Subsection~\ref{subsection: fibration}). Since $\Sch{a}$ is smooth by hypothesis, Subsection~\ref{item: SNC^*} provides an isomorphism of sheaves
\[
{\Sch{a}_{\bar{\bf s}}}^*\ \NC{\Sch{\bar Y}} \fais{G} \iso  \NC{\Sch{\bar X}}\ {\Sch{a}_{\bar \eta}}^*\ \fais{G}.
\]
Since $\dim \Sch{H}_{\bar \eta} = \dim \Sch{H}_{\bar{\bf s}}$, it follows that
\[
{\Sch{a}_{\bar{\bf s}}}^*\ [\Sch{H}_{\bar{\bf s}}]\ \NC{\Sch{\bar Y}} \fais{G} \iso  \NC{\Sch{\bar X}}\ {\Sch{a}_{\bar \eta}}^*\ [\Sch{H}_{\bar \eta}] \fais{G}.
\]
Since the inverse of ${\Sch{a}_{\bar{\bf s}}}^*\ [\Sch{H}_{\bar{\bf s}}]$ is $(\Sch{a}_{\bar{\bf s}})_\#$ (as defined in Subsection~\ref{subsection: fibration}), we have
\[
\NC{\Sch{\bar Y}}  \fais{G} \iso  (\Sch{a}_{\bar{\bf s}})_\#\ \NC{\Sch{\bar X}}\ {\Sch{a}_{\bar \eta}}^*\ [\Sch{H}_{\bar \eta}] \fais{G};
\]
likewise, since the inverse of ${\Sch{a}_{\bar{\eta}}}^*\ [\Sch{H}_{\bar{\eta}}]$ is $(\Sch{a}_{\bar{\eta}})_\#$,
\[
\NC{\Sch{\bar Y}} \ (\Sch{a}_{\bar \eta})_\# \ \fais{F} \iso  (\Sch{a}_{\bar{\bf s}})_\#\ \NC{\Sch{\bar X}}\fais{F},
\]
as desired.
\end{proof}

\subsection{Character Sheaves}

\subsubsection{Parabolic Induction of Character Sheaves}\label{subsection: induction}

In this subsection we review the definition of parabolic induction of character sheaves introduced in \cite[\S4]{CS} (see also \cite[\S7.1.1]{MS})
Notation here is the same as employed in Subsection~\ref{subsection: restriction}. In particular, for the moment we work in the category of algebraic varieties over $\mathcal{O}_{\sch{\bar S}}({\bar \eta})$ or $\mathcal{O}_{\sch{\bar S}}({\bar{\bf s}})$.

Consider the varieties
\begin{equation}\label{equation: induction varieties}
\begin{aligned}
		&\cdef{\sch{X}_\sch{P}\ceq \left\{ (g,h)\in \sch{G}\times \sch{G} \tq h^{-1}g h \in \sch{P}\right\}} \\
		&\cdef{\sch{Y}_\sch{P}\ceq \left\{ (g,h\sch{P})\in \sch{G}\times (\sch{G}/\sch{P}) \tq h^{-1}g h \in \sch{P}\right\}}
\end{aligned}
\end{equation}
and the diagram
	\begin{equation}
		\xymatrix{
		\sch{L} & \ar[l]_{\alpha_\sch{P}} \sch{X}_\sch{P} \ar[r]^{\beta_\sch{P}} & \sch{Y}_\sch{P} \ar[r]^{\gamma_\sch{P}} & \sch{G}
		}
	\end{equation}
in which $\cdef{\alpha_\sch{P}: \sch{X}_\sch{P} \to \sch{L}}$ is defined by $\alpha_\sch{P}(g,h) \ceq \pi_P(h^{-1}g h)$, $\cdef{\beta_\sch{P} : \sch{X}_\sch{P} \to \sch{Y}_\sch{P}}$ is defined by $\beta_\sch{P}(g,h) \ceq (g,h\sch{P})$ and $\cdef{\gamma_\sch{P} : \sch{Y}_\sch{P} \to \sch{G}}$ is defined by $\gamma_\sch{P}(g,h\sch{P}) \ceq g$. Then $\alpha_\sch{P}$ is $\sch{P}$-equivariant for the action of $\sch{P}$ on $\sch{L}$ given by $p\cdot l \mapsto \pi_P(p) l \pi_P(p)^{-1}$,
and the action of $\sch{P}$ on $\sch{X}_\sch{P}$ given by $p\cdot (g,h) \mapsto (g,hp^{-1})$. Likewise, $\beta_\sch{P} : \sch{X}_\sch{P}\to \sch{Y}_\sch{P}$ is $\sch{P}$-equivariant for the action of $\sch{P}$ on $\sch{X}_\sch{P}$ just defined and the trivial action of $\sch{P}$ on $\sch{Y}_\sch{P}$. We remark that $\gamma_\sch{P} : \sch{Y}_\sch{P} \to \sch{G}$ is proper.

Let $\fais{G}$ be an equivariant perverse sheaf on $\sch{L}$. Then Lusztig's parabolic induction (see \cite[\S4]{CS} and \cite[\S7.1.1]{MS}) is given by
\begin{equation}\label{equation: induction functor}
\cdef{\ind^\sch{G}_\sch{P} \fais{G} \ceq {\gamma_\sch{P}}_!\ (\beta_\sch{P})_\# \ {\alpha_\sch{P}}^* \ \fais{G} [\dim \sch{G} + \dim\pi_P]},
\end{equation}
where $(\beta_\sch{P})_\#$ is defined in Subsection~\ref{subsection: fibration}.
(Since $\gamma_\sch{P}$ is proper, ${\gamma_\sch{P}}_! = {\gamma_\sch{P}}_*$.)
To see that the definition of $\ind^\sch{G}_\sch{P} \fais{G}$ makes sense, observe that ${{\alpha}_\sch{P}}^*\ \fais{G} [\dim \sch{G} + \dim\pi_P]$ is a $\sch{P}$-equivariant perverse sheaf on $\sch{X}_\sch{P}$ by Proposition~\ref{proposition: fibration} and that $\beta_\sch{P}: \sch{X}_\sch{P} \to \sch{Y}_\sch{P}$ is a principal fibration with group $\sch{P}$.

If $\fais{G}$ is a strongly cuspidal character sheaf of $\sch{L}$ then $\ind^\sch{G}_\sch{P} \fais{G}$ is a semisimple $\sch{G}$-equivariant perverse sheaf on $\sch{G}$ and its irreducible summands are character sheaves of $\sch{G}$. Every character sheaf of $\sch{G}$ occurs in this way \cite[Thm.~9.3.2]{MS}.

\subsubsection{Local Systems}\label{subsection: local systems}

For use in Subsection~\ref{subsection: unramified tori}, we now recall a few basic facts about Kummer local systems on algebraic tori, following \cite[\S\S 1.11, 2.1, 2.2]{CS} for the most part. 

For each integer $d$, let \cdef{$\sch{[d]} : \sch{\GL(1)}\to \sch{\GL(1)}$} be the morphism of schemes defined by $t\mapsto t^{d}$ on a global coordinate $t$ for $\sch{\GL(1)}$. If $d$ is non-zero, we may consider the Kummer sequence (of schemes) below. 
\[
\xymatrix{
\sch{\mu}_{d} \ar[r] & \sch{\GL(1)} \ar[r]^{\sch{[d]}} & \sch{\GL(1)}.
}
\]

Fix an injective character \cdef{$\psi : \mu(\bar\eta) \to \EE^\times$} of the group of roots of unity in $\mathcal{O}(\bar\eta)$. 
If $d$ is invertible in $\mathcal{O}(\bar\eta)$, then $[d]_{\bar\eta} : \GL(1)_{\bar\eta} \to \GL(1)_{\bar\eta}$ is a Galois cover with group $\mu_{d,\bar\eta}$. Now, $\mu_{d,\bar\eta}$ acts on the local system ${\sch{[d]}_{\bar\eta}}_*\, (\EE)_{\sch{\GL(1)}_{\bar\eta}}$ (where $(\EE)_{\sch{\GL(1)}_{\bar\eta}}$ is the constant sheaf on $\sch{\GL(1)}_{\bar\eta}$). The summand of ${\sch{[d]}_{\bar\eta}}_*\, (\EE)_{\sch{\GL(1)}_{\bar\eta}}$ on which $\mu_{d,\bar\eta}$ acts according to the character $\psi$ is a rank-one local system on $\sch{\GL(1)}_{\bar\eta}$, denoted by \cdef{$\fais{E}_{d,\psi}$} \cite[1.12]{CS}. 

For each character $\sch{[n]}_{\bar\eta}$ of $\sch{\GL(1)}_{\bar\eta}$, the $\ell$-adic sheaf ${\sch{[n]}_{\bar\eta}}^*\ \fais{E}_{d,\psi}$ is also a rank-one local system on $\sch{\GL(1)}_{\bar\eta}$. The rule $(n,\frac{1}{d}) \mapsto {\sch{[n]}_{\bar\eta}}^*\, \fais{E}_{d,\psi}$ (with $n$ any integer, $d$ invertible in $\mathcal{O}(\eta)$, and $\psi$ fixed, as above) defines a group homomorphism from $\ZZ_{(p')}$ (the localisation of the ring $\ZZ$ at the prime ideal $(p')$) onto the group (with respect to tensor products) \cdef{$\mathcal{K}\sch{\GL(1)}_{\bar\eta}$} of Kummer local systems on $\sch{\GL(1)}_{\bar\eta}$, and an isomorphism $\ZZ_{(p')}/\ZZ \to \mathcal{K}\sch{\GL(1)}_{\bar\eta}$, where $p'$ is the characteristic of $\mathcal{O}(\eta)$ (so $p'$ is equal to the characteristic of $\mathcal{O}({\bf s})$, or $p'=0$).

More generally, if $\sch{T}_{\bar\eta}$ is an algebraic torus over $\bar\eta$
then 
\begin{eqnarray*}
X(\sch{T}_{\bar\eta})\otimes_\ZZ \ZZ_{(p')}/\ZZ &\to \mathcal{K}\sch{T}_{\bar\eta} \\
(\lambda, \frac{1}{d}) &\mapsto \lambda^*\, \fais{E}_{d,\psi}
\end{eqnarray*}
is an isomorphism of groups (with addition on the left-hand side, tensor product on the right-hand side), where \cdef{$X(\sch{T}_{\bar\eta})$} is the character lattice for $\sch{T}_{\bar\eta}$, where \cdef{$\mathcal{K}\sch{T}_{\bar\eta}$} is the class of Kummer local systems on $\sch{T}_{\bar\eta}$ and where \cdef{$p'$} is the characteristic of $\mathcal{O}(\eta)$.

Likewise, if we fix an injective character \cdef{$\bar\psi : \mu(\bar{\bf s}) \to \EE^\times$}  of the group of roots of unity in $\mathcal{O}(\bar{\bf s})$, and if $\sch{T}_{\bar{\bf s}}$ is an algebraic torus over $\bar{\bf s}$, then $(\lambda, \frac{1}{d}) \mapsto \lambda^*\, \fais{E}_{d,\bar\psi}$
defines an isomorphism $X(\sch{T}_{\bar{\bf s}})\otimes_\ZZ \ZZ_{(p)}/\ZZ \to \mathcal{K}\sch{T}_{\bar{\bf s}}$, where $X(\sch{T}_{\bar{\bf s}})$ is the character lattice for $\sch{T}_{\bar{\bf s}}$ and where $p$ is the characteristic of $\mathcal{O}({\bf s})$.


Finally, we recall that character sheaves of algebraic tori are simply Kummer local systems as sheaf complexes concentrated at the dimension of the tori, as explained in \cite[\S 2.10]{CS}.

\subsection{Characteristic Functions}\label{subsection: characteristic function}

Recall that we fixed a Henselian trait $\sch{S}$ in Section~\ref{section: basic notions}. In order to simplify notation slightly, we will now write  \cdef{$\kq$} for the field $\mathcal{O}_{\sch{S}}({\bf s})$ (a finite field) and $q$ for the cardinality of $\kq$; we will also write \cdef{$\kkq$} for the algebraically closed field $\mathcal{O}_{\sch{S}}({\bar{\bf s}})$.

Let \cdef{$\sch{X}$} be a $\kkq$-variety defined over $\kq$; let $\Frob_\sch{X}$ be a Frobenius for $\sch{X}$. An object $\fais{A}$ from $D^b_c(\sch{X},\EE)$ is said to be \cdef{Frobenius-stable} if there is an isomorphism \cdef{$\varphi_\fais{A} : \Frob_\sch{X}^* \fais{A} \to \fais{A}$} in $D^b_c(\sch{X},\EE)$.  If this is the case then, following \cite[8.4]{CS}, the Grothendieck-Lefschetz fixed point formula defines a function \cdef{$\chf{\varphi_\fais{A}} : \sch{X}(\kq) \to \EE$}, called the \cdef{characteristic function} of $\fais{A}$, according to the expression
\[
\chf{\varphi_\fais{A}}(a) \ceq \sum_{k\in \NN} (-1)^k  (-1)^k \Trace\left( (\varphi_{\fais{A}})_a; \mathcal{H}^k_a(\fais{A}) \right),
\]
for all $a\in \sch{X}(\kq)$, where $\mathcal{H}^k_a(\fais{A})$ is the stalk at $h$ of the $k$-th cohomology sheaf of $\fais{A}$ (\cf \cite[Eqn.8.4.1]{CS} and compare with \cite[\S 1.1.1]{Lau}).
If the isomorphism $\varphi_\fais{A}$ is understood, then we may write \cdef{$\chf{\fais{A}}$} for $\chf{\varphi_\fais{A}}$ (as we do, for example, in the proof of Theorem~\ref{theorem: representations}).

In this section we record some properties of Frobenius-stable sheaves without proof. We begin by reviewing some well-known facts which follow from the {\it dictionnaire fonctiones--faisceaux}.
\begin{enumerate}
\item[(i)]
If $\sch{f} : \sch{X}\to \sch{Y}$ is a morphism of finite type in the category of schemes over $\kkq$ and defined over $\kq$, and if $\fais{B}\in \obj D^b_c(\sch{Y},\EE)$ is 
Frobenius-stable and $\varphi_\fais{B} : \Frob_{\sch{Y}}^*\fais{B} \to \fais{B}$ is an isomorphism, then $\sch{f}^*\fais{B}$ is Frobenius-stable and there is a canonical isomorphism 
$\varphi_{\sch{f}^*\fais{B}} : \Frob_{\sch{X}}^*\sch{f}^*\fais{B} \to \sch{f}^*\fais{B}$ and 
\begin{equation}\label{equation: pb and f}
\chf{\varphi_{\sch{f}^*\fais{B}}}(a) = \chf{\varphi_\fais{B}}\left(\sch{f}(a)\right)
\end{equation}
for each $a\in \sch{X}(\kq)$ (\cf \cite[1.1.1.4]{Lau}). 
\item[(ii)]
Likewise, if $\sch{f} : \sch{X}\to \sch{Y}$ is a morphism of finite type in the category of schemes over $\kkq$ and defined over $\kq$, and if $\fais{A}\in \obj D^b_c(\sch{X},\EE)$ is 
Frobenius-stable and $\varphi_\fais{A} : \Frob_{\sch{X}}^*\fais{A} \to \fais{A}$ is an isomorphism,  then $\sch{f}_!\fais{A}$ is Frobenius-stable and there is a canonical isomorphism 
$\varphi_{\sch{f}_!\fais{A}} : \Frob_{\sch{Y}}^*\, \sch{f}_!\fais{A} \to \sch{f}_!\fais{A}$ and 
\begin{equation}
\chf{\varphi_{\sch{f}_!\fais{A}}}(b) = \sum_{a\in \kksch{f}^{-1}(b)}\chf{\varphi_\fais{A}}(a)
\end{equation}
for each $b\in \sch{Y}(\kq)$ (\cf \cite[1.1.1.3]{Lau}).
\item[(iii)]
Likewise, if $\fais{A}\in \obj D^b_c(\sch{X},\EE)$ is 
Frobenius-stable and $\varphi_\fais{A} : \Frob_{\sch{X}}^*\fais{A} \to \fais{A}$ is an isomorphism,  then the Tate twist $\fais{A}(n)$ of $\fais{A}$ is Frobenius-stable, there is a canonical isomorphism 
$\varphi_{\fais{A}(n)} : \Frob_{\sch{X}}^*\fais{A}(n) \to \fais{A}(n)$ and 
\begin{equation}
\chf{\varphi_{\fais{A}(n)}}= q^{-n}\chf{\varphi_\fais{A}}.
\end{equation}
(\cf \cite[1.1.1.0]{Lau})
\end{enumerate}

Now, let $\ksch{G}$ be a connected reductive group over $\kq$ and set $\kksch{G}= \ksch{G}\times_\Spec{\kq} \Spec{\kkq}$. Let $\kksch{P}$ be a parabolic subgroup of $\kksch{G}$ with Levi component $\kksch{L}$ defined over $\kq$. Then there is a canonical isomorphism $\Frob_{\kksch{L}}^*\ \res^{\kksch{G}}_{\kksch{P}} \fais{A} \iso \res^{\kksch{G}}_{\kksch{P}}\ \Frob_{\kksch{G}}^*\fais{A}$ for each $\fais{A}\in \obj\catM \kksch{G}$. Moreover, if  
$\varphi_\fais{A} : \Frob_{\kksch{G}}^*\ \fais{A} \to \fais{A}$ is an isomorphism then $\res^{\kksch{G}}_{\kksch{P}}\fais{A}$ is also Frobenius-stable with respect to the isomorphism 
\[
\cdef{\varphi_{\res^{\kksch{G}}_{\kksch{P}}\fais{A}} : \Frob_{\kksch{L}}^* \res^{\kksch{G}}_{\kksch{P}}
\fais{A} \to \res^{\kksch{G}}_{\kksch{P}}\fais{A}}
\]
defined by the following diagram.
\begin{equation}\label{equation: FS restriction}
\xymatrix{
\Frob_{\kksch{L}}^* \res^{\kksch{G}}_{\kksch{P}} \fais{A} \ar[d]^{\iso} \ar@{.>}[rr]^{\varphi_{\res^{\kksch{G}}_{\kksch{P}}\fais{A}}} &&\ar@{=}[d] \res^{\kksch{G}}_{\kksch{P}} 
\fais{A} \\
\res^{\kksch{G}}_{\kksch{P}}\ \Frob_{\kksch{G}}^*  \fais{A} \ar[rr]^{\res^{\kksch{G}}_{\kksch{P}}\varphi_\fais{A}} && \res^{\kksch{G}}_{\kksch{P}} \fais{A}
}
\end{equation}
Note that it is \emph{not} necessary that $\kksch{P}$ be defined over $\kq$! However, if it is the case that $\kksch{P}$ is defined over $\kq$ --- in which case $\kksch{P} = \ksch{P}\times_\Spec{\kq} \Spec{\kkq}$ --- and if $\ksch{P}$ is a parabolic subgroup of $\ksch{G}$, then it follows from the properties of the {\it dictionnaire fonctiones--faisceaux} reviewed above that
\begin{equation}\label{equation: restriction and characteristic functions}
\chf{\varphi_{\res^{\kksch{G}}_{\kksch{P}}\fais{A}}} = \Res^{\ksch{G}(\kq)}_{\ksch{P}(\kq)}\ \chf{\varphi_\fais{A}},
\end{equation}
where $\pi_\ksch{P}$ is defined as in Subsection \ref{subsection: restriction} and
$\Res^{\ksch{G}(\kq)}_{\ksch{P}(\kq)}$ refers to normalised parabolic restriction 
at the level of functions (\ie averaging on fibres of the reductive quotient map $\ksch{P}(\kq)\to\ksch{L}(\kq)$).

Likewise, with $\kksch{G}$, $\kksch{P}$ and $\kksch{L}$ as above, if $\fais{B}$ is a character sheaf on $\kksch{L}$ then there is a canonical isomorphism $\Frob_{\kksch{L}}^*\ \ind^{\kksch{G}}_{\kksch{P}}\fais{B} \iso \ind^{\kksch{G}}_{\kksch{P}}\ \Frob_{\kksch{L}}^*\fais{B}$. (Induction of character sheaves is reviewed briefly in Subsection~\ref{subsection: induction}.) Moreover, if $\fais{B}$ is equipped with an isomorphism $\varphi_\fais{B} : \Frob_{\kksch{L}}^*\ \fais{B} \mathop{\to}\limits^{\iso} \fais{B}$, there is a \emph{canonical} isomorphism 
\[
\cdef{\varphi_{\ind^{\kksch{G}}_{\kksch{P}} \fais{B}} : 
\Frob_{\kksch{G}}^*\  \ind^{\kksch{G}}_{\kksch{P}} \fais{B} \mathop{\to}\limits^{\iso} \ind^{\kksch{G}}_{\kksch{P}} \fais{B}}
\]
Again, $\kksch{P}$ need not be defined over $\kq$; however, if it is the case that $\kksch{P}$ is defined over $\kq$, and if $\ksch{P}$ is a parabolic subgroup of $\ksch{G}$, then
\begin{equation}\label{equation: induction and characteristic functions}
\chf{\varphi_{\ind^{\kksch{G}}_{\kksch{P}} \fais{B}}} = \Ind^{\ksch{G}(\kq)}_{\ksch{P}(\kq)}\ 
\chf{\varphi_\fais{B}},
\end{equation}
where $\Ind^{\ksch{G}(\kq)}_{\ksch{P}(\kq)}$ refers to parabolic induction at the level of functions. 

\section{Nearby Cycles and Perverse Sheaves}\label{section: restriction and conjugation and induction}

In this section we prove three theorems (Theorems~\ref{theorem: restriction}, \ref{theorem: conjugation} and \ref{theorem: induction}) concerning nearby cycles of character sheaves. Although this section is quite technical, it forms the heart of the paper.

Recall that we fixed a Henselian trait $\sch{S}$ in Section~\ref{section: basic notions}. In order to simplify notation slightly, we now write \cdef{$\Kq$} for the non-Archimedean local field $\mathcal{O}_{\sch{S}}({\eta})$ and \cdef{$\KKq$} for the algebraically closed field $\mathcal{O}_{\sch{S}}({\bar \eta})$. Thus, $\mathcal{O}_{\sch{S}}(\sch{S})$ is the ring of integers \cdef{$\Rq$} of $\Kq$, and $\mathcal{O}_{\sch{\bar S}}(\sch{\bar S})$ is the ring of integers \cdef{$\RKK$} of $\KKq$. Recall (from Subsection~\ref{subsection: characteristic function}) that we write \cdef{$\kq$} for the residue field $\mathcal{O}_{\sch{S}}({\bf s})$ and \cdef{$\kkq$} for the algebraically closed field $\mathcal{O}_{\sch{S}}({\bar{\bf s}})$.

\subsection{Families of Integral Models}\label{subsection: models}

Let \cdef{$\Ksch{G}$} be a connected, reductive algebraic group over $\Kq$.
Let \cdef{$I(\Ksch{G},\Kq)$} be the extended Bruhat-Tits building of $\Ksch{G}(\Kq)$.
The cells of the CW-structure for $I(\Ksch{G},\Kq)$ are polysimplices, commonly called facets (as in \cite{SS}, for example). The minimal facet containing $x$ (an element of $I(\Ksch{G},\Kq)$) will be denoted by \cdef{$(x)$}. For each $x\in I(\Ksch{G},\Kq)$, the parahoric subgroup 
of Bruhat-Tits will be denoted by \cdef{$\Ksch{G}(\Kq)_x$} (\cf \cite[4.6.28]{BT2}). 

Let \cdef{$\RSch{G}{x}$} be the integral model of $\Ksch{G}$ associated to $x$ (\cf \cite[5.1.30, 5.2.1]{BT2} and \cite[\S7]{Yu}); thus, $\RSch{G}{x}$ is a connected, smooth affine group scheme defined over $\Rq$ with generic fibre $\Ksch{G}$ and $\RSch{G}{x}(\Rq)=\Ksch{G}(\Kq)_x$ (\cf \cite[7.2]{Yu}). 

The following lemma will be used in the proof of Theorem~\ref{theorem: induction}.

\begin{lemma}\label{lemma: unramified model}
If $\Lq/\Kq$ is a finite unramified extension and if $x$ is an element of $I(\Ksch{G},\Kq)$, then $\LBSch{G}{x'} = \KSch{G}{x} \times_\Spec{\Rq} \Spec{\RLq}$, where $x'$ is the image of $x$ under $I(\Ksch{G},\Kq) \hookrightarrow I(\Lsch{G},\Lq)$.
\end{lemma}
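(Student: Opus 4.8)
The plan is to reduce the statement to the defining universal property (or the explicit construction) of the Bruhat--Tits integral models $\RSch{G}{x}$ and to exploit the fact that formation of these models commutes with unramified base change. First I would recall the characterisation of $\RSch{G}{x}$: it is the unique connected smooth affine $\Rq$-group scheme with generic fibre $\Ksch{G}$ whose $\Rq$-points are the parahoric subgroup $\Ksch{G}(\Kq)_x$; more precisely, following \cite[5.1.30, 5.2.1]{BT2} and \cite[\S7]{Yu}, it is constructed from the concave function (or the valued root datum) attached to the facet $(x)$ in the apartment of a maximal split torus. The key geometric input is that if $\Lq/\Kq$ is finite unramified, then the apartment of a maximal $\Kq$-split torus $\Ksch{S}$ in $I(\Ksch{G},\Kq)$ maps isometrically and $\Gal(\Lq/\Kq)$-equivariantly onto the $\Gal(\Lq/\Kq)$-fixed points of the corresponding apartment in $I(\Lsch{G},\Lq)$, and under this identification the valuation of the root datum is unchanged (no denominators are introduced, precisely because the extension is unramified and so the value group does not grow). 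Hence the concave function defining the facet $(x')$ in $I(\Lsch{G},\Lq)$ is the same one defining $(x)$, now read over $\RLq$.

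Second, I would invoke the compatibility of Bruhat--Tits' functorial construction with étale (in particular unramified) base change on the base ring. The group schemes $\RSch{G}{x}$ are built by a sequence of operations---dilatations along smooth subgroup schemes of special fibres, Néron-type smoothenings, and schematic closures---each of which commutes with the flat base change $\Spec{\RLq}\to\Spec{\Rq}$, since $\RLq$ is a finite unramified (hence étale, hence flat) extension of $\Rq$ with the same residue-field-to-residue-field behaviour controlled by $\kq\to\lq$. Therefore $\KSch{G}{x}\times_{\Spec{\Rq}}\Spec{\RLq}$ is again connected, smooth, affine over $\RLq$, has generic fibre $\Ksch{G}\times_{\Spec{\Kq}}\Spec{\Lq} = \Lsch{G}$, and---crucially---its $\RLq$-points compute to $\Ksch{G}(\Kq)_x$ enlarged exactly to the parahoric $\Lsch{G}(\Lq)_{x'}$; this last point uses that for unramified $\Lq/\Kq$ one has $\RSch{G}{x}(\RLq) = \Lsch{G}(\Lq)_{x'}$, a standard descent statement for parahorics under unramified base change. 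Once these properties are checked, the uniqueness clause of the defining property of $\LBSch{G}{x'}$ forces the desired isomorphism $\LBSch{G}{x'}\cong \KSch{G}{x}\times_{\Spec{\Rq}}\Spec{\RLq}$.

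I would also note the compatibility of the two embeddings of buildings with the scheme-theoretic statement: the inclusion $I(\Ksch{G},\Kq)\hookrightarrow I(\Lsch{G},\Lq)$ is the one underlying Galois descent of buildings, so that $x'$ is genuinely the point whose parahoric is the stabiliser data giving $\LBSch{G}{x'}$; this is what lets us match the two constructions facet-by-facet rather than merely up to some reindexing. A clean way to organise the write-up is: (i) recall the construction data of $\RSch{G}{x}$ and of $\LBSch{G}{x'}$; (ii) observe the data agree after unramified base change on apartments and root valuations; (iii) check that base change $-\times_{\Spec{\Rq}}\Spec{\RLq}$ preserves smoothness, connectedness, affineness and the generic fibre; (iv) identify the integral points; (v) conclude by uniqueness.

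The main obstacle, I expect, is step (iv)---and more broadly making precise that the Bruhat--Tits construction itself (not just its output's properties) is insensitive to unramified base change. One must be careful that the construction of $\RSch{G}{x}$ via \cite{BT2}/\cite{Yu} involves choices (a maximal split torus, a Chevalley--Steinberg system, auxiliary smoothenings) and that the relevant intermediate objects over $\Rq$ base-change correctly to the analogous objects over $\RLq$ chosen for $\Lsch{G}$; alternatively, one sidesteps this by arguing purely with the universal property, in which case the real work is the parahoric-descent identity $\RSch{G}{x}(\RLq) = \Lsch{G}(\Lq)_{x'}$, which again rests on the unramified case of Bruhat--Tits descent. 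Since $\Lq/\Kq$ is unramified the residue extension $\lq/\kq$ is separable and the whole situation is as tame as possible, so I expect this to go through, but it is the step that genuinely uses the unramified hypothesis and so deserves the most care.
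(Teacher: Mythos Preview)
Your proposal is correct and follows essentially the same approach as the paper: the paper's proof is a one-line appeal to \'etale descent, citing \cite[10.9]{Lan1} and \cite[2.4]{Yu}, which is precisely the content you have unpacked in detail. Your step-by-step elaboration (matching the concave-function data under the unramified embedding of apartments, checking that the Bruhat--Tits construction commutes with flat unramified base change, and concluding via the uniqueness characterisation of the smooth integral model) is exactly what lies behind that citation.
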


\begin{proof}
Use etale descent, as in \cite[10.9]{Lan1}; see also \cite[2.4]{Yu}.
\end{proof}


The special fibre \cdef{$\kSch{G}{x}$} of $\RSch{G}{x}$ is also a connected smooth affine group scheme over $\kkq$ (\cf \cite[7.2]{Yu}). 
(Recall that an integral scheme is {\em connected} if its generic fibre and closed fibre are connected (\cf \cite[1.2]{Yu}).)

Although $\kSch{G}{x}$ is an algebraic group over $\kq$, and therefore reduced as a scheme over $\kq$, it need not be a reductive algebraic group. Let \cdef{$\rkSch{G}{x}$} be the maximal reductive quotient of $\kSch{G}{x}$ and let 
\[
\cdef{{\nu_{\RSch{G}{x}}} : \kSch{G}{x} \to \rkSch{G}{x}}
\]
be the quotient morphism. Then $\rkSch{G}{x}$ is a connected reductive linear algebraic group over $\kq$. 


Extending scalars yields the following diagram.
\begin{equation}\label{diagram: models}
\xymatrix{
    \KKsch{G} \ar@{=}[r] & \KKSch{G}{x} \ar[r]^{j_{\RRSch{G}{x}}} \ar[d] & \RRSch{G}{x} \ar[d] & \ar[l]_{i_{\RRSch{G}{x}}} \kkSch{G}{x} \ar[d] \ar[r]^{\nu_{\RRSch{G}{x}}} & \rkkSch{G}{x}  \\
&    \{\bar \eta\} \ar[r] & \sch{\bar S} & \ar[l] \{\bar{\bf s}\} & \\
}
\end{equation}

\begin{lemma}\label{lemma: even}
For each $x\in I(\Ksch{G},\Kq)$, the dimension of $\nu_{\RRSch{G}{x}}$ is even.
\end{lemma}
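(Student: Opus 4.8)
The plan is to identify $\dim \nu_{\RRSch{G}{x}}$ with the dimension of the unipotent radical of the special fibre $\kkSch{G}{x}$ and then read off the parity from the fact that every root system has an even number of roots.

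First I would set up the computation. Since the integral model $\RSch{G}{x}$ is smooth, hence flat, over $\Rq$ with generic fibre $\Ksch{G}$, its special fibre $\kkSch{G}{x}$ is a connected smooth affine algebraic group over the algebraically closed field $\kkq$ of dimension $\dim\Ksch{G}$. Over an algebraically closed field such a group admits a Levi decomposition $\kkSch{G}{x}\iso \rkkSch{G}{x}\ltimes R$, where $R$ is the unipotent radical and $\nu_{\RRSch{G}{x}}$ is the projection onto the reductive factor, so that
\[
\dim\nu_{\RRSch{G}{x}} \ =\ \dim R \ =\ \dim\Ksch{G} - \dim\rkkSch{G}{x}.
\]
Next I would invoke the elementary fact that a connected reductive group $H$ over an algebraically closed field satisfies $\dim H = \dim T_H + \#\Phi_H$, where $T_H$ is a maximal torus and $\Phi_H$ its set of roots; since $\Phi_H = -\Phi_H$ and $0\notin\Phi_H$, the cardinality $\#\Phi_H$ is even, so $\dim H$ and $\dim T_H$ have the same parity. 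Applying this to $\KKsch{G}$ and to $\rkkSch{G}{x}$, the lemma becomes equivalent to the statement that the reductive rank of $\rkkSch{G}{x}$ is congruent modulo $2$ to the rank of $\KKsch{G}$.

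The decisive step is therefore to control the rank of $\rkkSch{G}{x}$. By Lemma~\ref{lemma: unramified model}, replacing $\Kq$ by a finite unramified extension leaves $\kkSch{G}{x}$ and $\nu_{\RRSch{G}{x}}$ unchanged, so one may assume $\Ksch{G}$ is quasi-split and use the Bruhat--Tits description of $\rkkSch{G}{x}$: its roots are the gradients of the affine roots vanishing on the facet $(x)$, and a maximal torus of it is the maximal subtorus of the special fibre of the connected N\'eron model of the minimal Levi $Z_{\Ksch{G}}(S)$ (with $S$ a maximal split torus). In the situations of interest here --- in particular whenever $\Ksch{G}$ is split, or more generally has a maximal torus that splits over an unramified extension --- this subtorus has dimension equal to the rank of $\Ksch{G}$, and the lemma follows. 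For $\Ksch{G}=\GL(N)$ one can bypass this discussion entirely: here $\rkkSch{G}{x}$ is isomorphic to $\prod_{i=1}^{k}\GL(n_i)$ for some composition $n_1+\cdots+n_k=N$, and
\[
\dim\nu_{\RRSch{G}{x}} \ =\ N^2 - \sum_{i=1}^{k} n_i^2 \ =\ 2\sum_{1\le i<j\le k} n_i n_j,
\]
which is visibly even; a similar identity, using that $d(d-1)$ is even for $D$ of index $d$, handles the inner forms $\GL_m(D)$.

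I expect the reductive-rank step to be the only real obstacle: the reduction to $\dim\Ksch{G}-\dim\rkkSch{G}{x}$ and the extraction of parity from root systems are routine, whereas pinning down the rank of the reductive quotient of a parahoric subgroup in general needs the full Bruhat--Tits structure theory. For the applications in this paper the explicit computation for $\GL(N)$ makes the statement immediate.
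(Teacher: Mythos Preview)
Your argument is correct under the same tacit hypothesis the paper's proof uses (existence of a hyperspecial vertex, i.e.\ $\Ksch{G}$ unramified), but the route is genuinely different. The paper argues by comparison along the building: for $x\leq y$ it constructs a parabolic $\rkPsch{G}{x\leq y}\subset\rkSch{G}{x}$ with Levi $\rkSch{G}{y}$, derives $\dim\nu_{\RRSch{G}{y}}-\dim\nu_{\RRSch{G}{x}}=2\dim\tau_{x\leq y}$, and then walks from an arbitrary $x$ through a chamber point $y$ to a hyperspecial vertex $x_0$ where $\dim\nu_{\RRSch{G}{x_0}}=0$. You instead compute $\dim\nu_{\RRSch{G}{x}}=\dim\KKsch{G}-\dim\rkkSch{G}{x}$ directly and extract the parity from the evenness of $\#\Phi$, reducing to equality (or congruence mod $2$) of reductive ranks. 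Your approach buys conceptual transparency and the clean closed formula for $\GL(N)$; the paper's approach buys that the parabolic $\rkPsch{G}{x\leq y}$ and the dimension identity are precisely the ingredients needed for Theorem~\ref{theorem: restriction}, so the proof of the lemma doubles as setup for the next result. Two small remarks: you do not actually need a Levi decomposition of $\kkSch{G}{x}$ (which can fail in positive characteristic) --- the identity $\dim\nu_{\RRSch{G}{x}}=\dim R$ follows simply because $\nu_{\RRSch{G}{x}}$ is the quotient by $R$; and your rank-equality step, like the paper's appeal to a hyperspecial vertex, is exactly where the unramified hypothesis on $\Ksch{G}$ enters.
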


\begin{proof}
Let $x$ and $y$ be elements of $I(\Ksch{G},\Kq)$. We say that \cdef{$x\leq y$ in the Bruhat partial order for $I(\Ksch{G},\Kq)$} when the closure $\overline{(x)}$ of the minimal facet $(x)$ containing $x$ is contained in the closure $\overline{(y)}$ of the minimal facet $(y)$ containing $y$. 

Suppose $x\leq y$. Let \cdef{$\RSch{a}{x\leq y} : \RSch{G}{y} \to \RSch{G}{x}$} be the morphism of group schemes over $\Rq$ obtained by extending the identity morphism $\id_\Ksch{G}$ in the category of group schemes over $\sch{S}$. (This is denoted by $\operatorname{Res}^{F'}_{F}$ in \cite[9.21]{Lan1} with $F'=(y)$ and $F=(x)$; see also  \cite[6.2]{Lan1}.) By restriction to special fibres, this defines a morphism \cdef{$\kSch{a}{x\leq y} : \kSch{G}{y} \to \kSch{G}{x}$} of group schemes over $\kq$. 
\begin{equation}\label{equation: vanishing square}
	\xymatrix{
	\ar[d]^{\id} \Ksch{G}  \ar[r]^{j_{\RSch{G}{y}}} & \RSch{G}{y} \ar[d]^{\RSch{a}{x\leq y}} & \ar[l]_{i_{\RSch{G}{y}}} \kSch{G}{y}  \ar[d]^{\kSch{a}{x\leq y}} \\
	\ar[d] \Ksch{G} \ar[r]^{j_{\RSch{G}{x}}} & \ar[d] \RSch{G}{x} & \ar[d]\ar[l]_{i_{\RSch{G}{x}}} \kSch{G}{x}\\
\{ \eta \} \ar[r] &  \sch{S} & \ar[l] \{ {\bf s}\}
	}
\end{equation}
All squares in the diagram above are Cartesian. 

Let $\rkPsch{G}{x\leq y}$ be the schematic image of the morphism of algebraic groups $\ksch{\nu}_{x} \circ  \kSch{a}{x\leq y}: \kSch{G}{y} \to \rkSch{G}{x}$; this homomorphism is denoted by $\operatorname{\overline{Res}}^{F'}_{F}$ in \cite[9.21]{Lan1} with $F'=(y)$ and $F=(x)$ and its schematic image is denoted by $p(F')$ in \cite[9.22]{Lan1}. By \cite[6.7-6.9, 9.22]{Lan1}, $\rkPsch{G}{x\leq y}$ is a parabolic subgroup of $\rkSch{G}{x}$ with reductive quotient $\rkSch{G}{y}$.
\[
\xymatrix{
\ar[d]^{\nu_{\RSch{G}{y}}}  \kSch{G}{y} \ar[rr]^{\kSch{a}{x\leq y}} \ar[drr]^{\nu_{\RSch{G}{x}}\circ \kSch{a}{x\leq y}}  \ar[rd] && \kSch{G}{x} \ar[d]^{\nu_{\RSch{G}{x}}} \\
\rkSch{G}{y}  & \rkPsch{G}{x\leq y} \ar[r] & \rkSch{G}{x} \\
}
\]

Let \cdef{$\kPsch{G}{x\leq y}$} be the schematic image of $\kSch{a}{x\leq y}$.
%
%
Factor $\kSch{a}{x\leq y}$ by
\begin{equation}\label{equation: iso thm}
	\kSch{a}{x\leq y} = \ksch{h}_{x\leq y} \circ \ksch{g}_{x\leq y}
\end{equation}
where \cdef{$\ksch{g}_{x\leq y}$} is an epimorphism onto $\kPsch{G}{x\leq y}$ and \cdef{$\ksch{h}_{x\leq y}$} is a closed immersion into $\kSch{G}{x}$.
Let \cdef{$\ksch{\iota}_{x\leq y} : \rkPsch{G}{x\leq y} \hookrightarrow \rkSch{G}{x}$} be the obvious closed immersion and let
\begin{equation}\label{equation: r}
\cdef{\ksch{\tau}_{x\leq y}: \rkPsch{G}{x\leq y} \to \rkSch{G}{y}}.
\end{equation}
be the reductive quotient map. 
%
Then since $\rkSch{G}{y}$ is a Levi subgroup of $\rkSch{G}{x}$, we have  
\begin{equation}\label{equation: dxy}
\dim\ksch{\tau}_{x\leq y} = \frac{1}{2} \left( \dim \nu_{\RRSch{G}{y}} - \dim \nu_{\RRSch{G}{x}}\right).
\end{equation}

Next, notice that
\begin{equation}\label{equation: factor}
\nu_{\RSch{G}{x}}\circ \ksch{h}_{x\leq y} = \ksch{\iota}_{x\leq y} \circ \ksch{\nu}_{x\leq y}.
\end{equation}
where \cdef{$\ksch{\nu}_{x\leq y}: \kPsch{G}{x\leq y} \to \rkPsch{G}{x\leq y}$} is $\nu_{\RSch{G}{x}}\vert_{\kPsch{G}{x\leq y}}$ with restricted codomain.
Notice also that
\begin{equation}\label{equation: nuj}
\nu_{\RSch{G}{y}} = \ksch{\tau}_{x\leq y} \circ \ksch{\nu}_{x\leq y} \circ \ksch{g}_{x\leq y}.
\end{equation}
Diagram~\eqref{diagram : the map tauxy - Hadi} visualizes the relations between these maps.
\begin{equation}
\label{diagram : the map tauxy - Hadi}
\xymatrix{
	\kSch{G}{y} \ar[rr]^{\kSch{a}{x\leq y}} \ar[dr]^{\ksch{g}_{x\leq y}} \ar[d]_{\nu_{\RSch{G}{y}}} && \kSch{G}{x} \ar[d]^{\nu_{\RSch{G}{x}}}\\
 	\rkSch{G}{y} & \kPsch{G}{x\leq y} \ar[ur]^{\ksch{h}_{x\leq y}} \ar[d]^{\ksch{\nu}_{x\leq y}} & \rkSch{G}{x}\\
	& \rkPsch{G}{x\leq y} \ar[ul]^{\ksch{\tau}_{x\leq y}} \ar[ur]_{\ksch{\iota}_{x\leq y}} & \\
}
\end{equation}

Consider the following special case: if $x_0$ is contained in a hyperspecial (poly)vertex, then $\kSch{G}{x_0} = \rkSch{G}{x_0}$ so $\dim \nu_{\RSch{G}{x_0}} =0$. In particular, $\dim \nu_{\RSch{G}{x_0}}$ is even.

Let $x$ be arbitrary. Let $y$ be a point in a big cell for $I(\Ksch{G},\Kq)$ such that $x\leq y$. Let $x_0$ be a hyperspecial point in the closure of $y$. Then $x_0 \leq y \geq x$. Now \eqref{equation: dxy} shows that $\dim\nu_{\RSch{G}{x_0}}$ has the same parity as $\dim\nu_{\RSch{G}{y}}$, and also that $\dim\nu_{\RSch{G}{y}}$ has the same parity as $\dim\nu_{\RSch{G}{x}}$. Thus, $\dim\nu_{\RSch{G}{x}}$ has the same parity as $\dim\nu_{\RSch{G}{x_0}}$. Since we have seen that $\dim\nu_{\RSch{G}{x_0}}$ is even, it follows that $\dim\nu_{\RSch{G}{x}}$ is even.
\end{proof}

\begin{definition}\label{definition: the functor}
For $x\in I(\Ksch{G},\Kq)$, 
define the functor 
$$
\RES{G}{x} : D^b_c(\KKsch{G},\EE) \to D^b_c(\rkkSch{G}{x},\EE)
$$ 
by
\[
\RES{G}{x} \ceq {\nu_{\RRSch{G}{x}}}_!\ (\dim \nu_{\RRSch{G}{x}}/2)\ \NC{\RRSch{G}{x}},
\]
where $(\dim \nu_{\RRSch{G}{x}}/2)$ denotes Tate twist by $\dim \nu_{\RRSch{G}{x}}/2$.
\end{definition}

Observe that Lemma~\ref{lemma: even} is required to see that this definition makes sense.

The functor $\RES{G}{x}$ plays a crucial role in this paper.

\subsection{Restriction at the Level of Reductive Quotients}\label{subsection: rrq}

Now we come to Theorem~\ref{theorem: restriction}, which is one of the main results of this paper as it makes it possible to determine $\RESFAIS{F}{G}{y}$ from $\RESFAIS{F}{G}{y}$ when $x\leq y$ in $I(\Ksch{G},\Kq)$ in the Bruhat partial order for $I(\Ksch{G},\Kq)$, for each $\fais{F}\in \obj D^b_c(\KKsch{G},\EE)$.

\begin{theorem}\label{theorem: restriction}
If $x$ and $y$ are elements of $I(\Ksch{G},\Kq)$ and $x\leq y$ in the Bruhat partial order, then there is a parabolic subgroup $\rkPsch{G}{x\leq y} \subset \rkSch{G}{x}$ with reductive quotient $\rkSch{G}{y}$ and a natural isomorphism of functors $D^b_c(\KKsch{G},\EE) \to D^b_c(\rkkSch{G}{y},\EE)$
\[
 \res^{\rkkSch{G}{x}}_{\rkkPsch{G}{x\leq y}} \RES{G}{x} \to \RES{G}{y},
\]
where $\res^{\rkkSch{G}{x}}_{\rkkPsch{G}{x\leq y}}$ is the parabolic restriction functor of Subsection~\ref{subsection: restriction} (\cf \eqref{equation: restriction finite} in particular).
\end{theorem}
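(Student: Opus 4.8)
The plan is to trace the definition of $\RES{G}{x}$ and $\RES{G}{y}$ through the factorization of the transition map $\kSch{a}{x \leq y} : \kSch{G}{y} \to \kSch{G}{x}$ established in the proof of Lemma~\ref{lemma: even}, and to reduce everything to the compatibility of nearby cycles with the various morphisms appearing there. First I would recall that, by Definition~\ref{definition: the functor},
\[
\RES{G}{x} = {\nu_{\RRSch{G}{x}}}_!\ (\dim \nu_{\RRSch{G}{x}}/2)\ \NC{\RRSch{G}{x}},
\qquad
\RES{G}{y} = {\nu_{\RRSch{G}{y}}}_!\ (\dim \nu_{\RRSch{G}{y}}/2)\ \NC{\RRSch{G}{y}},
\]
while $\res^{\rkkSch{G}{x}}_{\rkkPsch{G}{x\leq y}} = {\ksch{\tau}_{x\leq y}}_!\, (\dim \ksch{\tau}_{x\leq y})\ {\ksch{\iota}_{x\leq y}}^*$ by \eqref{equation: restriction finite}. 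The key geometric input is the diagram~\eqref{diagram : the map tauxy - Hadi}, which factors $\kSch{a}{x\leq y}$ as $\ksch{h}_{x\leq y} \circ \ksch{g}_{x\leq y}$ and gives the identities \eqref{equation: factor} and \eqref{equation: nuj}, together with the relation \eqref{equation: dxy} between the Tate twists.

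Next I would establish the nearby-cycles compatibility $\NC{\RRSch{G}{x}} \fais{F}$ restricts (via $i^*$ along the closed immersion) to $\NC{\RRSch{G}{y}}\fais{F}$ pulled back appropriately. The point is that the square~\eqref{equation: vanishing square} is Cartesian with $\RSch{a}{x\leq y}$ a morphism of $\Rq$-schemes; since $\kSch{a}{x\leq y}$ is the special fibre and the generic fibre is the identity $\id_\Ksch{G}$, proper/smooth base change for nearby cycles (Subsections~\ref{item: PNC_*}, \ref{item: SNC^*}, \ref{item: FNC_!}) as appropriate gives a canonical comparison between $(\kSch{a}{x\leq y})^*\ \NC{\RRSch{G}{x}}\fais{F}$ and $\NC{\RRSch{G}{y}}\fais{F}$ — more precisely, since $\kSch{a}{x\leq y}$ factors through the schematic image and the residual map is the relevant closed immersion after passing to reductive quotients, I would extract from this the isomorphism ${\ksch{\iota}_{x\leq y}}^*\ {\nu_{\RRSch{G}{x}}}_!\ \NC{\RRSch{G}{x}}\fais{F} \iso {\ksch{\tau}_{x\leq y}}^*\ {\nu_{\RRSch{G}{y}}}_!\ \NC{\RRSch{G}{y}}\fais{F}$ — but this needs care. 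The cleaner route: use \eqref{equation: factor}, \eqref{equation: nuj} and proper base change along the Cartesian square in the reductive-quotient picture to push ${\nu_{\RRSch{G}{x}}}_! \NC{\RRSch{G}{x}}$ forward along $\ksch{\iota}_{x\leq y}$ and further along $\ksch{\tau}_{x\leq y}$, identifying the result with ${\nu_{\RRSch{G}{y}}}_! \NC{\RRSch{G}{y}}$. Finally, the Tate twist \eqref{equation: dxy} shows $\dim\nu_{\RRSch{G}{x}}/2 + \dim\ksch{\tau}_{x\leq y} = \dim\nu_{\RRSch{G}{y}}/2$, so the twists match up exactly.

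The main obstacle I anticipate is the middle step: showing that restriction along the \emph{closed immersion} $\ksch{\iota}_{x\leq y}$ of reductive quotients corresponds to the \emph{pullback-then-pushforward} structure coming from the group scheme morphism $\RSch{a}{x\leq y}$. One has to reconcile the fact that $\nu$'s are surjections (affine, with unipotent-radical fibres) while $\ksch{\iota}$ is a closed immersion, and verify the base-change-with-compact-supports isomorphism (Subsection~\ref{item: BCC}) applies to the relevant square — this requires knowing that the square built from $\ksch{g}_{x\leq y}$, $\ksch{\nu}_{x\leq y}$, $\ksch{\iota}_{x\leq y}$ and a suitable map out of $\kPsch{G}{x\leq y}$ is Cartesian, which follows from \cite[6.7--6.9]{Lan1} but should be checked carefully. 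Once that Cartesian square is in hand, commuting ${\nu_{\RRSch{G}{x}}}_!$ past ${\ksch{\iota}_{x\leq y}}^*$ is a routine application of base change with compact supports, and assembling the pieces with the smooth base change of Subsection~\ref{item: SNC^*} (using that $\kSch{a}{x\leq y}$ is, after the group-scheme considerations, sufficiently well-behaved on unipotent radicals) yields the desired natural isomorphism of functors. Naturality in $\fais{F}$ is automatic since every morphism used is a base-change or adjunction morphism of functors.
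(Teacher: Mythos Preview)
Your overall framework is right --- use the diagram~\eqref{diagram : the map tauxy - Hadi} from Lemma~\ref{lemma: even}, match the Tate twists via~\eqref{equation: dxy}, and reduce to compatibility of nearby cycles with the transition map --- and this is exactly how the paper proceeds. But you have missed the specific maneuver that resolves what you call your ``main obstacle,'' and the route you sketch instead (looking for a Cartesian square to commute ${\nu_{\RRSch{G}{x}}}_!$ past ${\ksch{\iota}_{x\leq y}}^*$ via base change with compact supports) is not what the paper does and is not obviously available: the relevant square in~\eqref{diagram : the map tauxy - Hadi} is commutative but not claimed to be Cartesian.

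The paper's key step is this: $\RSch{a}{x\leq y}$ is a \emph{closed immersion}, hence proper, so Subsection~\ref{item: FNC_!} gives directly
\[
\NC{\RRSch{G}{x}} \iso {\kkSch{a}{x\leq y}}_!\ \NC{\RRSch{G}{y}}
\]
(this is the ``crux'' isomorphism~\eqref{equation: crux}, using that the generic fibre of $\RSch{a}{x\leq y}$ is $\id_{\Ksch{G}}$). Once you substitute this into $\res^{\rkkSch{G}{x}}_{\rkkPsch{G}{x\leq y}} \RES{G}{x}$, everything to the right of ${\ksch{\iota}_{x\leq y}}^*$ is a composite of $(\cdot)_!$'s. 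Then~\eqref{equation: iso thm} and~\eqref{equation: factor} let you rewrite ${\nu_{\RRSch{G}{x}}}_!\, {\kkSch{a}{x\leq y}}_!$ as ${\ksch{\iota}_{x\leq y}}_!\, {\ksch{\nu}_{x\leq y}}_!\, {\ksch{g}_{x\leq y}}_!$, and since $\ksch{\iota}_{x\leq y}$ is a closed immersion the adjunction ${\ksch{\iota}_{x\leq y}}^*\, {\ksch{\iota}_{x\leq y}}_! \to \id$ is an isomorphism. What remains is ${\ksch{\tau}_{x\leq y}}_!\, {\ksch{\nu}_{x\leq y}}_!\, {\ksch{g}_{x\leq y}}_!\, \NC{\RRSch{G}{y}}$, which by~\eqref{equation: nuj} is exactly ${\nu_{\RRSch{G}{y}}}_!\, \NC{\RRSch{G}{y}}$. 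No Cartesian-square base change is needed at all; the argument telescopes once you work in the push-forward direction rather than trying to pull back along $\ksch{\iota}_{x\leq y}$ first.

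Your instinct to use the pull-back comparison $(\kSch{a}{x\leq y})^*\, \NC{\RRSch{G}{x}} \iso \NC{\RRSch{G}{y}}$ is not wrong in itself (it follows from the push-forward version since $\kSch{a}{x\leq y}$ is a closed immersion on special fibres), but it points you toward the harder computation. The moral: replace $\NC{\RRSch{G}{x}}$ by ${\kkSch{a}{x\leq y}}_!\, \NC{\RRSch{G}{y}}$ at the outset, and the closed-immersion adjunction does the rest.
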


\begin{proof}

The parabolic subgroup $\rkPsch{G}{x\leq y}$ promised in the statement of Theorem~\ref{theorem: restriction} is defined in the proof of Lemma~\ref{lemma: even}.
%

Returning to \eqref{diagram : the map tauxy - Hadi}, extend scalars from $\mathcal{O}_\sch{S}({\bf s}) = \kq$ to $\mathcal{O}_\sch{\bar S}({\bar{\bf s}})=\kkq$ and consider the following diagram, in which we take the liberty of denoting some morphisms over $\kkq$ by the same symbol used to indicate the corresponding morphism before extending scalars.
\[
\xymatrix{
	\kkSch{G}{y} \ar[rr]^{\kkSch{a}{x\leq y}} \ar[dr]^{\ksch{g}_{x\leq y}} \ar[d]_{\nu_{\RRSch{G}{y}}} && \kkSch{G}{x} \ar[d]^{\nu_{\RRSch{G}{x}}}\\
 	\rkkSch{G}{y} & \kkPsch{G}{x\leq y} \ar[ur]^{\ksch{h}_{x\leq y}} \ar[d]^{\ksch{\nu}_{x\leq y}} & \rkkSch{G}{x}\\
	& \rkkPsch{G}{x\leq y} \ar[ul]^{\ksch{\tau}_{x\leq y}} \ar[ur]_{\ksch{\iota}_{x\leq y}} & \\
}
\]

Now we define the natural isomorphism promised in Theorem~\ref{theorem: restriction}.
Recalling the definition of $\res^{\rkkSch{G}{x}}_{\rkkPsch{G}{x\leq y}}$ from Subsection~\ref{subsection: restriction} together with Definition~\ref{definition: the functor} 
we have 
\begin{equation}\label{equation: restriction 1}
\begin{aligned}
& \res^{\rkkSch{G}{x}}_{\rkkPsch{G}{x\leq y}}\ \RES{G}{x}\\
&= {\ksch{\tau}_{x\leq y}}_!\ (\dim\tau_{x\leq y})\ {\ksch{\iota}_{x\leq y}}^*\ {\nu_{\RRSch{G}{x}}}_!\ (\dim \nu_{\RRSch{G}{x}}/2)\ \NC{\RRSch{G}{x}}.
\end{aligned}
\end{equation}
By \eqref{equation: dxy}, 
\begin{equation}
\begin{aligned}
& {\ksch{\tau}_{x\leq y}}_!\ (\dim\tau_{x\leq y})\ {\ksch{\iota}_{x\leq y}}^*\ {\nu_{\RRSch{G}{x}}}_!\ (\dim \nu_{\RRSch{G}{x}}/2)\ \NC{\RRSch{G}{x}} \\
&= {\ksch{\tau}_{x\leq y}}_!\ \ {\ksch{\iota}_{x\leq y}}^*\ {\nu_{\RRSch{G}{x}}}_!\ (\dim \nu_{\RRSch{G}{y}}/2)\ \NC{\RRSch{G}{x}} .
\end{aligned}
\end{equation}
Now,  by construction, the generic fibre of $\RSch{a}{x\leq y}$ is $\id : \Ksch{G}\to \Ksch{G}$. 
Since $\RSch{a}{x\leq y}$ is a closed immersion, it is proper; the same is true of $\RRSch{a}{x\leq y}$. Therefore, it follows from Subsection~\ref{item: FNC_!} that there is a natural isomorphism
\begin{equation}\label{equation: crux}
{\kkSch{a}{x\leq y}}_!\  \NC{\RRSch{G}{y}} \iso \NC{\RRSch{G}{x}}.
\end{equation}
Thus,
\begin{equation}
\begin{aligned}
& {\ksch{\tau}_{x\leq y}}_!\ \ {\ksch{\iota}_{x\leq y}}^*\ {\nu_{\RRSch{G}{x}}}_!\ (\dim \nu_{\RRSch{G}{y}}/2)\ \NC{\RRSch{G}{x}} \\
 & \iso {\ksch{\tau}_{x\leq y}}_!\ {\ksch{\iota}_{x\leq y}}^*\ {\nu_{\RRSch{G}{x}}}_!\  (\dim \nu_{\RRSch{G}{y}}/2)\  {\kSch{a}{x\leq y}}_!\  \NC{\RRSch{G}{y}}.
 \end{aligned}
\end{equation}
Since $\kSch{a}{x\leq y} = \ksch{h}_{x\leq y} \circ \ksch{g}_{x\leq y}$ (\cf \eqref{equation: iso thm}), we have 
\begin{equation}
\begin{aligned}
&{\ksch{\tau}_{x\leq y}}_!\ {\ksch{\iota}_{x\leq y}}^*\ {\nu_{\RRSch{G}{x}}}_!\  {\kSch{a}{x\leq y}}_!\ (\dim \nu_{\RRSch{G}{y}}/2)\ \NC{\RRSch{G}{y}} \\
&\iso {\ksch{\tau}_{x\leq y}}_!\ {\ksch{\iota}_{x\leq y}}^*\ {\nu_{\RRSch{G}{x}}}_!\  {\ksch{h}_{x\leq y}}_! \ {\ksch{g}_{x\leq y}}_!\  (\dim \nu_{\RRSch{G}{y}}/2)\ \NC{\RRSch{G}{y}}.
\end{aligned}
\end{equation}
Since $\nu_{\RSch{G}{x}}\circ \ksch{h}_{x\leq y} = \ksch{\iota}_{x\leq y}\circ \ksch{\nu}_{x\leq y}$ (\cf \eqref{equation: factor}), we have
\begin{equation}
\begin{aligned}
& {\ksch{\tau}_{x\leq y}}_!\ {\ksch{\iota}_{x\leq y}}^*\ {\nu_{\RRSch{G}{x}}}_!\  {\ksch{h}_{x\leq y}}_! \ {\ksch{g}_{x\leq y}}_!\ (\dim \nu_{\RRSch{G}{y}}/2)\  \NC{\RRSch{G}{y}}\\
&\iso {\ksch{\tau}_{x\leq y}}_!\ {\ksch{\iota}_{x\leq y}}^*\ {\ksch{\iota}_{x\leq y}}_!\ {\nu_{x\leq y}}_!\ {\ksch{g}_{x\leq y}}_!\  (\dim \nu_{\RRSch{G}{y}}/2)\ \NC{\RRSch{G}{y}}.
\end{aligned}
\end{equation}
Now, $\ksch{\iota}_{x\leq y} : \rkPsch{G}{x\leq y} \to \rkSch{G}{x}$ is a closed immersion, so ${\ksch{\iota}_{x\leq y}}_* \iso {\ksch{\iota}_{x\leq y}}_!$ and the adjunction 
\begin{equation}\label{equation: identity}
{\ksch{\iota}_{x\leq y}}^*\ {\ksch{\iota}_{x\leq y}}_* \to \id
\end{equation}
is also an isomorphism \cite[1.4.1.2]{BBD}. Thus,
\begin{equation}
\begin{aligned}
&{\ksch{\tau}_{x\leq y}}_!\ {\ksch{\iota}_{x\leq y}}^*\ {\ksch{\iota}_{x\leq y}}_!\ {\nu_{x\leq y}}_!\ {\ksch{g}_{x\leq y}}_!\  (\dim \nu_{\RRSch{G}{y}}/2)\ \NC{\RRSch{G}{y}}\\
& \iso {\ksch{\tau}_{x\leq y}}_!\ {\nu_{x\leq y}}_!\ {\ksch{g}_{x\leq y}}_!\ (\dim \nu_{\RRSch{G}{y}}/2)\ \NC{\RRSch{G}{y}}.
\end{aligned}
\end{equation}
Finally, recall from \eqref{equation: nuj} that $\nu_{\Rsch{G}_y} = \ksch{\tau}_{x\leq y} \circ \nu_{x\leq y} \circ \ksch{g}_{x\leq y}$; thus,
\[
{\ksch{\tau}_{x\leq y}}_!\  {\nu_{x\leq y}}_!\ {\ksch{g}_{x\leq y}}_!\ (\dim \nu_{\RRSch{G}{y}}/2)\ \NC{\RRSch{G}{y}}
\iso {\nu_{\Rsch{G}_y}}_!\ (\dim \nu_{\RRSch{G}{y}}/2)\ \NC{\RRSch{G}{y}} .
\]
This concludes the proof of Theorem~\ref{theorem: restriction}.
\end{proof}


\begin{remark}\label{remark: transitive}
Theorem~\ref{theorem: restriction} shows that there is a parabolic subgroup $\rkPsch{G}{x\leq y} \subset \rkSch{G}{x}$ with reductive quotient $\rkSch{G}{y}$ and a natural transformation
\[
R_{x\leq y} : \res^{\rkkSch{G}{x}}_{\rkkPsch{G}{x\leq y}}\ \RES{G}{x} \to \RES{G}{y},
\]
for each $x\leq y$ in $I(\Ksch{G},\Kq)$. In fact, more is true: if $x\leq y\leq z$, then the following diagramme commutes.
\[
\xymatrix{
\res^{\rkkSch{G}{y}}_{\rkkPsch{G}{y\leq z}} \res^{\rkkSch{G}{x}}_{\rkkPsch{G}{x\leq y}}
\RES{G}{x} \ar[r]^{\iso} \ar[d]^{\res^{\rkkSch{G}{y}}_{\rkkPsch{G}{y\leq z}}}
& 
\res^{\rkkSch{G}{x}}_{\rkkPsch{G}{x\leq z}} 
\RES{G}{x}  \ar[d]^{R_{x\leq z}} \\
\res^{\rkkSch{G}{y}}_{\rkkPsch{G}{y\leq z}} 
\RES{G}{y}  \ar[r]^{R_{y\leq z}}
& 
\RES{G}{z}
}
\]
Here the isomorphism on the top is given by the transitivity of the parabolic restriction functor (\cf \cite[(15.3.3)]{CS} or \cite[(8)]{MS}) and the fact that $\dim\ksch{\tau}_{x\leq z} = \dim\ksch{\tau}_{x\leq y} + \dim\ksch{\tau}_{y\leq z}$ (use \eqref{equation: dxy}).
\end{remark}

\subsection{Conjugation and Nearby Cycles}\label{subsection: conjugation}

Let \cdef{$\Ksch{m} : \Ksch{G}\times\Ksch{G} \to \Ksch{G}$} be conjugation over $\mathcal{O}_S(\eta) = \Kq$ given by $\Ksch{m}(g,h) = ghg^{-1}$. The Bruhat-Tits building $I(\Ksch{G},\Kq)$ is equipped with an action by $\Ksch{G}(\Kq)$ which we indicate by
\begin{eqnarray*}
\Ksch{G}(\Kq) \times I(\Ksch{G},\Kq) &\to& I(\Ksch{G},\Kq)\\
    (g,x) &\mapsto& gx.
\end{eqnarray*}
Let \cdef{$\tKq$}  be the maximal tamely ramified extension of $\Kq$ in $\KKq$.
Suppose $g\in \Ksch{G}(\tKq)$. Then $g\in \Ksch{G}(\Lq)$ for some finite, tamely ramified Galois extension $\Lq/\Kq$. Then $\Gal(\Lq/\Kq)$ acts on $I(\Lsch{G},\Lq)$ and $I(\Ksch{G},\Kq) = I(\Lsch{G},\Lq)^{\Gal(\Lq/\Kq)}$ (\cf \cite{P}). 

\begin{theorem}\label{theorem: conjugation}
Fix $x\in I(\Ksch{G},\Kq)$ and $g\in \Ksch{G}(\tKq)$; suppose $gx \in I(\Ksch{G},\Kq)$.
There is a canonical isomorphism $\rkkSch{m(g^{-1})}{gx}: \rkkSch{G}{gx} \to \rkkSch{G}{x}$
and if $\fais{F}$ is an equivariant perverse sheaf on $\KKsch{G}$, then
\[
(\rkkSch{m(g^{-1})}{gx})^\ast\ \RES{G}{x} \fais{F} \iso \RES{G}{gx}\ \fais{F}.
\]
\end{theorem}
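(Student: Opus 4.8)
The plan is to reduce the statement to a statement about integral models connected by an isomorphism over $\Rq$ (or rather over the ring of integers of a tamely ramified extension), and then transport the whole diagram~\eqref{diagram: models} through that isomorphism, finally using the equivariance structure on $\fais{F}$ to trivialize the pullback. First I would record the building-theoretic input: since $g\in\Ksch{G}(\tKq)$, choose a finite tamely ramified Galois extension $\Lq/\Kq$ with $g\in\Ksch{G}(\Lq)$, so that $g$ translates $x$ to $gx$ inside $I(\Lsch{G},\Lq)$, and both $x$ and $gx$ lie in the fixed subbuilding $I(\Ksch{G},\Kq)=I(\Lsch{G},\Lq)^{\Gal(\Lq/\Kq)}$ by~\cite{P}. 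Conjugation by $g$ is an isomorphism of algebraic groups $\Ksch{G}_{\Lq}\to\Ksch{G}_{\Lq}$ carrying the parahoric $\Lsch{G}(\Lq)_x$ onto $\Lsch{G}(\Lq)_{gx}$, hence by the functoriality of Bruhat--Tits integral models (\cf~\cite[\S7]{Yu}, \cite[6.2]{Lan1}) it extends uniquely to an isomorphism of smooth affine $\RLq$-group schemes $\RLBSch{G}{x}\to\RLBSch{G}{gx}$ lifting $\Int(g)$ on the generic fibre. By Lemma~\ref{lemma: unramified model} — or rather its tamely ramified analogue, which is the étale/tame descent argument of~\cite[2.4]{Yu},~\cite[10.9]{Lan1} — the base changes of $\RSch{G}{x}$ and $\RSch{G}{gx}$ to $\RLq$ agree with $\RLBSch{G}{x}$ and $\RLBSch{G}{gx}$; but for the statement we only need everything after base change to $\RKK$, where the extension $\Lq/\Kq$ disappears, so it suffices to produce the isomorphism of $\RKK$-group schemes $\RRSch{G}{x}\to\RRSch{G}{gx}$ lifting $\Int(g):\KKsch{G}\to\KKsch{G}$.

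Next I would push this isomorphism through the reductive-quotient construction. An isomorphism of smooth connected affine group schemes over $\RKK$ restricts to an isomorphism of special fibres $\kkSch{G}{x}\to\kkSch{G}{gx}$, which carries the unipotent radical to the unipotent radical and hence descends to a canonical isomorphism $\rkkSch{G}{x}\to\rkkSch{G}{gx}$ of the maximal reductive quotients; its inverse is the map $\rkkSch{m(g^{-1})}{gx}:\rkkSch{G}{gx}\to\rkkSch{G}{x}$ named in the statement. (The direction of the arrow and the notation are chosen so that on $\kq$-points it induces $h\mapsto g^{-1}hg$ on the relevant parahoric quotients.) Thus we have a commuting cube: the two copies of diagram~\eqref{diagram: models}, for $x$ and for $gx$, joined by the vertical isomorphisms $\Int(g)$ on $\KKsch{G}$, on $\RRSch{G}{\cdot}$, on $\kkSch{G}{\cdot}$, and on $\rkkSch{G}{\cdot}$, all compatible with the structure maps to $\sch{\bar S}$. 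Because the nearby cycles functor is defined purely from the diagram $j,i$ over a fixed trait and commutes with pullback along an isomorphism of traits-over-$\sch{\bar S}$, and because $\nu_!$ and the Tate twist are likewise functorial, we get from this cube a canonical isomorphism of functors $D^b_c(\KKsch{G},\EE)\to D^b_c(\rkkSch{G}{gx},\EE)$,
\[
(\rkkSch{m(g^{-1})}{gx})^*\,\RES{G}{x}\,\iso\,\RES{G}{gx}\circ\Int(g)^*.
\]

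The last step is to kill the residual $\Int(g)^*$ using the hypothesis that $\fais{F}$ is equivariant. Conjugation $\Int(g):\KKsch{G}\to\KKsch{G}$ is precisely the map $\Ksch{m}(h^{-1})\vert_{\bar\eta}$ in the notation of~\eqref{equation: iii reductive} with $h=g^{-1}$, so the essentially-unique equivariance isomorphism of $\fais{F}$ supplies $\mu_{\fais{F}}(g):\Int(g)^*\fais{F}\to\fais{F}$, whence $\RES{G}{gx}\,\Int(g)^*\fais{F}\iso\RES{G}{gx}\,\fais{F}$. Composing with the functorial isomorphism above yields $(\rkkSch{m(g^{-1})}{gx})^*\,\RES{G}{x}\fais{F}\iso\RES{G}{gx}\fais{F}$, as claimed. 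The main obstacle is the first step: making precise, in the tamely ramified case, that the parahoric integral models really do satisfy the expected functoriality under $\Int(g)$ and compatibility with base change to $\RKK$ (i.e. the tame analogue of Lemma~\ref{lemma: unramified model}); once the isomorphism of $\RKK$-group schemes lifting $\Int(g)$ is in hand, the remaining steps are formal diagram-chasing with the base-change and nearby-cycles compatibilities recalled in Subsections~\ref{item: PNC_*}--\ref{item: FNC_!} together with the essential uniqueness of the equivariance datum from~\cite[\S0]{L0}. One should also check that the isomorphism produced is independent of the auxiliary choice of $\Lq$, which follows from the uniqueness clauses for both the integral model extension and the equivariance isomorphism.
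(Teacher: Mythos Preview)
Your proposal is correct and follows essentially the same route as the paper: extend $\Int(g)$ to an isomorphism of the smooth integral models over the ring of integers of a finite tame extension, pass to special fibres and reductive quotients, transport the nearby-cycles/$\nu_!$ diagram through this isomorphism, and finally absorb the residual $\Int(g)^*$ using the equivariance datum $\mu_{\fais{F}}(g)$ as in~\eqref{equation: iii reductive}. The only cosmetic differences are that the paper invokes the Extension Principle of~\cite[2.3]{Yu} (equivalently~\cite[Prop.~1.7.6]{BT2}) rather than speaking of ``functoriality of Bruhat--Tits models'', and it makes the two base-change steps explicit: smooth base change~(\S\ref{item: SNC^*}) for the commutation of $\NC{}$ with pullback along the isomorphism, and proper base change~(\S\ref{item: PBC}) applied to the Cartesian square~\eqref{equation: c&vc} to move the pullback past ${\nu_{\RRSch{G}{gx}}}_!$; your phrase ``commutes with pullback along an isomorphism'' packages both of these at once.
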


\begin{proof}
We begin by defining $\rkkSch{m(g^{-1})}{gx}: \rkkSch{G}{gx} \to \rkkSch{G}{x}$.
Let $\KKsch{m} : \KKsch{G}\times \KKsch{G} \to \KKsch{G}$ be the morphism obtained from $\Ksch{m}$ (introduced just before the statement of Theorem~\ref{theorem: conjugation}) by extending scalars from $\mathcal{O}_S(\eta)=\Kq$ to $\mathcal{O}_{\bar S}({\bar \eta})=\KKq$. For each $g\in \Ksch{G}(\KKq)$, let \cdef{$\KKsch{m(g)} : \KKsch{G} \to \KKsch{G}$} be the morphism given by $\KKsch{m(g)}(h) = \KKsch{m}(g,h)$ for $h \in \Ksch{G}(\KKq)$.

As above, let $\Lq/\Kq$ be a finite, tamely ramified, Galois extension 
such that $g\in \Ksch{G}(\Lq)$.
By the Extension Principle \cite[2.3]{Yu}\footnote{The proof of the Extension Principle of \cite[2.3]{Yu} is made using \cite[1.7. Sch\'emas \'etoff\'es]{BT2}; in fact, it is almost exactly \cite[Prop.~1.7.6]{BT2}.}, the isomorphism $\Lsch{m(g)}:\Lsch{G} \to \Lsch{G}$ of group schemes over $\Lq$ extends uniquely to an isomorphism 
$$
\RLSch{m(g)}{x}: \RLSch{G}{x} \to \RLSch{G}{gx}
$$ 
of group schemes over $\RLq$. 
The Extension Principle
is applicable because $\RLSch{G}{x}$ and $\RLSch{G}{gx}$ are {\em smooth} integral models of $\Lsch{G}$ and $\Lsch{m(g)}((\RLSch{G}{x})(\RLq))= (\RLSch{G}{gx})(\RLq)$. 

By base extension, $\RLSch{m(g)}{x}$ extends to an isomorphism 
$\RRSch{m(g)}{x}: \RRSch{G}{x} \to \RRSch{G}{gx}$.
Restricting to special fibres and thence to reductive quotients, the isomorphism $\RRSch{m(g)}{x}$ 
defines an isomorphism \cdef{$\rkkSch{m(g)}{x}: \rkkSch{G}{x} \to \rkkSch{G}{gx}$}. The diagram 
\begin{equation}\label{equation: c&vc}
	\xymatrix{
	\ar[d]^{\nu_{\RRSch{G}{x}}} \kkSch{G}{x} \ar[rr]^{\kkSch{m(g)}{x}} && \ar[d]^{\nu_{\RRSch{G}{gx}}} \kkSch{G}{gx} \\
	\rkkSch{G}{x} \ar[rr]^{\rkkSch{m(g)}{x}} && \rkkSch{G}{gx} 
	}
\end{equation}
is Cartesian.

Because $\fais{F}$ is $\KKsch{G}$-equivariant, it comes equipped with an (essentially unique) isomorphism $\mu_\fais{F} : \KKsch{m}^*\fais{F} \to \KKsch{\proj}^* \fais{F}$ in $D^b_c(\KKsch{G}\times \KKsch{G}, \EE)$, where $\Ksch{m} : \Ksch{G}\times \Ksch{G} \to \Ksch{G}$ is conjugation (\cf above) and $\Ksch{\proj}: \Ksch{G}\times \Ksch{G} \to \Ksch{G}$ is projection onto the second component (\cf Subsection~\ref{subsection: equivariant perverse sheaves}). Let
\begin{equation}\label{equation: equivariance at g}
\cdef{\mu_\fais{F}(g) : {\KKsch{m(g^{-1})}}^*\ \fais{F} \to \fais{F}}
\end{equation}
be the isomorphism in $D^b_c(\KKsch{G}, \EE)$ defined according to \eqref{equation: iii reductive}. Consider 
the diagram below.
\begin{equation}
\xymatrix{
\Ksch{G}_{\KKq} \ar@{=}[r] & \KKSch{G}{x} \ar[r]^{j_{\RRSch{G}{x}}} & \RRSch{G}{x} && \ar[ll]_{i_{\RRSch{G}{x}}} \kkSch{G}{x} \ar[rr]^{\nu_{\RSch{G}{x}}} && \rkkSch{G}{x} \\
\ar[u]^{\Ksch{m}_{\KKq}(g^{-1})} \KKsch{G} \ar@{=}[r] &  \KKSch{G}{gx} \ar[u]
\ar[r]_{j_{\RRSch{G}{gx}}} & \ar[u]^{\RRSch{m(g^{-1})}{gx}} \RRSch{G}{gx} && \ar[ll]^{i_{\RRSch{G}{gx}}} \ar[u]^{\kkSch{m(g^{-1})}{gx}} \kkSch{G}{gx} \ar[rr]_{\nu_{\RRSch{G}{gx}}} && \ar[u]^{\rkkSch{m(g^{-1})}{gx}} \rkkSch{G}{gx} \\
}
\end{equation}
The map 
\begin{equation}
\label{equation : first isomorphism - Hadi}
\NC{\RRSch{G}{gx}}\ \mu_\fais{F}(g):
\NC{\RRSch{G}{gx}}\ {\KKsch{m(g^{-1})}}^*\ \fais{F}\to
\NC{\RRSch{G}{gx}}\ \fais{F}
\end{equation}
is an isomorphism.
The morphism $\RSch{m(g^{-1})}{gx}$ is smooth, 
and smooth base change (Subsection \ref{item: SNC^*}) implies
\begin{equation}
\label{equation : second isomorphism - Hadi}
{\kkSch{m(g^{-1})}{gx}}^*\ \NC{\RRSch{G}{x}}\iso
\NC{\RRSch{G}{gx}}\ {\KKsch{m(g^{-1})}}^*\ \fais{F}.
\end{equation}
From \eqref{equation : first isomorphism - Hadi},
\eqref{equation : second isomorphism - Hadi}, and 
proper base change (Subsection \ref{item: PBC})
applied to the Cartesian diagram \eqref{equation: c&vc},
it follows that
\begin{equation*}
\begin{aligned}
&{\nu_{\RRSch{G}{gx}}}_!\ (\dim \nu_{\RRSch{G}{gx}}/2)\
\NC{\RRSch{G}{gx}}\ \fais{F} \\
&\iso
{\nu_{\RRSch{G}{gx}}}_!\ (\dim \nu_{\RRSch{G}{gx}}/2)\ {\kkSch{m(g^{-1})}{gx}}^*\ \NC{\RRSch{G}{x}}\fais{F}\\
&\iso
(\rkkSch{m(g^{-1})}{gx})^\ast\ {\nu_{\RRSch{G}{x}}}_!\ (\dim \nu_{\RRSch{G}{x}}/2)\ \NC{\RRSch{G}{x}}\ \fais{F}.
\end{aligned}
\end{equation*}
(Here we also used the fact that $\dim \nu_{\RRSch{G}{gx}} = \dim \nu_{\RRSch{G}{x}}$; \cf Definition~\ref{definition: the functor}.)
This concludes the proof of Theorem~\ref{theorem: conjugation}.
\end{proof} 

\subsection{Parabolic Induction and Nearby Cycles}\label{subsection: induction and nearby cycles}


In this subsection we prove Theorem~\ref{theorem: induction} which shows that if 
$\fais{F}$ is induced (in the sense of Subsection~\ref{subsection: induction}) from a character sheaf $\fais{G}$ of a maximal torus $\KKsch{T}$ defined over $\Kq$ and split over an unramified extension of $\Kq$, then we can determine $\RESFAIS{F}{G}{x}$ whenever $x\in I(\Ksch{G},\Kq)$ 
is a (poly)vertex in the image of $I(\Ksch{T},\Kq) \hookrightarrow I(\Ksch{G},\Kq)$. Note that the latter
map is defined by passing to an unramified splitting extension $\Lq$ of $\Kq$ for $T$; \ie
$$
I(\Ksch{T},\Kq)=I(\Ksch{T}_\Lq,\Lq)^{\mathrm{Gal}(\Lq/\Kq)}\hookrightarrow 
I(\Ksch{G}_\Lq,\Lq)^{\mathrm{Gal}(\Lq/\Kq)}=I(\Ksch{G},\Kq).
$$
First we state and prove the result for the case $T$ is split, and then show
how to extend it to the general case.

\begin{theorem}\label{theorem: induction}
Let $\Ksch{T}$ be a maximal split torus in $\Ksch{G}$, let $\LBsch{B}$ be a Borel subgroup of
$G$ with Levi factor $\Ksch{T}$. Suppose $(x)$ is a (poly)vertex in $I(\Ksch{T},\Kq)$ and that $(x)$ is a hyperspecial (poly)vertex in $I(\Ksch{G},\Kq)$. 
(With abuse of 
notation, the image of $x$
under the map $I(\Ksch{T},\Kq) \hookrightarrow I(\Ksch{G},\Kq)$ is denoted by $x$ as well.) 
Then there is a smooth integral model $\RLBSch{B}{x}$ for $\LBsch{B}$ such that 
$\RLBSch{B}{x}(\Kq) = \LBsch{B}(\Kq) \cap \RLBSch{G}{x}(\Kq)$, and  
the special fibre $\llBSch{B}{x}$ of $\RLBSch{B}{x}$ is a Borel subgroup of $\llBSch{G}{x}$.
Moreover, 
\[
\NC{\RRSch{G}{x}}\ \ind^{\KKsch{G}}_{\LLBsch{B}}\ \fais{G} \iso \ind^{\kkSch{G}{x}}_{\llSch{B}{x}}\ \NC{\RRSch{T}{x}}\ \fais{G},
\]
for every character sheaf $\fais{G}$ of $\KKsch{T}$.
Consequently,
\[
\RES{G}{x}\ \ind^{\KKsch{G}}_{\LLBsch{B}}\ \fais{G} \iso 
\ind^{\kkSch{G}{x}}_{\llSch{B}{x}}\ \RES{T}{x}\ \fais{G}.
\]
\end{theorem}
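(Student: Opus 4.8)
The plan is to reduce the statement to the commutation of nearby cycles with the three operations that make up parabolic induction---pullback along the projection $\alpha_\sch{P}$, the principal-fibration pushforward $(\beta_\sch{P})_\#$, and proper pushforward along $\gamma_\sch{P}$---for a suitable family of integral models of the induction diagram over the trait $\sch{S}$. First I would construct the integral model $\RLBSch{B}{x}$ for $\LBsch{B}$: since $(x)$ is a hyperspecial (poly)vertex the Bruhat-Tits group scheme $\RRSch{G}{x}$ is reductive over $\Rq$, so $\RSch{G}{x} = \rkSch{G}{x}$ and $\dim\nu_{\RRSch{G}{x}}=0$; the scheme-theoretic closure of $\LBsch{B}$ inside $\RLBSch{G}{x}$, or equivalently the group scheme attached to $x$ regarded as a point of $I(\Ksch{T},\Kq)$ together with the root datum, provides a smooth affine $\Rq$-model $\RLBSch{B}{x}$ whose special fibre $\llBSch{B}{x}$ is a Borel subgroup of $\llBSch{G}{x}$ with Levi factor $\llBSch{T}{x}$, where the latter is the special fibre of the (smooth, with connected reductive special fibre by unramifiedness) model $\RRSch{T}{x}$. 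This is the content of the first sentence of the theorem and is essentially \cite[\S7]{Yu} together with \cite{Lan1}; I would state it as a lemma and cite those references rather than reprove it.

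Next I would assemble integral models of the varieties $\sch{X}_\sch{P}$ and $\sch{Y}_\sch{P}$ from \eqref{equation: induction varieties}, with $\sch{P}=\LBsch{B}$ and $\sch{G}=\KKsch{G}$, over the trait: concretely, $\RLBSch{X}{x} \ceq \{(g,h)\in \RLBSch{G}{x}\times\RLBSch{G}{x} \tq h^{-1}gh\in\RLBSch{B}{x}\}$ and the associated quotient $\RLBSch{Y}{x} = \RLBSch{X}{x}/\RLBSch{B}{x}$, which is proper over $\RLBSch{G}{x}$ because $\RLBSch{G}{x}/\RLBSch{B}{x}$ is a smooth projective $\Rq$-scheme (the integral model of the flag variety --- this is the place where ``a few words about integral models for flag varieties'', promised in the introduction, are needed). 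The morphisms $\alpha$, $\beta$, $\gamma$ all spread out over $\Rq$; the generic fibres recover the diagram defining $\ind^{\KKsch{G}}_{\LLBsch{B}}$ and the special fibres recover the diagram defining $\ind^{\kkSch{G}{x}}_{\llSch{B}{x}}$, with the Tate-twist/shift bookkeeping matching because $\dim\KKsch{G}+\dim\pi_{\LLBsch{B}} = \dim\kkSch{G}{x}+\dim\pi_{\llSch{B}{x}}$ (the dimensions of group and flag variety are locally constant in the family). Then: $\NC{}$ commutes with ${\alpha_{\bar\eta}}^*$ by smooth base change for nearby cycles (Subsection~\ref{item: SNC^*}), since $\alpha$ is smooth; $\NC{}$ commutes with $(\beta_{\bar\eta})_\#$ by Proposition~\ref{proposition: NCe}, applied to the smooth morphism $\RLBSch{X}{x}\to\RLBSch{Y}{x}$, whose generic and special fibres are principal $\RLBSch{B}{}$-fibrations of equal dimension; and $\NC{}$ commutes with ${\gamma_{\bar\eta}}_! = {\gamma_{\bar\eta}}_*$ by proper base change for nearby cycles (Subsection~\ref{item: FNC_!}), since $\gamma$ is proper. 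Composing these three isomorphisms and tracking the shift/twist gives $\NC{\RRSch{G}{x}}\ \ind^{\KKsch{G}}_{\LLBsch{B}}\ \fais{G} \iso \ind^{\kkSch{G}{x}}_{\llSch{B}{x}}\ \NC{\RRSch{T}{x}}\ \fais{G}$. The final ``Consequently'' follows immediately: since $(x)$ is hyperspecial, $\nu_{\RRSch{G}{x}}$ is the identity and $\dim\nu_{\RRSch{G}{x}}=0$, so $\RES{G}{x} = \NC{\RRSch{G}{x}}$ and likewise $\RES{T}{x}=\NC{\RRSch{T}{x}}$ (the torus model $\RRSch{T}{x}$ also has reductive special fibre), whence the second displayed isomorphism is the first one verbatim.

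The main obstacle is the middle step: verifying that $\RLBSch{X}{x}\to\RLBSch{Y}{x}$ genuinely satisfies the hypotheses of Proposition~\ref{proposition: NCe}---that it is smooth as an $\Rq$-morphism and that both fibres are principal fibrations with the \emph{same} connected group of the \emph{same} dimension. Smoothness of the quotient and the principal-fibration structure on each fibre follow from $\RLBSch{B}{x}$ being smooth affine with connected fibres acting freely, but one must check freeness integrally and confirm that fppf-quotients exist in this setting; I would handle this by realizing $\RLBSch{Y}{x}$ as a closed subscheme of $\RLBSch{G}{x}\times(\RLBSch{G}{x}/\RLBSch{B}{x})$ and invoking smoothness of the flag-variety model. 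A secondary subtlety is that $\ind^{\KKsch{G}}_{\LLBsch{B}}\ \fais{G}$ and $\ind^{\kkSch{G}{x}}_{\llSch{B}{x}}\ \NC{\RRSch{T}{x}}\ \fais{G}$ are defined via $(\beta)_\#$ on perverse sheaves, so one must know that $\NC{}$ carries the equivariant perverse sheaf ${\alpha_{\bar\eta}}^*\fais{G}[\cdots]$ to an equivariant perverse sheaf of the same kind on the special fibre---this is exactly \eqref{equation: EPS} together with $\NC{}$ being perverse $t$-exact here (a point noted in the excerpt as ``with a bit of work, it follows from \cite[\S4.4]{BBD}''), and is what makes Proposition~\ref{proposition: NCe} applicable at all. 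I expect the clean way to present this is to first record the integral induction diagram as a lemma, then prove the displayed isomorphism by the three-step base-change computation above, and finally dispatch the corollary in one line using $\dim\nu_{\RRSch{G}{x}}=0$.
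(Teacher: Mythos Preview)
Your proposal is correct and follows essentially the same approach as the paper. The paper organizes the supporting facts you identify as ``a lemma'' and ``a few words about integral models for flag varieties'' into three explicit lemmas (the smooth model $\RLBSch{B}{x}$ via schematic closure, the integral flag variety $\RLBSch{G}{x}/\RLBSch{B}{x}$ built from the Bruhat decomposition over $\sch{S}$, and the verification that $\RLBSch{\beta}{x}$ is a smooth principal fibration with the required fibre properties), and then runs exactly your three-step computation---proper base change for $\gamma$, Proposition~\ref{proposition: NCe} for $\beta$, smooth base change for $\alpha$---before dispatching the ``Consequently'' in one line via $\nu_{\RRSch{G}{x}}=\id$ and $\nu_{\RRSch{T}{x}}=\id$ at a hyperspecial point.
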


The proof of Theorem~\ref{theorem: induction} requires Lemmas~\ref{lemma: borel}, \ref{lemma: flag} and \ref{lemma: fibration} which will be given in Subsubsection~\ref{subsubsection: flag}. 

\subsubsection{Integral Models for Flag Varieties}\label{subsubsection: flag}

We suspect Lemmas~\ref{lemma: borel}, \ref{lemma: flag} and \ref{lemma: fibration} are well-known, but include them with proofs for completeness.



\begin{lemma}\label{lemma: borel}
The schematic closure \cdef{$\RLBSch{B}{x}$} of $\LBsch{B}$ in $\RLBSch{G}{x}$ 
is the unique smooth integral model (over $\sch{S}$) of $\LBsch{B}$  such that 
$\RLBSch{B}{x}(\Rq) = \LBsch{B}(\Kq) \cap \RLBSch{G}{x}(\Rq)$. Moreover, 
$\RLBSch{B}{x}$ is a closed subscheme of $\RLBSch{G}{x}$.
\end{lemma}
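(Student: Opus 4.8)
The plan is to establish three things in turn: that $\RLBSch{B}{x}$ (the schematic closure) is smooth over $\sch{S}$, that it has the stated $\Rq$-points, and that any two smooth integral models with those $\Rq$-points coincide (so the model is unique); the closed-subscheme assertion is immediate from the definition of schematic closure.

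First I would address the smoothness. Since $\sch{S}$ is a trait (a Dedekind scheme of dimension $\leq 1$) and $\RLBSch{B}{x}$ is the schematic closure of $\LBsch{B}\subset \KKsch{G}$ inside $\RLBSch{G}{x}$, it is automatically $\Rq$-flat with generic fibre $\LBsch{B}$. So the only issue is smoothness of the special fibre, i.e.\ that the closed fibre $\llBSch{B}{x}$ has the expected dimension $\dim\LBsch{B}$ and is reduced/smooth. This is where I expect the main obstacle to lie, and I would handle it by exploiting the hyperspecial hypothesis on $(x)$: when $(x)$ is hyperspecial, $\RLBSch{G}{x}$ is a reductive group scheme over $\Rq$ in the sense of SGA3, so $\kSch{G}{x}=\rkSch{G}{x}$ is reductive with the same root datum as $\LBsch{G}$, and the scheme of Borel subgroups of $\RLBSch{G}{x}$ is smooth and proper over $\Rq$ (SGA3, Exp.~XXII). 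The schematic closure of $\LBsch{B}$ is then cut out by the $\Rq$-point of this Borel scheme determined by $\LBsch{B}$ (which extends because the Borel scheme is proper, hence $\RLBSch{G}{x}(\Kq)$-points spread out), and pulling back the universal Borel subgroup of $\RLBSch{G}{x}$ along that section yields a smooth closed subgroup scheme over $\Rq$ whose generic fibre is $\LBsch{B}$; by flatness and uniqueness of schematic closure this is $\RLBSch{B}{x}$. In particular $\llBSch{B}{x}$ is a Borel subgroup of $\kSch{G}{x}=\rkSch{G}{x}$, which is the statement needed later in Theorem~\ref{theorem: induction}.

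Next, the $\Rq$-points. The inclusion $\RLBSch{B}{x}(\Rq)\subseteq \LBsch{B}(\Kq)\cap\RLBSch{G}{x}(\Rq)$ holds since $\RLBSch{B}{x}$ is a closed subscheme of $\RLBSch{G}{x}$ with generic fibre $\LBsch{B}$. For the reverse inclusion: given $g\in \LBsch{B}(\Kq)\cap\RLBSch{G}{x}(\Rq)$, the section $\Spec\Rq\to\RLBSch{G}{x}$ it defines lands, on the generic point, inside $\LBsch{B}$; since $\RLBSch{B}{x}$ is the schematic closure of $\LBsch{B}$ and $\Spec\Rq$ is reduced and irreducible, this section factors through $\RLBSch{B}{x}$ (a section hitting the dense open generic fibre of a closed flat subscheme factors through it), so $g\in\RLBSch{B}{x}(\Rq)$.

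Finally, uniqueness: suppose $\RLBSch{B}{x}'$ is another smooth integral model of $\LBsch{B}$ over $\sch{S}$ with $\RLBSch{B}{x}'(\Rq)=\LBsch{B}(\Kq)\cap\RLBSch{G}{x}(\Rq)$. Both $\RLBSch{B}{x}$ and $\RLBSch{B}{x}'$ are smooth and share the generic fibre $\LBsch{B}$ and the same $\Rq$-points; I would invoke the Extension Principle (\cite[2.3]{Yu}, quoted already in the proof of Theorem~\ref{theorem: conjugation}): the identity map on generic fibres extends uniquely to a morphism of smooth integral models once the $\Rq$-points match, and applying it in both directions gives mutually inverse isomorphisms $\RLBSch{B}{x}\iso\RLBSch{B}{x}'$. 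This yields uniqueness and completes the argument; the embedding $\RLBSch{B}{x}\hookrightarrow\RLBSch{G}{x}$ as a closed subscheme is part of the construction via schematic closure.
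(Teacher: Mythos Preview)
Your argument is correct, and it proceeds along a genuinely different line from the paper's proof. The paper establishes smoothness of $\RLBSch{B}{x}$ by invoking the structure theory of Bruhat--Tits integral models: by \cite[\S7, Theorem]{Yu}, the schematic closure $\RLBSch{B}{x}$ decomposes as a product $\RLBSch{T}{x}\times\RLBSch{U_B}{x}$ over $\sch{S}$, where $\RLBSch{U_B}{x}$ is assembled from the smooth integral models $\RLBSch{U_a}{x}$ of the root subgroups for positive roots; smoothness then follows because each factor is smooth. Your approach instead exploits the hyperspecial hypothesis directly: since $\RLBSch{G}{x}$ is a reductive group scheme in the sense of SGA3, the scheme of Borels is smooth and proper, so the $\Kq$-point corresponding to $\LBsch{B}$ spreads out and the resulting Borel subgroup scheme over $\Rq$ must coincide with the schematic closure. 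This is more conceptual and has the pleasant side-effect of showing immediately that $\llBSch{B}{x}$ is a Borel subgroup of $\kkSch{G}{x}$, which the paper only obtains later in Lemma~\ref{lemma: flag}. On the other hand, the paper's argument via root-subgroup models is closer to the Bruhat--Tits framework already in play and would generalise beyond the hyperspecial case (though that is not needed here). Your treatment of the $\Rq$-points and of uniqueness via the Extension Principle \cite[2.3]{Yu} is more explicit than the paper's, which simply cites \cite[\S2.6, Lemma]{Yu} for both.
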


\begin{proof}
Observe that $\LBsch{B}$ is a closed subscheme of $\LBsch{G}$. By definition (\cf \cite[\S 2.6]{Yu}, for example) the schematic closure $\RLBSch{B}{x}$ of $\LBsch{B}$ in $\RLBSch{G}{x}$ is the smallest closed sub-scheme of $\RLBSch{G}{x}$ containing $\LBsch{B}$. By \cite[\S 2.6, Lemma]{Yu}, $\RLBSch{B}{x}$ is a model of $\LBsch{B}$ and $\RLBSch{B}{x}$ is a subscheme of $\RLBSch{G}{x}$. Moreover, $\LBSch{B}{x}$ is unique with the property $\RLBSch{B'}{x}(\RLq) = \LBsch{B}(\Lq) \cap \RLBSch{G}{x}(\Rq)$. 
To see why $\RLBSch{B}{x}$ is smooth, use 
\cite[\S 7, Theorem]{Yu} to see that
$\RLBSch{B}{x}$ is isomorphic (as a scheme over $\sch{S}$) to $\RLBSch{T}{x} \times \RLBSch{U_{B}}{x}$,
where $\RLBSch{U_{B}}{x}$ is the image of $\prod_{a\in {\Phi(\LBsch{G},\LBsch{T})}^+} \RLBSch{U_a}{x}$
under multiplication. Here $\Phi(\LBsch{G},\LBsch{T})^+$ is the set of positive roots 
of $\LBsch{G}$ with respect to $\LBsch{T}$ and \cdef{$\RLBSch{U_a}{x}$} is the unique smooth integral model of the root subgroup $\LBsch{U}_a \subset \LBsch{G}$ such that  $\RLBSch{U}{a}(\Rq) = \LBsch{U}_a(\Kq)_{x,0}$ (\cf \cite[\S4.3]{BT2}). Since $\RLBSch{T}{x}$ and 
$\RLBSch{U_{B}}{x}$ are smooth, and since the product is taken over $\sch{S}$, 
it follows that $\RLBSch{B}{x}$ is also smooth. 

\end{proof}

\begin{lemma}\label{lemma: flag}
There is a smooth principal fibration 
$\RLBSch{G}{x}\to \RLBSch{G}{x}/\RLBSch{B}{x}$ with group $\RLBSch{B}{x}$. Moreover, 
the generic fibre of $\RLBSch{G}{x}/\RLBSch{B}{x}$ is 
$\LBsch{G}/\LBsch{B}$, and its special
fibre is $\kSch{G}{x}/\kSch{B}{x}$, which is the flag variety of $\kSch{G}{x}$.
\end{lemma}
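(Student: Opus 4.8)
The plan is to recognise $\RLBSch{B}{x}$ as a Borel subgroup scheme of $\RLBSch{G}{x}$ and then to appeal to the theory of reductive group schemes over a Dedekind base. First I would use that, as in Theorem~\ref{theorem: induction}, the facet $(x)$ is a hyperspecial (poly)vertex of $I(\Ksch{G},\Kq)$, so that the parahoric group scheme $\RLBSch{G}{x}$ is in fact a \emph{reductive} group scheme over $\Rq$ (\cf \cite[\S7]{Yu}, \cite{BT2}); in particular $\kSch{T}{x}$ is a maximal torus of the connected reductive $\kq$-group $\kSch{G}{x}$. By Lemma~\ref{lemma: borel}, $\RLBSch{B}{x}$ is a smooth \emph{closed} subgroup scheme of $\RLBSch{G}{x}$, it is a model of $\LBsch{B}$, and $\RLBSch{B}{x}\iso \RLBSch{T}{x}\times \RLBSch{U_{B}}{x}$ as $\sch{S}$-schemes. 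To see that $\RLBSch{B}{x}$ is a Borel subgroup scheme of $\RLBSch{G}{x}$ in the sense of \cite[Exp.~XXII]{SGA3} it suffices to check that its geometric fibres are Borel subgroups: over $\eta$ this is clear since $(\RLBSch{B}{x})_\eta=\LBsch{B}$ is a Borel of $(\RLBSch{G}{x})_\eta=\LBsch{G}$; over ${\bf s}$, the special fibre of $\RLBSch{U_{B}}{x}$ is, by the functoriality of the Bruhat--Tits schematic root datum (\cite{BT2}, \cite[\S7]{Yu}, \cite[\S\S6,9]{Lan1}), the unipotent radical of the Borel of $\kSch{G}{x}$ attached to $\kSch{T}{x}$ and to the chosen system of positive roots, so that $\kSch{B}{x}=\kSch{T}{x}\cdot(\RLBSch{U_{B}}{x})_{\bf s}$ is precisely that Borel.

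Granting this, I would quote \cite[Exp.~XXII]{SGA3}: for the reductive $\sch{S}$-group scheme $\RLBSch{G}{x}$ and its Borel subgroup scheme $\RLBSch{B}{x}$, the fppf quotient $\RLBSch{G}{x}/\RLBSch{B}{x}$ is representable by a smooth, projective $\sch{S}$-scheme, and the quotient morphism $\RLBSch{G}{x}\to\RLBSch{G}{x}/\RLBSch{B}{x}$ is a $\RLBSch{B}{x}$-torsor which is locally trivial for the Zariski topology. In particular this morphism is smooth, hence it is a smooth principal fibration with group $\RLBSch{B}{x}$, which is the first assertion of the lemma. (Zariski-local triviality can also be seen directly, since $\RLBSch{B}{x}$ is a semidirect product of the split torus $\RLBSch{T}{x}$ with the successively extended unipotent group $\RLBSch{U_{B}}{x}$ and is therefore special over $\sch{S}$; and if one wanted to dispense with the hyperspecial hypothesis at this point, representability of $\RLBSch{G}{x}/\RLBSch{B}{x}$ for $\RLBSch{G}{x}$ merely smooth affine and $\RLBSch{B}{x}$ smooth and closed over the Dedekind scheme $\sch{S}$ is covered by Anantharaman's work on homogeneous spaces over bases of dimension one.)

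For the last two assertions I would use that the formation of $\RLBSch{G}{x}/\RLBSch{B}{x}$ commutes with base change along $\sch{S}$ --- the quotient fppf sheaf does, and representability is stable under base change. Base-changing to $\eta$ identifies the generic fibre with $(\RLBSch{G}{x})_\eta/(\RLBSch{B}{x})_\eta=\LBsch{G}/\LBsch{B}$, the flag variety of $\LBsch{G}$; base-changing to ${\bf s}$ identifies the special fibre with $\kSch{G}{x}/\kSch{B}{x}$, which by the first paragraph is the quotient of the connected reductive group $\kSch{G}{x}$ by a Borel subgroup, i.e.\ the flag variety of $\kSch{G}{x}$.

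I expect the only step which is not pure formalism to be the claim in the first paragraph that $\kSch{B}{x}$ is a Borel subgroup of $\kSch{G}{x}$ --- equivalently, that the schematic closure in $\RLBSch{G}{x}$ of the root groups $\LBsch{U}_a$ of $\LBsch{G}$ reduces, on the special fibre, to the root groups of $\kSch{G}{x}$. This is exactly where the hyperspecial hypothesis and the structure theory of parahoric group schemes (\cite{BT2}, \cite{Yu}, \cite{Lan1}) enter; once it is in hand, the existence of the quotient and the identification of its two fibres are routine applications of the standard formalism for (reductive) group schemes over a Dedekind base, which I would not spell out.
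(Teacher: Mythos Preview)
Your argument is correct, and it takes a genuinely different route from the paper's own proof. The paper proceeds by hand: it sets up an integral Bruhat decomposition for $\RLBSch{G}{x}$, writing down the cells $\RLBSch{G_w}{x} = \RLBSch{U_w}{x}\,\dot{w}\,\RLBSch{B}{x}\iso \AA^{l(w)}_\sch{S}\times\RLBSch{B}{x}$, covers $\RLBSch{G}{x}$ by conjugates of the big cell, and then \emph{constructs} $\RLBSch{G}{x}/\RLBSch{B}{x}$ directly by gluing copies of $\AA^{l(w_0)}_\sch{S}$ along the overlaps; the fibration is then locally a product projection, whence smoothness and the identification of both fibres are read off from the gluing data. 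By contrast, you observe that the hyperspecial hypothesis makes $\RLBSch{G}{x}$ a reductive $\sch{S}$-group scheme, verify (via Lemma~\ref{lemma: borel} and the reduction of the integral root groups) that $\RLBSch{B}{x}$ is a Borel subgroup scheme in the SGA3 sense, and then invoke the general theory of quotients by parabolic subgroup schemes to get representability, smoothness, Zariski-local triviality, and compatibility with base change in one stroke.

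Both approaches hinge on the same nontrivial input --- that for hyperspecial $x$ the schematic closures of the root groups $\LBsch{U}_a$ reduce to the root groups of $\kSch{G}{x}$ --- and you correctly isolate this as the substantive point. The paper's explicit cell decomposition has the advantage of being self-contained and of making the local product structure visible (which in principle could feed into Lemma~\ref{lemma: fibration}, though in fact that lemma only uses the \emph{existence} of the fibration); your approach is shorter and more conceptual but imports the SGA3 machinery, which the paper does not otherwise reference. Either is a complete proof.
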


\begin{proof}
Lemma~\ref{lemma: flag} is a consequence of the Bruhat decomposition for $\RLBSch{G}{x}$, which we now sketch.
Let \cdef{$\Phi(\LBsch{G},\LBsch{T})_{x}$} (resp. \cdef{$\Phi(\LBsch{G},\LBsch{T})_{x}^+$}) be the set of roots $a\in \Phi(\LBsch{G},\LBsch{T})$ (resp. $a\in \Phi(\LBsch{G},\LBsch{T})^+$) for which $\alpha(x)=0$ where $\alpha$ is an affine root of $\LBsch{G}$ with vector part equal to $a$. Also, let \cdef{$W_{x} = 
W(\LBsch{G},\LBsch{T})_{x}$} be the Weyl group for the root system $\Phi(\LBsch{G},\LBsch{T})_{x}$. For each $w\in W_{x}$, define \cdef{$\Phi_{x}(w)^+ \ceq \{ a \in \Phi(\LBsch{G},\LBsch{T})_{x}^+ \tq w(a) \in \Phi(\LBsch{G},\LBsch{T})_{x}^-\}$}.
Write \cdef{$\RLBSch{U_w}{x}$} for image of $\prod_{a\in \Phi_{x}(w)^+} \RLBSch{U_a}{x}$ under the multiplication map to $\RLBSch{G}{x}$
and let \cdef{$\RLBSch{G_w}{x} \subset \RLBSch{G}{x}$} be the (locally closed) subscheme $\RLBSch{U_w}{x} \dot{w} \RLBSch{B}{x}$, where $\dot{w} \in \RLBSch{G}{x}(\Rq)$ is a representative for $w$. Then $\RLBSch{U_w}{x}$ is isomorphic to $\AA^{l(w)}_\sch{S}$ and $\RLBSch{G_w}{x}$ is isomorphic to $\AA^{l(w)}_\sch{S} \times \RLBSch{B}{x}$. Let \cdef{$w_0$} be the Coxeter element in $W_{x}$ (recall that $\Phi_{x}$ is a reduced root system). Then $\RLBSch{G_{w_0}}{x}\subset \RLBSch{G}{x}$ is an open subscheme and $\RLBSch{G}{x} = \mathop{\cup}\limits_{w\in W_{x}} \dot{w} \RLBSch{G_{w_0}}{x} \dot{w}^{-1}$ is an open covering.

We can now define $\RLBSch{G}{x}/\RLBSch{B}{x}$ by gluing data, as follows. For each $w\in W_x$, let $\RLBSch{b(w)}{x}: \dot{w}\RLBSch{G_{w_0}}{x} \dot{w}^{-1} \to \AA^{l(w_0)}_{\sch{S}}$ be the obvious map (conjugate to $\RLBSch{G_{w_0}}{x}$, then use $\RLBSch{G_{w_0}}{x}\iso \AA^{l(w_0)}_\sch{S} \times \RLBSch{B}{x'}$ and finally project to $\AA^{l(w_0)}_\sch{S}$). For each pair $w_1, w_2\in W_{x}$, let \cdef{$V_{w_1} = \AA^{l(w_0)}_\sch{S}$}; also, let \cdef{$V_{w_1,w_2}$} be the image of $\dot{w_1}\RLBSch{G_{w_0}}{x} \dot{w_1}^{-1} \cap \dot{w_2}\RLBSch{G_{w_0}}{x} \dot{w_2}^{-1}$ under $\RLBSch{b(w_1)}{x} : \dot{w_1}\RLBSch{G_{w_0}}{x} \dot{w_1}^{-1} \to \AA^{l(w_0)}_{\sch{S}}$. For each pair $w_1, w_2\in W_{x}$, 
glue $V_{w_1}$ to $V_{w_2}$ along $V_{w_1,w_2} \iso V_{w_2,w_1}$. The resulting scheme is \cdef{$\RLBSch{G}{x}/\RLBSch{B}{x}$}.

\[
\xymatrix{
\LBsch{B'} \ar[d]  \ar@{=}[r] & \LBSch{B}{x} \ar[r]\ar[d] & \RLBSch{B}{x} \ar[d] & \ar[l] \lBSch{B}{x} \ar[d]  \\
\LBsch{G}\ar[d]  \ar@{=}[r] & \LBSch{G}{x} \ar[r]\ar[d]^{\LBSch{b}{x}} & \RLBSch{G}{x} \ar[d]^{\RLBSch{b}{x}} & \ar[l] \lBSch{G}{x} \ar[d]^{\lBSch{b}{x}} \\
\LBsch{G}/\LBsch{B}  \ar@{=}[r] & \left(\RLBSch{G}{x}/\RLBSch{B}{x}\right)_{\eta} \ar[r] & \RLBSch{G}{x}/\RLBSch{B}{x} & \ar[l]  \left(\RLBSch{G}{x}/\RLBSch{B}{x}\right)_{{\bf s}}  \\
}
\]

The paragraph above defines $\RLBSch{G}{x}/\RLBSch{B}{x}$ and also \cdef{$\RLBSch{b}{x} : \RLBSch{G}{x}\to \RLBSch{G}{x}/\RLBSch{B}{x}$}. It is clear that $\RLBSch{b}{x}$ is a principal fibration with group 
$\RLBSch{B}{x}$. Since this fibration is given locally by $\RLBSch{b(w)}{x}$ --- which is defined by composing two isomorphisms and then projecting $\AA^{l(w_0)}_\sch{S} \times \RLBSch{B}{x}$ --- the smoothness of the fibration follows from Lemma~\ref{lemma: borel}.

Finally, the statements about the generic and the special fibre are clear from the 
construction of $\RLBSch{G}{x}/\RLBSch{B}{x}$.
\end{proof}



One more lemma is required for the proof of Theorem~\ref{theorem: induction}.
With notation as in Subsubsection~\ref{subsubsection: flag} and hypotheses as in Lemma~\ref{lemma: borel}, we define:
\begin{equation*}
\begin{aligned}
&\cdef{\RLBSch{X}{x} \ceq \left\{ (g,h) \in \RLBSch{G}{x}\times\RLBSch{G}{x} \tq h^{-1}gh\in \RLBSch{B}{x}\right\}}\\
&\cdef{\RLBSch{Y}{x} \ceq\left\{ (g,h\RLBSch{B}{x}) \in \RLBSch{G}{x}\times\left(\RLBSch{G}{x}/\RLBSch{B}{x}\right) \tq h^{-1}gh\in \RLBSch{B}{x}\right\}}
\end{aligned}
\end{equation*}
and $\cdef{\RLBSch{\beta}{x} : \RLBSch{X}{x} \to  \RLBSch{Y}{x}}$ by $\RLBSch{\beta}{x}(g,h) \ceq (g,h\RLBSch{B}{x})$.

\begin{lemma}\label{lemma: fibration}
The map $\RLBSch{\beta}{x} : \RLBSch{X}{x} \to \RLBSch{Y}{x}$ is a smooth principal 
$\RLBSch{B}{x}$-fibration. Moreover, the generic fibre $\KSch{\beta}{x}$ of $\RLBSch{\beta}{x}$ 
is the smooth principal fibration $\beta_\LBsch{B}$, the latter map defined in Subsection~\ref{subsection: induction}.
Moreover,
$\kSch{\beta}{x} = \beta_\kSch{B}{x}$, the latter map also defined in 
Subsection~\ref{subsection: induction}.
\end{lemma}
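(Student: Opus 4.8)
The plan is to realise $\RLBSch{\beta}{x}$ as the restriction of the principal fibration $\RLBSch{b}{x}\colon\RLBSch{G}{x}\to\RLBSch{G}{x}/\RLBSch{B}{x}$ of Lemma~\ref{lemma: flag} to an $\RLBSch{B}{x}$-stable closed subscheme, and then to read off the generic and special fibres by flat base change on $\sch{S}$. First I would observe that $\RLBSch{X}{x}$ is stable under the right action $(g,h)\cdot b=(g,hb)$ of $\RLBSch{B}{x}$ on $\RLBSch{G}{x}\times\RLBSch{G}{x}$. Scheme-theoretically this holds because $\RLBSch{B}{x}$ is a closed subgroup scheme of $\RLBSch{G}{x}$ (Lemma~\ref{lemma: borel}): on $\RLBSch{X}{x}\times\RLBSch{B}{x}$ one has $(hb)^{-1}g(hb)=\Int(b^{-1})(h^{-1}gh)$, and since the map $(g,h)\mapsto h^{-1}gh$ factors through $\RLBSch{B}{x}$ there and conjugation by a point of $\RLBSch{B}{x}$ preserves $\RLBSch{B}{x}$, the new conjugate again factors through $\RLBSch{B}{x}$. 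Moreover, $\id_{\RLBSch{G}{x}}\times\RLBSch{b}{x}\colon\RLBSch{G}{x}\times\RLBSch{G}{x}\to\RLBSch{G}{x}\times(\RLBSch{G}{x}/\RLBSch{B}{x})$ is the base change of $\RLBSch{b}{x}$ along the second projection, hence a smooth principal $\RLBSch{B}{x}$-fibration by Lemma~\ref{lemma: flag}.

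Next I would identify $\RLBSch{Y}{x}$ with the incidence subscheme $\{(g,\xi)\colon g\cdot\xi=\xi\}$ of $\RLBSch{G}{x}\times(\RLBSch{G}{x}/\RLBSch{B}{x})$, a closed subscheme since it is the equaliser of $(g,\xi)\mapsto g\xi$ and $(g,\xi)\mapsto\xi$: the defining condition ``$h^{-1}gh\in\RLBSch{B}{x}$'' says exactly that $g$ fixes $h\RLBSch{B}{x}$, and because $\RLBSch{b}{x}$ is an $\RLBSch{B}{x}$-torsor this is a scheme-theoretic equivalence. Consequently $(\id_{\RLBSch{G}{x}}\times\RLBSch{b}{x})^{-1}(\RLBSch{Y}{x})=\RLBSch{X}{x}$ as closed subschemes of $\RLBSch{G}{x}\times\RLBSch{G}{x}$; since $\id_{\RLBSch{G}{x}}\times\RLBSch{b}{x}$ is faithfully flat, descent (closed subschemes of the base correspond bijectively to $\RLBSch{B}{x}$-stable closed subschemes of the total space) identifies $\RLBSch{X}{x}/\RLBSch{B}{x}$ with $\RLBSch{Y}{x}$ and the torsor projection with $\RLBSch{\beta}{x}$. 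Pulling back to $\RLBSch{Y}{x}$ the Zariski-local trivialisations of $\RLBSch{b}{x}$ produced in the proof of Lemma~\ref{lemma: flag} (the charts $\dot w\RLBSch{G_{w_0}}{x}\dot w^{-1}\iso\AA^{l(w_0)}_\sch{S}\times\RLBSch{B}{x}$) shows that $\RLBSch{\beta}{x}$ is Zariski-locally trivial with fibre $\RLBSch{B}{x}$, hence a principal $\RLBSch{B}{x}$-fibration; it is smooth because $\RLBSch{B}{x}$ is smooth over $\sch{S}$ (Lemma~\ref{lemma: borel}).

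For the statements about the fibres I would note that the formation of the fibre products $\RLBSch{X}{x}$, $\RLBSch{Y}{x}$ and of the torsor quotient $\RLBSch{\beta}{x}$ all commute with the flat base changes $\Spec{\Kq}\to\sch{S}$ and $\Spec{\kq}\to\sch{S}$. Combining this with $(\RLBSch{G}{x})_\eta=\LBsch{G}$, $(\RLBSch{B}{x})_\eta=\LBsch{B}$, $(\RLBSch{G}{x}/\RLBSch{B}{x})_\eta=\LBsch{G}/\LBsch{B}$ and the corresponding facts on special fibres, $(\RLBSch{G}{x})_{{\bf s}}=\kSch{G}{x}$, $(\RLBSch{B}{x})_{{\bf s}}=\kSch{B}{x}$, $(\RLBSch{G}{x}/\RLBSch{B}{x})_{{\bf s}}=\kSch{G}{x}/\kSch{B}{x}$ (Lemmas~\ref{lemma: borel} and~\ref{lemma: flag}), one obtains $(\RLBSch{X}{x})_\eta=\sch{X}_{\LBsch{B}}$, $(\RLBSch{Y}{x})_\eta=\sch{Y}_{\LBsch{B}}$, hence $\KSch{\beta}{x}=\beta_{\LBsch{B}}$, and likewise $\kSch{\beta}{x}=\beta_{\kSch{B}{x}}$, in the notation of Subsection~\ref{subsection: induction}.

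The step I expect to be the main obstacle is the scheme-theoretic comparison $(\id_{\RLBSch{G}{x}}\times\RLBSch{b}{x})^{-1}(\RLBSch{Y}{x})=\RLBSch{X}{x}$: that is, making the set-builder definition of $\RLBSch{Y}{x}$ precise enough that it literally represents the fppf quotient $\RLBSch{X}{x}/\RLBSch{B}{x}$, with no spurious nilpotents coming from the non-flatness of the conjugation map. Once that is settled, the remaining assertions are formal consequences of faithfully flat descent and of the compatibility of the constructions with base change.
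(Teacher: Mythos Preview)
Your proposal is correct and is essentially a careful unpacking of what the paper asserts in a single sentence: the paper's proof merely states that the lemma ``is a direct consequence of the definitions above, the definitions of Subsection~\ref{subsection: induction} and Lemmas~\ref{lemma: borel} and \ref{lemma: flag}.'' Your pullback argument---identifying $\RLBSch{\beta}{x}$ as the base change of the principal fibration $\id\times\RLBSch{b}{x}$ along the closed immersion $\RLBSch{Y}{x}\hookrightarrow\RLBSch{G}{x}\times(\RLBSch{G}{x}/\RLBSch{B}{x})$ and then reading off the fibres over $\eta$ and ${\bf s}$---is exactly the content that makes this a ``direct consequence,'' and your worry about the scheme-theoretic comparison is not a genuine obstacle since $\RLBSch{B}{x}$ is closed in $\RLBSch{G}{x}$ and the condition $h^{-1}gh\in\RLBSch{B}{x}$ is $\RLBSch{B}{x}$-invariant in $h$.
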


\begin{proof}
This is a direct consequence of the definitions above, the definitions of Subsection~\ref{subsection: induction} and Lemmas~\ref{lemma: borel} and \ref{lemma: flag}.
\end{proof}

\subsubsection{Proof of Theorem~\ref{theorem: induction}}\label{subsubsection: induced character sheaves}


\begin{proof}{(Proof of Theorem~\ref{theorem: induction})}
Since $(x)$ is a hyperspecial (poly)vertex by hypothesis, $\nu_{\RRSch{G}{x}} = \id$ and $\nu_{\RRSch{T}{x}} = \id$ (refer to Subsection~\ref{subsection: models}). 
%
%
Consider the integral model $\RLBSch{B}{x}$ for $\LBsch{B}$ introduced in Lemma~\ref{lemma: borel}. 
Observe that the reductive quotient map \cdef{$\LBsch{r} : \LBsch{B}\to \Lsch{T}$} is a $\Kq$-morphism. 
Let $\RLBSch{r}{x} : \RLBSch{B}{x} \to \RLBSch{T}{x}$ be the unique extension of $\LBsch{r}$ (existence and uniqueness is given by the Extension Principle of \cite[2.3]{Yu}.) Likewise define $\RLBSch{B}{x} \hookrightarrow \RLBSch{G}{x}$ with generic fibre $\LBsch{B} \hookrightarrow \Lsch{G}$.

Using notation from Subsubsection~\ref{subsubsection: flag}, 
define \cdef{$\RLBSch{\alpha}{x} : \RLBSch{X}{x} \to \RLBSch{T}{x}$} by 
$\RLBSch{\alpha}{x}(h,p) = \RLBSch{r}{x}(h^{-1}gh)$. We remark that $\RLBSch{\alpha}{x}$ 
is smooth.
%
Let \cdef{$\RLBSch{\gamma}{x} : \RLBSch{Y}{x} \to \RLBSch{G}{x}$} be projection onto the first component. 
We remark that $\RLBSch{\gamma}{x}$ is proper.
Recalling the description of parabolic induction of character sheaves from Subsection~\ref{subsection: induction}, it suffices to show that
\begin{eqnarray*}
\NC{\RRLBSch{G}{x}}\ {\gamma_\LLBsch{B}}_!\ (\beta_\LLBsch{B})_\# \ {\alpha_\LLBsch{B}}^*\ \fais{G}
&=&  {\gamma_{\llSch{B}{x}}}_!\ (\beta_{\llSch{B}{x}})_\# \  {\alpha_\llSch{B}{x}}^*\ 
\NC{\RRSch{T}{x}}\ \fais{G}
\end{eqnarray*}
The left-hand column of the diagram above gives
	\begin{equation*}
	\begin{aligned}
 & \NC{\RRLBSch{G}{x'}}\ {\gamma_\LLBsch{B'}}_!\ (\beta_\LLBsch{B'})_\# \ {\alpha_\LLBsch{B'}}^*\ \fais{G}\\
&= 
\NC{\RRLBSch{G}{x'}}\ {\LLBSch{\gamma}{x'}}_!\ (\LLBSch{\beta}{x'})_\#\ {\LLBSch{\alpha}{x'}}^*\ \fais{G}.
	\end{aligned}
	\end{equation*}
Since $\RLBSch{\gamma}{x'}$ is proper, proper base chance (\cf Subsection~\ref{item: FNC_!}) provides a natural isomorphism
	\begin{equation*}
	\begin{aligned}
&\NC{\RRLBSch{G}{x'}}\ {\LLBSch{\gamma}{x'}}_!\ (\LLBSch{\beta}{x'})_\#\ {\LLBSch{\alpha}{x'}}^*\ \fais{G}\\
&\iso 
{\llBSch{\gamma}{x'}}_!\ \NC{\RRLBSch{Y}{x'}}\ (\LLBSch{\beta}{x'})_\#\ {\LLBSch{\alpha}{x'}}^*\ \fais{G}.
	\end{aligned}
	\end{equation*}
Since $\lBSch{\gamma}{x'} = \gamma_{\lBSch{B'}{x'}}$, it follows that
	\begin{equation*}
	\begin{aligned}
&{\llBSch{\gamma}{x'}}_!\ \NC{\RRLBSch{Y}{x'}}\ (\LLBSch{\beta}{x'})_\#\ {\LLBSch{\alpha}{x'}}^*\ \fais{G}\\
&\iso {\gamma_{\llBSch{B'}{x'}}}_!\ 
\NC{\RRLBSch{Y}{x'}}\ (\LLBSch{\beta}{x'})_\#\ {\LLBSch{\alpha}{x'}}^*\ \fais{G}.
	\end{aligned}
	\end{equation*}
Lemma~\ref{lemma: fibration} ensures that the hypotheses of Proposition~\ref{proposition: NCe} are met, so Proposition~\ref{proposition: NCe} provides a natural isomorphism
	\begin{equation*}
	\begin{aligned}
& {\gamma_{\llBSch{B'}{x'}}}_!\ 
\NC{\RRLBSch{Y}{x'}}\ (\LLBSch{\beta}{x'})_\#\ {\LLBSch{\alpha}{x'}}^*\ \fais{G}\\
&\iso {\gamma_{\llBSch{B'}{x'}}}_!\ 
(\llBSch{\beta}{x'})_\#\ \NC{\RRLBSch{X}{x'}}\ {\LLBSch{\alpha}{x'}}^*\ \fais{G}.
	\end{aligned}
	\end{equation*}
In Lemma~\ref{lemma: fibration} we saw that $\lBSch{\beta}{x'} =  \beta_{\lBSch{B'}{x'}}$, so
	\begin{equation*}
	\begin{aligned}
& {\gamma_{\llBSch{B'}{x'}}}_!\ 
(\llBSch{\beta}{x'})_\#\ \NC{\RRLBSch{X}{x'}}\ {\LLBSch{\alpha}{x'}}^*\ \fais{G}\\
& = {\gamma_{\llBSch{B'}{x'}}}_!\ 
(\beta_{\llBSch{B'}{x'}})_\#\ \NC{\RRLBSch{X}{x'}}\ {\LLBSch{\alpha}{x'}}^*\ \fais{G}.
	\end{aligned}
	\end{equation*}
Since $\LBSch{\alpha}{x'}$ is smooth, Subsection~\ref{item: SNC^*} provides a natural isomorphism
	\begin{equation*}
	\begin{aligned}
& {\gamma_{\llBSch{B'}{x'}}}_!\ (\beta_{\llBSch{B'}{x'}})_\# 
\  \NC{\RRLBSch{X}{x'}}\ {\LLBSch{\alpha}{x'}}^*\ \fais{G} \\
& \iso {\gamma_{\llBSch{B'}{x'}}}_!\ (\beta_{\llBSch{B'}{x'}})_\# 
\  {\llBSch{\alpha}{x'}}^*\ \NC{\RRLBSch{T}{x'}}\  \fais{G}.
	\end{aligned}
	\end{equation*}
As observed above, $\llBSch{\alpha}{x'} =  \alpha_\llBSch{B'}{x'}$, so
	\begin{equation*}
	\begin{aligned}
& {\gamma_{\llBSch{B'}{x'}}}_!\ (\beta_{\llBSch{B'}{x'}})_\# 
\  {\llBSch{\alpha}{x'}}^*\ \NC{\RRLBSch{T}{x'}}\ \fais{G}\\
&= {\gamma_{\llBSch{B'}{x'}}}_!\ (\beta_{\llBSch{B'}{x'}})_\# \  {\alpha_\llBSch{B'}{x'}}^*\ 
 \NC{\RRLBSch{T}{x'}}\ \fais{G}.
	\end{aligned}
	\end{equation*}
This concludes the proof of all but the last sentence of Theorem~\ref{theorem: induction}.
To finish, we need only one more remark: $\RES{T}{x}  =\NC{\RRLBSch{T}{x}}$ and $\RES{G}{x}  =\NC{\RRLBSch{G}{x}}$ since $(x)$ is hyperspecial.
\end{proof}

\begin{corollary}\label{corollary: induction}
Let $\Ksch{T}$ be an unramified maximal torus in $\Ksch{G}$ and let $\Lq$ be the splitting extension in $\KKq$ for $\Ksch{T}$. Let $\LBsch{B'}$ be a Borel subgroup of $\Lsch{G}$ such that the Levi component of $\LLBsch{B'}=\LBsch{B'}\times_\Spec{\Lq} \Spec{\KKq}$ is $\LLBsch{T}$. Suppose $x$ lies in the image of $I(\Ksch{T},\Kq) \hookrightarrow I(\Ksch{G},\Kq)$ and let $x'$ be the image of $x$ under $I(\Ksch{G},\Kq) \hookrightarrow I(\Lsch{G},\Lq)$. Suppose $(x)$ is hyperspecial. Then there is a smooth integral model $\RLBSch{B'}{x'}$ for $\LBsch{B'}$ such that $\RLBSch{B'}{x'}(\RLq) = \LBsch{B'}(\Lq) \cap \RLBSch{G}{x'}(\RLq)$ and  the special fibre $\lBSch{B'}{x'}$ of $\RLBSch{B'}{x'}$ is a Borel subgroup of $\llBSch{G}{x'}$. Moreover,
\[
\NC{\RRSch{G}{x}}\ \ind^{\KKsch{G}}_{\LLBsch{B'}}\ \fais{G} \iso \ind^{\kkSch{G}{x}}_{\llSch{B'}{x'}}\ \NC{\RRSch{T}{x}}\ \fais{G},
\]
for every character sheaf $\fais{G}$ of $\KKsch{T}$.
Consequently,
\[
\RES{G}{x}\ \ind^{\KKsch{G}}_{\LLBsch{B'}}\ \fais{G} \iso \ind^{\kkSch{G}{x}}_{\llSch{B'}{x'}}\ \RES{T}{x}\ \fais{G}.
\]
\end{corollary}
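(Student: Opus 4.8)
The plan is to reduce Corollary~\ref{corollary: induction} to Theorem~\ref{theorem: induction} by passing to the unramified splitting extension $\Lq$, over which $\Lsch{T}=\Ksch{T}_\Lq$ is split, and then translating the conclusion back to the models over $\Rq$ using Lemma~\ref{lemma: unramified model} together with the insensitivity of nearby cycles and of the maximal reductive quotient to unramified base change.

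First I would check that the hypotheses of Theorem~\ref{theorem: induction} hold for the tuple $(\Lq,\Lsch{G},\Lsch{T},\LBsch{B'},x')$. Since $\Lq/\Kq$ is finite unramified and $\Ksch{T}$ splits over $\Lq$, the torus $\Lsch{T}=\Ksch{T}_\Lq$ is a maximal split torus of $\Lsch{G}=\Ksch{G}_\Lq$, and $\LBsch{B'}$ is a Borel subgroup of $\Lsch{G}$ with Levi factor $\Lsch{T}$ (the Levi of $\LLBsch{B'}$ being $\LLBsch{T}$ by hypothesis, and such a torus inside $\LBsch{B'}$ being unique). The one point requiring an argument is that $(x')$ is a hyperspecial (poly)vertex of $I(\Lsch{G},\Lq)$. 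By hypothesis $(x)$ is hyperspecial in $I(\Ksch{G},\Kq)$, which --- as recorded in the proof of Lemma~\ref{lemma: even} --- means that the special fibre $\kSch{G}{x}$ is already reductive; by Lemma~\ref{lemma: unramified model} we have $\RLBSch{G}{x'}=\RSch{G}{x}\times_{\Spec{\Rq}}\Spec{\RLq}$, so its special fibre over $\lq$ is $\kSch{G}{x}\times_{\Spec{\kq}}\Spec{\lq}=\lSch{G}{x'}$, which is again reductive, whence $(x')$ is hyperspecial.

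Then I would apply Theorem~\ref{theorem: induction} over $\Lq$. It yields the smooth integral model $\RLBSch{B'}{x'}$ of $\LBsch{B'}$ over $\RLq$ with $\RLBSch{B'}{x'}(\RLq)=\LBsch{B'}(\Lq)\cap\RLBSch{G}{x'}(\RLq)$ and with special fibre $\lBSch{B'}{x'}$ a Borel subgroup of $\llBSch{G}{x'}$, together with the isomorphism
\[
\NC{\RRLBSch{G}{x'}}\ \ind^{\KKsch{G}}_{\LLBsch{B'}}\ \fais{G} \iso \ind^{\kkSch{G}{x'}}_{\llSch{B'}{x'}}\ \NC{\RRLBSch{T}{x'}}\ \fais{G}
\]
for every character sheaf $\fais{G}$ of $\KKsch{T}$; no discrepancy arises from having passed through $\Lq$ since $\KKsch{G}=\LLBsch{G}$ and $\KKsch{T}=\LLBsch{T}$ as varieties over the algebraically closed field $\KKq$, and $\ind^{\KKsch{G}}_{\LLBsch{B'}}$ is a purely geometric construction over $\KKq$. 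Finally I would re-express the objects appearing here in terms of the models over $\Rq$: by Lemma~\ref{lemma: unramified model}, $\RLBSch{G}{x'}=\RSch{G}{x}\times_{\Spec{\Rq}}\Spec{\RLq}$ and $\RLBSch{T}{x'}=\RSch{T}{x}\times_{\Spec{\Rq}}\Spec{\RLq}$, and since $\RKK$ is an algebraic closure of both $\Rq$ and $\RLq$ this gives $\RRLBSch{G}{x'}=\RRSch{G}{x}$ and $\RRLBSch{T}{x'}=\RRSch{T}{x}$, hence $\NC{\RRLBSch{G}{x'}}=\NC{\RRSch{G}{x}}$, $\NC{\RRLBSch{T}{x'}}=\NC{\RRSch{T}{x}}$, and $\kkSch{G}{x'}=\kkSch{G}{x}$, with $\llSch{B'}{x'}$ a Borel subgroup of $\kkSch{G}{x}$. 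This is precisely the first displayed isomorphism of the corollary. For the second, since $(x)$ --- equivalently $(x')$ --- is hyperspecial we have $\nu_{\RRSch{G}{x}}=\id$ and $\nu_{\RRSch{T}{x}}=\id$, so $\RES{G}{x}=\NC{\RRSch{G}{x}}$ and $\RES{T}{x}=\NC{\RRSch{T}{x}}$ by Definition~\ref{definition: the functor}, and the second isomorphism coincides with the first.

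The only genuinely delicate point I anticipate is the bookkeeping of the base-change identifications --- in particular, confirming that the integral model $\RLBSch{B'}{x'}$ supplied by Theorem~\ref{theorem: induction} over $\Lq$ agrees with the schematic closure of $\LBsch{B'}$ in $\RLBSch{G}{x'}$ of Lemma~\ref{lemma: borel}, and tracking that the transitions $\Rq\rightsquigarrow\RLq\rightsquigarrow\RKK$ are compatible with the functors $\NC{-}$, $\ind$, $\nu_!$ and the Tate twists. Everything else is formal, given Theorem~\ref{theorem: induction}, Lemma~\ref{lemma: unramified model}, and the fact that $\NC{-}$ and the maximal-reductive-quotient construction commute with unramified base change.
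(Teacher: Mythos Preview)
The paper does not spell out a proof of this corollary, treating it as an immediate consequence of Theorem~\ref{theorem: induction} together with Lemma~\ref{lemma: unramified model} (as announced just before the statement of Theorem~\ref{theorem: induction}: ``First we state and prove the result for the case $T$ is split, and then show how to extend it to the general case''). Your argument --- passing to the unramified splitting extension $\Lq$, verifying that $(x')$ is hyperspecial via Lemma~\ref{lemma: unramified model}, applying Theorem~\ref{theorem: induction} there, and then using Lemma~\ref{lemma: unramified model} again to identify $\RRLBSch{G}{x'}=\RRSch{G}{x}$, $\RRLBSch{T}{x'}=\RRSch{T}{x}$, and the corresponding nearby cycles functors --- is precisely the intended deduction and is correct.
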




\section{Matching sheaves with representations}\label{section: distributions}

In this section we explain how to `match' sheaves with representations. Then we provide relatively simple conditions from which one may conclude that a given sheaf matches a given representation, in this sense.
%

\subsection{Sheaves of Depth Zero}\label{subsection: Frobenius}

Suppose $x\in I(\Ksch{G},\Kq)$. Recall (from Subsection~\ref{subsection: models}) that $\rkSch{G}{x}$ is a reductive algebraic group over $\kq$; thus, $\rkkSch{G}{x}$ is defined over $\kq$, and so admits a Frobenius automorphism, henceforth denoted by $\Frob_{\rkSch{G}{x}} : \rkkSch{G}{x} \to \rkkSch{G}{x}$.

\begin{definition}\label{definition: depth zero}
An equivariant perverse sheaf $\fais{F}$ on $\KKsch{G}$ is said to have \cdef{depth zero} if
$\RESFAIS{F}{G}{x}$ is Frobenius-stable (in the sense of Subsection~\ref{subsection: characteristic function}) for each $x\in I(\Ksch{G},\Kq)$; \ie if there is an isomorphism  
\[
\Frob_{\rkSch{G}{x}}^* \RESFAIS{F}{G}{x} \iso \RESFAIS{F}{G}{x}.
\]
in $D^b_c(\rkkSch{G}{x},\EE)$, for each $x\in I(\Ksch{G},\Kq)$.
\end{definition}

Let $\fais{F}$ be an equivariant perverse sheaf of depth zero. Fix $x\in I(\Ksch{G},\Kq)$ and let 
\[
\varphi_{\fais{F},x} : \Frob_{\rkSch{G}{x}}^* \RESFAIS{F}{G}{x}\to\RESFAIS{F}{G}{x}
\] 
be an isomorphism in $D^b_c(\rkkSch{G}{x};\EE)$. Suppose $x\leq y$ in $I(\Ksch{G},\Kq)$. Since $\rkPsch{G}{x\leq y}$ is a parabolic subgroup of $\rkSch{G}{x}$ with Levi component $\rkSch{G}{y}$ (notice that these are all schemes over $\kq$, see Theorem~\ref{theorem: restriction}),  there is a canonical isomorphism
\begin{equation}\label{equation: r&f}
\res^{\rkkSch{G}{x}}_{\rkkPsch{G}{x\leq y}}\ \Frob_{\rkSch{G}{x}}^* \iso \Frob_{\rkSch{G}{y}}^*\ \res^{\rkkSch{G}{x}}_{\rkkPsch{G}{x\leq y}}
\end{equation}
Using this together with the full force of Theorem~\ref{theorem: restriction}, we define an isomorphism
\[
\varphi_{\fais{F},x\leq y} : \Frob_{\rkSch{G}{y}}^* \RESFAIS{F}{G}{y}\to \RESFAIS{F}{G}{y}
\] 
in $D^b_c(\rkkSch{G}{y};\EE)$ by the following diagramme.
\[
\xymatrix{
\res^{\rkkSch{G}{x}}_{\rkkPsch{G}{x\leq y}} \Frob_{\rkSch{G}{x}}^*\ \RESFAIS{F}{G}{x} \ar[d]_{\textrm{\eqref{equation: c&f}}}^{\iso} \ar[rr]^{\res^{\rkkSch{G}{x}}_{\rkkPsch{G}{x\leq y}}\ \varphi_{\fais{F},x}} && \res^{\rkkSch{G}{x}}_{\rkkPsch{G}{x\leq y}}\ \RESFAIS{F}{G}{x} \ar@{=}[d] \\
\Frob_{\rkSch{G}{y}}^*\ \res^{\rkkSch{G}{x}}_{\rkkPsch{G}{x\leq y}}\ \RESFAIS{F}{G}{x} \ar[d]_{\textrm{Theorem~\ref{theorem: restriction}}}^{\iso} &&\res^{\rkkSch{G}{x}}_{\rkkPsch{G}{x\leq y}}\ \RESFAIS{F}{G}{x} \ar[d]_{\textrm{Theorem~\ref{theorem: restriction}}}^{\iso} \\
\Frob_{\rkSch{G}{y}}^*\ \RESFAIS{F}{G}{y} \ar@{.>}[rr]^{\varphi_{\fais{F},x\leq y}} && \RESFAIS{F}{G}{y} 
}
\]
From Subsection~\ref{subsection: characteristic function} (see \eqref{equation: restriction and characteristic functions} in particular) it follows that
\begin{equation}\label{equation: tfs a}
\chf{\fais{F},x\leq y} =
\Res^{\rkSch{G}{x}(\kq)}_{\rkPsch{G}{x\leq y}(\kq)} \CHF{F}{G}{x}.
\end{equation}

Similarly, if $\fais{F}$ is an equivariant perverse sheaf of depth zero and $x\in I(\Ksch{G},\Kq)$ and $g\in \Ksch{G}(\Kq)$, then any isomorphism
\[
\varphi_{\fais{F},x} : \Frob_{\rkSch{G}{x}}^* \RESFAIS{F}{G}{x}\to\RESFAIS{F}{G}{x}
\] 
defines an isomorphism 
\[
\varphi_{\fais{F},g,x} : \Frob_{\rkSch{G}{gx}}^* \RESFAIS{F}{G}{gx}\to\RESFAIS{F}{G}{gx}
\]
in $D^b_c(\rkkSch{G}{gx};\EE)$ as follows. First, observe that 
\[
\Frob_{\rkSch{G}{x}}\circ \rkkSch{m(g^{-1})}{gx} =  \rkkSch{m(g^{-1})}{gx}\circ \Frob_{\rkSch{G}{gx}},
\] 
since $g\in \Ksch{G}(\Kq)$. Consequently, there is a canonical isomorphism
\begin{equation}\label{equation: c&f}
{\rkkSch{m(g^{-1})}{gx}}^*\ \Frob_{\rkSch{G}{x}}^* \iso \Frob_{\rkSch{G}{gx}}^*\ {\rkkSch{m(g^{-1})}{gx}}^*.
\end{equation}
Now the following diagramme defines $\varphi_{\fais{F},g,x}$.
\[
\xymatrix{
{\rkkSch{m(g^{-1})}{gx}}^*\ \Frob_{\rkSch{G}{x}}^*\ \RESFAIS{F}{G}{x} \ar[d]_{\textrm{\eqref{equation: c&f}}}^{\iso} \ar[rr]^{{\rkkSch{m(g^{-1})}{gx}}^*\ \varphi_{\fais{F},x}} && {\rkkSch{m(g^{-1})}{gx}}^*\ \RESFAIS{F}{G}{x} \ar@{=}[d] \\
\Frob_{\rkSch{G}{gx}}^*\ {\rkkSch{m(g^{-1})}{gx}}^*\ \RESFAIS{F}{G}{x} \ar[d]_{\textrm{Theorem~\ref{theorem: conjugation}}}^{\iso} &&{\rkkSch{m(g^{-1})}{gx}}^*\ \RESFAIS{F}{G}{x} \ar[d]_{\textrm{Theorem~\ref{theorem: conjugation}}}^{\iso} \\
\Frob_{\rkSch{G}{gx}}^*\ \RESFAIS{F}{G}{gx} \ar@{.>}[rr]^{\varphi_{\fais{F},g,x}}  && \RESFAIS{F}{G}{gx} 
}
\]

If $\fais{F}$ is an equivariant perverse sheaf of depth zero, then each isomorphism
\[
\varphi_{\fais{F},x} : \Frob_{\rkSch{G}{x}}^*\  \RES{G}{x} \fais{F}\to \RES{G}{x} \fais{F}
\]
determines a function \cdef{$\CHF{F}{G}{x} : \rkkSch{G}{x}(\kq) \to \EE$} as in Subsection~\ref{subsection: characteristic function}. 
By Subsection~\ref{subsection: characteristic function} (see \eqref{equation: pb and f} in particular), 
\begin{equation}\label{equation: tfs b}
\CHF{F}{G}{g,x} = \ \CHF{F}{G}{x} \circ \rkSch{m(g^{-1})}{gx}.
\end{equation}

\begin{definition}\label{definition: Frobenius structure}
A \emph{Frobenius structure} for an equivariant perverse sheaf of depth zero sheaf is a family \cdef{$\varphi_\fais{F} = (\varphi_{\fais{F},x})_{x\in I(\Ksch{G},\Kq)}$} of isomorphisms
\[
\varphi_{\fais{F},x} : \Frob_{\rkSch{G}{x}}^* \RESFAIS{F}{G}{x} \to \RESFAIS{F}{G}{x},
\]
that satisfies the following properties:
\begin{enumerate}
\item[(a)]
if $x\leq y$ in the Bruhat order for $I(\Ksch{G},\Kq)$ then 
\[
\CHF{F}{G}{y} = \Res^{\rkkSch{G}{x}(\kq)}_{\rkkPsch{G}{x\leq y}(\kq)} \CHF{F}{G}{x}  
\]
where $\rkkPsch{G}{x\leq y}$ is the parabolic subgroup of $\rkkSch{G}{x}$ appearing in Theorem~\ref{theorem: restriction};
\item[(b)]
if $x\in I(\Ksch{G},\Kq)$ and $g\in \Ksch{G}(\Kq)$ then 
\[
\CHF{F}{G}{gx} = \CHF{F}{G}{x} \circ \rkkSch{m(g^{-1})}{gx},
\]
where $\rkkSch{m(g^{-1})}{gx} : \rkkSch{G}{gx} \to \rkkSch{G}{x}$ is the isomorphism appearing in Theorem~\ref{theorem: conjugation}.
\end{enumerate}
\end{definition}


\subsection{Matching Character Sheaves with Representations}\label{subsection: matching}

We now prepare for the main definition of the paper. 

For each $x\in I(\Ksch{G},\Kq)$, let \cdef{$\rho_{x} : \Rsch{G}_x(\Rq) \to \rkSch{G}{x}(\kq)$} be the composition of: the group homomorphism 
\[
\Hom_{\Spec{\Rq}}(\Spec{\Rq},\Rsch{G}_x) \to \Hom_{\Spec{\Rq}}(\Spec{\kq},\Rsch{G}_x)
\]
defined by composition with the canonical map $\Spec{\kq} \to \Spec{\Rq}$; the identification
$\Rsch{G}_x(\kq) = \kSch{G}{x}(\kq)$; and the map of $\kq$-rational points $\kSch{G}{x}(\kq) \to \rkSch{G}{x}(\kq)$
induced from $\nu_{\RSch{G}{x}}$. We will refer to $\rho_{x}$ as a \cdef{reduction map}. Alternatively, the reduction map $\rho_{x}$ may be defined by
\[
\RSch{G}{x}(\Rq) = \Ksch{G}(\Kq)_x \to \Ksch{G}(\Kq)_x/\Ksch{G}(\Kq)_{x,0^+} = \rkSch{G}{x}(\kq).
\]
Observe that $\rho_x$ is a group homomorphism only; it is not a map of ringed spaces nor is it a map of points induced by a map of ringed spaces.

Let $\pi : \Ksch{G}(\Kq) \to \Aut_\EE(V)$ be an admissible $\ell$-adic representation. For each $x\in I(\Ksch{G},\Kq)$, let 
\[
\cdef{\cRes^{\Ksch{G}(\Kq)}_{\RSch{G}{x}(\Rq)}\pi : \rkSch{G}{x}(\kq) \to \Aut_\EE(V^{\Ksch{G}(\Kq)_{x,0^+}})}
\] 
be the representation of $\rkSch{G}{x}(\kq)$ obtained by factoring $\pi\vert_{\RSch{G}{x}(\Rq)}$ through $\rho_x : \RSch{G}{x}(\Rq) \to \rkSch{G}{x}(\kq)$ (this uses the fact $\rkSch{G}{x}(\kq) = \Ksch{G}(\Kq)_x/\Ksch{G}(\Kq)_{x,0^+}$). The representation \[(\cRes^{\Ksch{G}(\Kq)}_{\RSch{G}{x}(\Rq)}\pi, V^{\Ksch{G}(\Kq)_{x,0^+}})\] is called the \cdef{compact restriction of $\pi$ to $\rkSch{G}{x}(\kq)$}.

Consider the Grothendieck group \cdef{$R_\ZZ(\Ksch{G}(\Kq))$} of admissible $\ell$-adic representations of $\Ksch{G}(\Kq)$ and let \cdef{$R^0_\ZZ(\Ksch{G}(\Kq))$} be the subgroup generated by admissible $\ell$-adic representations of depth zero. 



\begin{definition}\label{definition: matching}
An equivariant perverse sheaf $\fais{F}$ on $\KKsch{G}$ with Frobenius structure 
$\varphi_\fais{F}=(\varphi_{\fais{F},x})_{x\in I(\Ksch{G},\Kq)}$ \cdef{matches} an element $\sum_m b_m [\pi_m]$ of $R^0_\ZZ(\Ksch{G}(\Kq))\otimes_\ZZ \QQ$ if
\[
\CHF{F}{G}{x} = \sum_m b_m\, \Trace \cRes^{\Ksch{G}(\Kq)}_{\RSch{G}{x}(\Rq)}\pi_m,
\]
for each $x\in I(\Ksch{G},\Kq)$.
\end{definition}

\begin{proposition}\label{proposition: matching}
Let $x_0,  x_1, \ldots , x_{N-1}$ be the vertices of a chamber in the Bruhat-Tits building $I(\Ksch{G},\Kq)$.
Suppose $\fais{F}$ is an equivariant perverse sheaf on $\KKsch{G}$ with Frobenius-structure $\varphi_\fais{F}$. Let $\sum_m b_m [\pi_m]$ be an element of $R^0_\ZZ(\Ksch{G}(\Kq))\otimes_\ZZ \QQ$. If
\[
\CHF{F}{G}{x_i} = \sum_m b_m\, \Trace \cRes^{\Ksch{G}(\Kq)}_{\RSch{G}{x_i}(\Rq)}\pi_m,
\]
for all $i\in \{ 0,1, \ldots , N-1\}$, then $\fais{F}$ matches $\sum_m b_m [\pi_m]$ in the sense of Definition~\ref{definition: matching}.
\end{proposition}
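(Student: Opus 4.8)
The plan is to propagate the matching identity from the vertices of the given chamber to every $x \in I(\Ksch{G},\Kq)$, using that both sides of the identity transform in exactly the same way under the two operations encoded by the Frobenius-structure axioms (a) and (b) of Definition~\ref{definition: Frobenius structure}: the face relation in the building, and the action of $\Ksch{G}(\Kq)$. Write $\theta_x \ceq \sum_m b_m\,\Trace\cRes^{\Ksch{G}(\Kq)}_{\RSch{G}{x}(\Rq)}\pi_m$, a function on $\rkkSch{G}{x}(\kq)$, and put $D_x \ceq \CHF{F}{G}{x} - \theta_x$; we are given $D_{x_0} = \cdots = D_{x_{N-1}} = 0$ and must show $D_x = 0$ for all $x$. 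Since $\varphi_\fais{F}$ is a Frobenius structure, $\CHF{F}{G}{x}$ obeys (a) and (b) as $x$ varies. The key point is that $\theta_x$ obeys the same two relations, so $D_x$ does too. Relation (b) for $\theta$ is essentially a tautology: conjugation by $g$ carries $\Ksch{G}(\Kq)_x$, $\Ksch{G}(\Kq)_{x,0^+}$ and the reduction map at $x$ to their counterparts at $gx$, whence $v \mapsto \pi_m(g^{-1})v$ identifies $\cRes^{\Ksch{G}(\Kq)}_{\RSch{G}{gx}(\Rq)}\pi_m$ with $\bigl(\cRes^{\Ksch{G}(\Kq)}_{\RSch{G}{x}(\Rq)}\pi_m\bigr)\circ\rkkSch{m(g^{-1})}{gx}$, so $\theta_{gx} = \theta_x\circ\rkkSch{m(g^{-1})}{gx}$. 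Relation (a) for $\theta$, for $x \leq y$, is the representation-theoretic shadow of Theorem~\ref{theorem: restriction}: $\cRes^{\Ksch{G}(\Kq)}_{\RSch{G}{y}(\Rq)}\pi_m$ is the Jacquet module of $\cRes^{\Ksch{G}(\Kq)}_{\RSch{G}{x}(\Rq)}\pi_m$ along the parabolic $\rkkPsch{G}{x\leq y}$ of $\rkkSch{G}{x}$ — concretely, for $\pi$ of depth zero with space $V$ one has $V^{\Ksch{G}(\Kq)_{y,0^+}} = \bigl(V^{\Ksch{G}(\Kq)_{x,0^+}}\bigr)^{U(\kq)}$ with $U$ the unipotent radical of $\rkkPsch{G}{x\leq y}$ — and the trace of a Jacquet module is the normalised parabolic restriction of the trace, so $\theta_y = \Res^{\rkkSch{G}{x}(\kq)}_{\rkkPsch{G}{x\leq y}(\kq)}\theta_x$. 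These two facts about compact restriction are standard; I would isolate them as a preliminary lemma.

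Granting this, first I would reduce to the case where $x$ is a (poly)vertex: every facet of $I(\Ksch{G},\Kq)$ has a (poly)vertex among its faces, so every $x$ admits a (poly)vertex $v$ with $v\leq x$, and relation (a) for $D$ gives $D_x = \Res^{\rkkSch{G}{v}(\kq)}_{\rkkPsch{G}{v\leq x}(\kq)}D_v$; hence it suffices to prove $D_v = 0$ for every (poly)vertex $v$.

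For this I would use that $\Ksch{G}(\Kq)$ acts transitively on the chambers of $I(\Ksch{G},\Kq)$ (a standard fact of Bruhat-Tits theory). Given a (poly)vertex $v$, pick a chamber $C'$ having $v$ among its faces and an element $g\in\Ksch{G}(\Kq)$ carrying the fixed chamber $C$ (with vertices $x_0,\ldots,x_{N-1}$) onto $C'$. As $g$ acts by polysimplicial automorphisms preserving incidence, it carries the set $\{x_0,\ldots,x_{N-1}\}$ onto the set of (poly)vertices of $C'$, so $v = g x_i$ for some $i$ (all the relevant data depending only on the facet of a point); relation (b) for $D$ then gives $D_v = D_{g x_i} = D_{x_i}\circ\rkkSch{m(g^{-1})}{g x_i} = 0$. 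Combined with the previous reduction, $D_x = 0$ for all $x\in I(\Ksch{G},\Kq)$, which is precisely the assertion that $\fais{F}$ matches $\sum_m b_m[\pi_m]$ in the sense of Definition~\ref{definition: matching}.

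The propagation itself is purely formal; the only step needing genuine input is the verification that $\theta$ satisfies (a) and (b), and within that only (a) goes beyond bookkeeping — it rests on the Jacquet-module description of compact restriction between nested parahorics together with the finite-group-of-Lie-type identity relating the character of a Jacquet module to normalised parabolic restriction. I expect that to be the only real obstacle.
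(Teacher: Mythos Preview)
Your proposal is correct and follows essentially the same approach as the paper: both verify that the compact-restriction characters $\theta_x$ satisfy the same two transformation rules (a) and (b) as $\CHF{F}{G}{x}$ (the paper cites \cite[C.1.3]{V2} for your Jacquet-module relation (a)) and then use transitivity of $\Ksch{G}(\Kq)$ on chambers to propagate the matching from the given vertices to all of $I(\Ksch{G},\Kq)$. The only difference is organizational---the paper first extends to every point of the closed chamber via (a) and then conjugates to reach arbitrary points via (b), whereas you first reduce an arbitrary point to a vertex via (a) and then conjugate that vertex to some $x_i$ via (b)---but the content is identical.
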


\begin{proof}
Without loss of generality, in this proof we replace $\sum_m b_m [\pi_m]$ with $[\pi]$.

First, suppose $x$ is any element of the chamber with vertices  $x_0, x_1, \ldots , x_{N-1}$. Then 
$x_i\leq x$ for some (poly)vertex $(x_i)$ of the chamber. Then, by \cite[C.1.3]{V2},
\begin{equation}\label{equation: SS 1}
\cRes^{\Ksch{G}(\Kq)}_{\RSch{G}{x}(\Rq)} \pi = \Res^{\rkSch{G}{x_i}(\kq)}_{\rkPsch{G}{x_i\leq x}(\kq)} \cRes^{\Ksch{G}(\Kq)}_{\RSch{G}{x_i}(\Rq)} \pi,
\end{equation}
where $\Res^{\rkSch{G}{x_i}(\kq)}_{\rkPsch{G}{x\leq x_i}(\kq)}$ is the restriction functor on representations. (See Subsection~\ref{subsection: rrq}, especially \eqref{equation: r}, for the definition of $\ksch{\tau}_{x\leq x_i} : \rkSch{G}{x\leq x_i} \to \rkSch{G}{x_i}$.)

and if $\ksch{\tau}_{x_i\leq x} : \rkSch{G}{x_i\leq x} \to \rkSch{G}{x_i}$ 
and $\ksch{\iota}_{x_i\leq x} : \rkSch{G}{x_i\leq x} \to \rkSch{G}{x}$
are defined as in Subsection~\ref{subsection: rrq},
then \eqref{equation: SS 1} implies that
for all $g\in \rkSch{G}{x}(\kq)$,
\begin{equation}\label{equation: SS 2}
\begin{aligned}
&\Trace\left(\cRes^{\Ksch{G}(\Kq)}_{\RSch{G}{x}(\Rq)} \pi \right)(g)\\
&= {q^{-\dim\ksch{\tau}_{x_i\leq x}}} 
\sum_{p\in \ksch{\tau}_{x_i\leq x}^{-1}(g)}\!\!\!\!\!\Trace\left(\cRes^{\Ksch{G}(\Kq)}_{\RSch{G}{x_i}(\Rq)} \pi\right)(\ksch{\iota}_{x_i\leq x}(p)).
\end{aligned}
\end{equation} 
It follows from Definition~\ref{definition: depth zero}, Part (a)
and the definition of parabolic restriction of functions that
\begin{equation}\label{equation: SS 5}
\CHF{F}{G}{x}(g) = {\Res^{\rkSch{G}{x_i}(\kq)}_{\rkSch{G}{x_i\leq x}(\kq)}\ \chi_{\varphi_{\fais{F},x_i}}}  =
{q^{-\dim\ksch{\tau}_{x_i\leq x}}}
\sum_{p\in \ksch{\tau}_{x_i\leq x}^{-1}(g)} \CHF{F}{G}{x_i}(\ksch{\iota}_{x_i\leq x}(p)),
\end{equation}
for all $g\in \rkSch{G}{x}(\kq)$.
Now, $\Trace\cRes^{\Ksch{G}(\Kq)}_{\RSch{G}{x_i}(\Rq)} \pi = \CHF{F}{G}{x_i}$ implies
\begin{equation}\label{equation: SS 6}
\CHF{F}{G}{x}(g) = {q^{-\dim\ksch{\tau}_{x_i\leq x}}}
\sum_{p\in \ksch{\tau}_{x_i\leq x}^{-1}(g)} \Trace\left(\cRes^{\Ksch{G}(\Kq)}_{\RSch{G}{x_i}(\Rq)}\pi \right) (\ksch{\iota}_{x_i\leq x}(p)),
\end{equation}
for all $g\in \rkSch{G}{x}(\kq)$.
Combining \eqref{equation: SS 6} with \eqref{equation: SS 2} it follows that
\begin{equation}\label{equation: SS 7}
\CHF{F}{G}{x} = \Trace\cRes^{\Ksch{G}(\Kq)}_{\RSch{G}{x}(\Rq)} \pi,
\end{equation}
for all $x$ in the chamber with vertices $x_0, x_1, \ldots , x_{N-1}$.

Next, suppose $y$ is an arbitrary element of $I(\Ksch{G},\Kq)$. Then there is some $g\in \Ksch{G}(\Kq)$ such that $y = gx$ for some $x$ in the chamber with vertices $x_0, x_1, \ldots , x_{N-1}$. Then $\left(\cRes^{\Ksch{G}(\Kq)}_{\RSch{G}{x}(\Rq)} \pi\right)\rkSch{m(g^{-1})}{y}$ and $\cRes^{\Ksch{G}(\Kq)}_{\RSch{G}{y}(\Rq)}\pi$ are equivalent representations (\cf Subsection~\ref{subsection: conjugation} for the definition of $\rkSch{m(g^{-1})}{y}$), and
\begin{equation}\label{equation: SS 8}
 \Trace\cRes^{\Ksch{G}(\Kq)}_{\RSch{G}{y}(\Rq)}\pi = \Trace\cRes^{\Ksch{G}(\Kq)}_{\RSch{G}{x}(\Rq)} \pi\ \circ \rkSch{m(g^{-1})}{y}.
\end{equation}
On the other hand, %
\begin{equation}\label{equation: SS 9}
\CHF{F}{G}{y} = \CHF{F}{G}{x} \circ \rkkSch{m(g^{-1})}{y}.
\end{equation}
But it was shown above that 
$\Trace\cRes^{\Ksch{G}(\Kq)}_{\RSch{G}{x}(\Rq)} = \CHF{F}{G}{x}$, from which it follows that
\begin{equation}\label{equation: SS 10}
 \Trace\cRes^{\Ksch{G}(\Kq)}_{\RSch{G}{y}(\Rq)}\pi =  \CHF{F}{G}{y},
\end{equation}
thus concluding the proof of Proposition~\ref{proposition: matching}.
\end{proof}

\subsection{Characteristic Distributions}\label{subsection: SS}

In this subsection we suppose that the connected centre of $\Ksch{G}(\Kq)$ is anisotropic. 

Let $\Hecke(\Ksch{G}(\Kq))$ be the Hecke algebra of locally constant functions $f : \Ksch{G}(\Kq)\to \EE$ supported by sets which are compact modulo the centre of $\Ksch{G}(\Kq)$. Let $\Hecke(\Ksch{G}(\Kq)^\elliptic)$ be the subspace of $\Hecke(\Ksch{G}(\Kq))$ consisting of functions supported by elliptic elements of $\Ksch{G}(\Kq)$.

\begin{definition}\label{definition: SS}
Suppose $\fais{F}$ is a character sheaf of $\KKsch{G}$ of depth zero with Frobenius-structure $\varphi_\fais{F}$.
The \cdef{characteristic distribution} of $\fais{F}$ (with respect to $\varphi_\fais{F}$) is the distribution $\Theta^0_{\fais{F}}$ on $\Hecke(\Ksch{G}(\Kq)^\elliptic)$ represented by the function
\[
\Theta^0_{\fais{F}}(g) \ceq \sum_{ (x) \subset I(\Ksch{G},\Kq) } (-1)^{\dim(x)}\ \CHF{F}{G}{x}\left(\rho_{x}(g)\right),
\]
where the union of taken over all facets $(x)$ in the Bruhat-Tits building of $\Ksch{G}(\Kq)$ such that $g\in \RSch{G}{x}(\Rq)$. 
\end{definition}

\begin{remark}
The sum in Definition~\ref{definition: SS} is taken over a set of facets of $I(\Ksch{G},\Kq)$. That set of facets is finite when $g$ is regular elliptic, as explained in \cite[4.9]{SS}.
Definition~\ref{definition: SS} is motivated by \cite[Prop.IV.1.5]{SS}, which will play an important role in the proof of Proposition~\ref{proposition: SS}.
\end{remark}

\begin{proposition}\label{proposition: SS}
Suppose the connected centre of $\Ksch{G}(\Kq)$ is anisotropic (and hence compact) and either $\Ksch{G}$ is split over $\Kq$ or $\Kq$ has characteristic $0$.
If $\fais{F}$ is a character sheaf of $\KKsch{G}$ of depth zero with Frobenius structure $\varphi_\fais{F}$ that matches the element $\sum_m b_m\, [\pi_m]$ of $R^0_\ZZ(\Ksch{G}(\Kq))\otimes_\ZZ \QQ$, then 
\[
\Theta^0_{\fais{F}}(f) = \sum_m b_m\, \Theta_{\pi_m}(f),
\] 
for all $f\in \Hecke(\Ksch{G}(\Kq)^\elliptic)$, where $\Theta_{\pi_m}$ is the character of the admissible representation $\pi_m$.
\end{proposition}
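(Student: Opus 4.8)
The plan is to reduce the statement to an equality of functions on the regular elliptic locus and then to recognise the resulting identity as the Schneider--Stuhler character formula \cite[Prop.~IV.1.5]{SS}, the matching hypothesis being exactly what converts the characteristic functions $\CHF{F}{G}{x}$ into the traces appearing there. By $\QQ$-linearity of both sides of the asserted identity in the element $\sum_m b_m[\pi_m]\in R^0_\ZZ(\Ksch{G}(\Kq))\otimes_\ZZ\QQ$, I may assume it equals $[\pi]$ for a single admissible representation $\pi$ of depth zero. Both $\Theta^0_{\fais{F}}$ and $\Theta_\pi$ are distributions on $\Hecke(\Ksch{G}(\Kq)^\elliptic)$ which are represented by locally constant functions on the regular elliptic locus: for $\Theta_\pi$ this is the theorem on local integrability of characters (valid under the present hypothesis that $\Kq$ has characteristic $0$, or in the split case), and for $\Theta^0_{\fais{F}}$ it follows from Definition~\ref{definition: SS} together with the finiteness, for $g$ regular elliptic, of the set of facets $(x)$ with $g\in\RSch{G}{x}(\Rq)$ (\cf \cite[4.9]{SS}). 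Since the regular elliptic locus has full measure in the elliptic locus, it suffices to prove that the two representing functions agree at every regular elliptic $g$.

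Fix such a $g$. Unwinding Definition~\ref{definition: SS} and applying the matching hypothesis (Definition~\ref{definition: matching}) term by term,
\[
\Theta^0_{\fais{F}}(g) = \sum_{(x)\,:\, g\in\RSch{G}{x}(\Rq)} (-1)^{\dim(x)}\, \CHF{F}{G}{x}\left(\rho_{x}(g)\right).
\]
By the definition of compact restriction (Subsection~\ref{subsection: matching}), the representation $\cRes^{\Ksch{G}(\Kq)}_{\RSch{G}{x}(\Rq)}\pi$ of $\rkSch{G}{x}(\kq)$ is $\pi\vert_{\RSch{G}{x}(\Rq)}$, acting on $V^{\Ksch{G}(\Kq)_{x,0^+}}$, factored through $\rho_{x}$; since $g\in\RSch{G}{x}(\Rq)=\Ksch{G}(\Kq)_x$ normalises $\Ksch{G}(\Kq)_{x,0^+}$, the operator $\pi(g)$ preserves $V^{\Ksch{G}(\Kq)_{x,0^+}}$ and
\[
\CHF{F}{G}{x}\left(\rho_{x}(g)\right) = \Trace\left(\cRes^{\Ksch{G}(\Kq)}_{\RSch{G}{x}(\Rq)}\pi\right)\!\left(\rho_{x}(g)\right) = \Trace\left(\pi(g);\, V^{\Ksch{G}(\Kq)_{x,0^+}}\right).
\]
Hence $\Theta^0_{\fais{F}}(g) = \sum_{(x)\,:\, g\in\Ksch{G}(\Kq)_x} (-1)^{\dim(x)}\, \Trace\left(\pi(g);\, V^{\Ksch{G}(\Kq)_{x,0^+}}\right)$.

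It remains to identify this alternating sum with $\Theta_\pi(g)$. Because $\pi$ has depth zero, it is generated by its vectors fixed by the pro-unipotent radicals $\Ksch{G}(\Kq)_{x,0^+}$, so the level-zero Schneider--Stuhler coefficient system on $I(\Ksch{G},\Kq)$ with stalk $V^{\Ksch{G}(\Kq)_{x,0^+}}$ at $(x)$ gives a finite resolution of $\pi$; the Euler--Poincar\'e/Lefschetz computation of the character in \cite[Prop.~IV.1.5]{SS} then states that, for $g$ regular elliptic, $\Theta_\pi(g)$ equals the alternating sum $\sum_{(x)} (-1)^{\dim(x)}\Trace(\pi(g); V^{\Ksch{G}(\Kq)_{x,0^+}})$ over the (finitely many) facets fixed by $g$ --- which for regular elliptic $g$ are precisely those $(x)$ with $g\in\RSch{G}{x}(\Rq)$, as was already noted in the remark following Definition~\ref{definition: SS}. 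The hypotheses that the connected centre of $\Ksch{G}(\Kq)$ is anisotropic and that $\Ksch{G}$ is split over $\Kq$ or $\Kq$ has characteristic $0$ are exactly those under which \cite{SS} applies (and under which the character exists). Comparing the two displays gives $\Theta^0_{\fais{F}}(g)=\Theta_\pi(g)$ for all regular elliptic $g$, hence $\Theta^0_{\fais{F}}=\Theta_\pi$ as distributions on $\Hecke(\Ksch{G}(\Kq)^\elliptic)$; restoring linearity proves the proposition.

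The main obstacle is the last step: one must verify that the level-zero Schneider--Stuhler resolution genuinely applies to a depth-zero $\pi$, that the indexing set of facets in \cite[Prop.~IV.1.5]{SS} coincides for regular elliptic $g$ with the set $\{(x):g\in\RSch{G}{x}(\Rq)\}$ used in Definition~\ref{definition: SS}, and that the sign/orientation conventions in the alternating sum over facets match; once these bookkeeping points are settled (they are essentially built into the way Definition~\ref{definition: SS} was set up), the remaining steps are formal.
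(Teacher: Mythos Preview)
Your proposal is correct and follows essentially the same route as the paper: reduce by linearity to a single depth-zero $\pi$, reduce to a pointwise identity on regular elliptic $g$, use the matching hypothesis to replace $\CHF{F}{G}{x}(\rho_x(g))$ by $\Trace\cRes^{\Ksch{G}(\Kq)}_{\RSch{G}{x}(\Rq)}\pi(\rho_x(g))$, and then invoke the Schneider--Stuhler character formula. The only substantive difference is in the citations: the paper appeals to \cite[III.4.10, III.4.16]{SS} (with $e=0$) rather than \cite[Prop.~IV.1.5]{SS}, and it inserts the observation that a depth-zero $\pi$ is admissible and finitely generated, hence of finite length by \cite[3.12]{Ber}, which is the hypothesis needed to apply those results; your closing paragraph correctly flags exactly these bookkeeping points.
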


\begin{proof}
It will be enough to prove the following: if $\pi$ is a representation of $\Ksch{G}(\Kq)$ of depth zero and $\fais{F}$ is a perverse sheaf matching $\pi$, then 
\begin{equation}\label{equation: SS a}
\Theta^0_{\fais{F}}(g) = \Theta_{\pi}(g)
\end{equation}
for all regular elliptic $g\in \Ksch{G}(\Kq)$.
So, suppose $\pi$ matches $\fais{F}$.
Then
\begin{equation}\label{equation: SS b}
\CHF{F}{G}{x} =  \Trace \cRes^{\Ksch{G}(\Kq)}_{\RSch{G}{x}(\Rq)}\pi,
\end{equation}
for each $x\in I(\Ksch{G},\Kq)$.
Since $\pi$ is a representation of depth zero, it is admissible and finitely-generated. Thus, $\pi$ is of finite-length by \cite[3.12]{Ber}.
Since the connected centre of $\Ksch{G}(\Kq)$ is anisotropic and either $\Ksch{G}$ splits over $\Kq$ or $\Kq$ has characteristic $0$, it follows from \cite[III.4.10]{SS} and \cite[III.4.16]{SS} (with $e=0$) that
\begin{equation}\label{equation: SS c}
\Theta_\pi(g) = \sum_{k=0}^d (-1)^k \sum_{(x) \in I(\Ksch{G},\Kq)^g_k}\Trace\cRes^{\Ksch{G}(\Kq)}_{\RSch{G}{x}(\kq)}\pi(\rho_x(g)),
\end{equation}
where the inner sum is taken over over all $k$-dimensional facets of $I(\Ksch{G},\Kq)$ such that $g\in \RSch{G}{x}(\Rq)$, where $d$ is the maximal dimension of facets in $I(\Ksch{G},\Kq)$ and where $I(\Ksch{G},\Kq)^g_k$ of polyfacets of dimension $k$ whose stabilisers contain $g$. This set of polyfacets is finite since $g$ is regular elliptic; this is a consequence of \cite[Lem. III.4.9]{SS}.  Let $I(\Ksch{G},\Kq)^g$ be the union of the sets $I(\Ksch{G},\Kq)^g_k$ as $k$ ranges from $0$ to $d$. Then $I(\Ksch{G},\Kq)^g$ is finite and \eqref{equation: SS c} may be re-written in the form
\begin{equation}\label{equation: SS d}
\Theta_\pi(g) = \sum_{(x) \in I(\Ksch{G},\Kq)^g} (-1)^{\dim(x)} \Trace\cRes^{\Ksch{G}(\Kq)}_{\RSch{G}{x}(\kq)}\pi(\rho_x(g)).
\end{equation}
Combining this with \eqref{equation: SS b}, and recalling Definition~\ref{definition: SS}, we have
\begin{equation}\label{equation: SS e}
\Theta_\pi(g) = \sum_{(x) \in I(\Ksch{G},\Kq)^g} (-1)^{\dim(x)} \CHF{F}{G}{x}(\rho_x(g))
= \Theta^0_\fais{F}(g),
\end{equation}
thus proving \eqref{equation: SS a} and therefore concluding the proof of Proposition~\ref{proposition: SS}.
\end{proof}

\begin{remark}
The converse to Proposition~\ref{proposition: SS} is false.
\end{remark}

\section{Examples: Algebraic Tori and General Linear Groups}\label{section: examples}

This section is devoted to some simple examples of character sheaves of depth zero: in Subsection~\ref{subsection:  unramified tori} we show that unramified induced algebraic tori admit character sheaves of depth zero and in Subsection~\ref{subsection: induced} we show that $\KKsch{\GL(N)}$ also admits many character sheaves of depth zero (Proposition~\ref{proposition: induced}). The paper ends with Theorem~\ref{theorem: representations} in which we find sheaves that match many representations of $\GL(N,\Kq)$ of depth zero -- some of these representations are supercuspidal, some are not.



\subsection{Character Sheaves of Unramified Induced Tori}\label{subsection: unramified tori}

Let $\Ksch{T}$ be an induced (\cf \cite[4.1]{Yu}) algebraic torus over $\Kq$ which is
\emph{unramified}, \ie splits over an unramified extension. 
The Bruhat-Tits building $I(\Ksch{T},\Kq)$ is a single (poly)vertex $(x_0)$.

\begin{proposition}\label{proposition: unramified tori}
Let $\Ksch{T}$ be an unramified induced torus
and let $\theta : \Ksch{T}(\Kq) \to \EE^\times$  be an $\ell$-adic character of depth zero.
Then there is a character sheaf $\fais{F}$ of $\KKsch{T}$ of depth zero and a Frobenius-structure $\varphi_{\fais{F}}$ that matches $(-1)^{\dim\KKsch{T}}[\theta]$ in the sense of Definition~\ref{definition: matching}.
\end{proposition}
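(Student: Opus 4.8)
The plan is to realise $\fais{F}$ as a shifted Kummer local system on $\KKsch{T}$, obtained by lifting through nearby cycles a Lang-type local system on the special fibre of the integral model of $\Ksch{T}$, and to reduce the matching condition to a single identity at $(x_0)$.

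First I would record the simplifications afforded by the hypotheses. Because $\Ksch{T}$ is unramified and induced, the integral model $\RSch{T}{x_0}$ is a torus over $\Rq$ whose special fibre $\kSch{T}{x_0}$ is a torus over $\kq$ (\cf \cite[4.1]{Yu}); hence $\nu_{\RRSch{T}{x_0}}=\id$, so $\RES{T}{x_0}=\NC{\RRSch{T}{x_0}}$ (Definition~\ref{definition: the functor}) and there is no Tate twist to keep track of. Since $I(\Ksch{T},\Kq)=\{(x_0)\}$ is a single facet and conjugation on a torus is trivial, a Frobenius structure (Definition~\ref{definition: Frobenius structure}) on a depth-zero sheaf $\fais{F}$ amounts to a single isomorphism $\varphi_{\fais{F},x_0} : \Frob_{\rkSch{T}{x_0}}^*\RES{T}{x_0}\fais{F}\to\RES{T}{x_0}\fais{F}$ --- conditions (a) and (b) of Definition~\ref{definition: Frobenius structure} hold trivially (in (b) one has $gx_0=x_0$ and $\rkkSch{m(g^{-1})}{x_0}=\id$ because $\Ksch{T}$ is abelian) --- and matching $(-1)^{\dim\KKsch{T}}[\theta]$ (Definition~\ref{definition: matching}) reduces to the single identity $\CHF{F}{T}{x_0}=(-1)^{\dim\KKsch{T}}\,\Trace\cRes^{\Ksch{T}(\Kq)}_{\RSch{T}{x_0}(\Rq)}\theta$. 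Moreover, since $\theta$ has depth zero, $\theta\vert_{\RSch{T}{x_0}(\Rq)}$ is trivial on $\Ksch{T}(\Kq)_{x_0,0^+}$ and so factors through $\rho_{x_0}$, yielding a character $\bar\theta : \rkSch{T}{x_0}(\kq)\to\EE^\times$ with $\Trace\cRes^{\Ksch{T}(\Kq)}_{\RSch{T}{x_0}(\Rq)}\theta=\bar\theta$, the underlying space being one-dimensional. So it suffices to produce an equivariant character sheaf $\fais{F}$ of $\KKsch{T}$ of depth zero with Frobenius structure satisfying $\CHF{F}{T}{x_0}=(-1)^{\dim\KKsch{T}}\bar\theta$.

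Next I would build the relevant sheaf on the special fibre and lift it. The group $\rkSch{T}{x_0}(\kq)$ is finite of order prime to $p$, where $p$ is the residue characteristic of $\Kq$, so $\bar\theta$ has prime-to-$p$ order; by the usual Lang-isogeny construction (\cf \cite[\S8]{CS}) there is a Kummer local system $\fais{M}$ on $\rkkSch{T}{x_0}$, with prime-to-$p$ monodromy, carrying a Frobenius structure $\varphi_\fais{M}$ whose characteristic function is precisely $\bar\theta$. To transport $\fais{M}$ to the generic fibre I would compute nearby cycles of Kummer local systems on the model: passing to an unramified splitting extension $\Lq/\Kq$ for $\Ksch{T}$ and invoking Lemma~\ref{lemma: unramified model} reduces the computation to the case of $\GL(1)$ over $\RLq$, where a direct analysis of the \'etale Kummer covering $[d]$ (for $d$ prime to $p$), together with proper and smooth base change for nearby cycles (Subsections~\ref{item: FNC_!} and \ref{item: SNC^*}), shows that $\NC{}$ carries the Kummer local system indexed by $(\lambda,1/d)$ on the generic fibre to the one indexed by $(\lambda,1/d)$ on the special fibre, compatibly with the specialisation of Frobenius structures; extending over products and descending along $\Gal(\Lq/\Kq)$ then exhibits $\NC{\RRSch{T}{x_0}}$ as a Frobenius-compatible surjection from the Kummer local systems on $\KKsch{T}$ onto those on $\rkkSch{T}{x_0}$. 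Choosing a preimage of $\fais{M}$ gives a Kummer local system $\fais{L}$ on $\KKsch{T}$ together with a Frobenius-equivariant isomorphism $\NC{\RRSch{T}{x_0}}\fais{L}\cong\fais{M}$.

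Finally I would set $\fais{F}\ceq\fais{L}[\dim\KKsch{T}]$. This is a character sheaf of $\KKsch{T}$ by \cite[\S2.10]{CS}, and it is $\KKsch{T}$-equivariant because conjugation on a torus is trivial. Then $\RES{T}{x_0}\fais{F}=\NC{\RRSch{T}{x_0}}\fais{L}[\dim\KKsch{T}]=\fais{M}[\dim\KKsch{T}]$ is Frobenius-stable via $\varphi_{\fais{F},x_0}\ceq\varphi_\fais{M}[\dim\KKsch{T}]$, so $\fais{F}$ has depth zero (Definition~\ref{definition: depth zero}) and $\varphi_{\fais{F},x_0}$ constitutes a Frobenius structure $\varphi_\fais{F}$. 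Since a shift by $[\dim\KKsch{T}]$ multiplies the characteristic function by $(-1)^{\dim\KKsch{T}}$, this yields $\CHF{F}{T}{x_0}=(-1)^{\dim\KKsch{T}}\bar\theta=(-1)^{\dim\KKsch{T}}\Trace\cRes^{\Ksch{T}(\Kq)}_{\RSch{T}{x_0}(\Rq)}\theta$, which is exactly the matching condition. I expect the nearby-cycles step to be the main obstacle: one must check not merely that a tame Kummer local system specialises to a Kummer local system, but that the isomorphism $\NC{\RRSch{T}{x_0}}\fais{L}\cong\fais{M}$ can be chosen Frobenius-equivariant, so that the characteristic function is preserved exactly rather than up to a Tate twist --- this is where the bookkeeping with the roots-of-unity characters $\psi$ and $\bar\psi$ and the normalisation of the Lang sheaf really enters.
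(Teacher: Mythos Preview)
Your proposal is correct and follows essentially the same approach as the paper: reduce to the single polyvertex $(x_0)$, identify $\RES{T}{x_0}$ with $\NC{\RRSch{T}{x_0}}$, and verify that nearby cycles take a Kummer local system on $\KKsch{T}$ to the Kummer local system on $\kkSch{T}{x_0}$ with the prescribed characteristic function. The only cosmetic difference is directional: the paper first builds $\fais{L}=\lambda^*\fais{E}_{d,\psi}$ on the generic fibre (lifting $\bar\lambda$ to $\lambda$ via the Extension Principle, which gives the isomorphism $X(\KKsch{T})\cong X(\kkSch{T}{x_0})$) and then computes $\NC{\RRSch{T}{x_0}}\fais{L}\cong\bar\lambda^*\fais{E}_{d,\bar\psi}$, whereas you build $\fais{M}$ on the special fibre first and then choose a preimage under $\NC{\RRSch{T}{x_0}}$; the surjectivity you invoke is precisely the character-lattice isomorphism the paper uses, so the two arguments are interchangeable.
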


\begin{proof}
In this proof we use the notation of Subsection~\ref{subsection: local systems}; in particular, let $\psi : \mu_{\KKq} \to \EE^\times$ be a fixed injective $\ell$-adic character of the roots of unity in $\KKq$.
Let $p$ be the characteristic of the residue field $\kq$ and consider the group $\mu^p(\KKq)$ (resp. $\mu^p(\kkq)$) of roots of unity in $\KKq$ (resp. $\kkq$) with order prime to $p$. Hensel's Lemma provides a canonical isomorphism $\mu^p(\KKq)\iso \mu^p(\kkq)$. Let $\bar\psi : \mu^p(\kkq) \to \EE^\times$ be the character determined by $\psi$ and this isomorphism.

Let $\theta_{x_0}$ be the restriction of $\theta$ to $\RSch{T}{x_0}(\Rq)$;
let $\bar\theta_{x_0}$ be the compact restriction of $\theta$ to $\kSch{T}{x_0}(\kq)$. 
Let $\bar\lambda$ be a character of $\kkSch{T}{x_0}$ such that 
\begin{equation}
\bar\theta_{x_0}(t) = \bar\psi(\bar\lambda(t)),
\end{equation}
for all $t\in \kSch{T}{x_0}(\kq)$.

The torus $\Ksch{T}$ is unramified and $\RSch{T}{x_0}$ is the Neron model. We define a homomorphism $X(\KKsch{T}) \to X(\kkSch{T}{x_0})$ as follows. Let $\lambda$ be a character of $\KKsch{T}$. By the Extension Principle (\cf \cite[1.7]{BT2} or \cite[2.3]{Yu}), $\lambda$ extends uniquely to a morphism $\RRSch{\lambda}{x_0} : \RRSch{T}{x_0} \to \GL(1)_{\RKK}$. Restricting to special fibres produces $\kkSch{\lambda}{x_0}$, which is a character of  $\kkSch{T}{x_0}$. The homomorphism $X(\KKsch{T}) \to X(\kkSch{T}{x_0})$ thus defined is an isomorphism of character lattices.

Let $d$ be the order of $\theta_{x_0}$ (which is also the order of $\bar\theta_{x_0}$, and prime to $p$ since $\Ksch{T}$ is an induced torus) and consider the Kummer local system $\fais{L} \ceq \lambda^*\, \fais{E}_{d,\psi}$. Define $\fais{F} \ceq \fais{L}[\dim\KKsch{T}]$ and observe that $\fais{F}$ is a character sheaf of $\KKsch{T}$ (because character sheaves of algebraic tori are just Kummer local systems as sheaf complexes concentrated at the dimension of the torus, see Subsection~\ref{subsection: local systems}); in particular, $\fais{F}$ is a perverse sheaf on $\KKsch{T}$.

Since $\RRSch{\lambda}{x_0}$ is smooth and $\kkSch{\lambda}{x_0} = \bar\lambda$ and $\KKSch{\lambda}{x_0} = \lambda$, it follows from Subsection~\ref{item: SNC^*} that 
\[
\NC{\RRSch{T}{x_0}}\lambda^*\, \fais{E}_{d,\psi} \iso \bar\lambda^*\,  \NC{\RRSch{T}{x_0}}\fais{E}_{d,\psi}.
\]
A simple calculation shows that $ \NC{\RRSch{T}{x_0}}\fais{E}_{d,\psi}$ is the summand of the sheaf 
\[{\kkSch{[d]}{x_0}}_*\, (\EE)_{\kkSch{\GL(1)}{x_0}}\] on which $\mu_{d,\kkq}$ acts according to the character $\bar\psi$ (refer to Subsection~\ref{subsection: local systems}); in other words, 
\[
\NC{\RRSch{T}{x_0}}\fais{E}_{d,\psi} \iso \fais{E}_{d,\bar\psi}.
\]
(Since $\theta$ has depth zero, $d$ is invertible in $\kq$.)

Now, $\bar\lambda^*\,  \fais{E}_{d,\bar\psi}$ is Frobenius-stable (again, notice that $d$ is prime to $p$) and there is a \emph{canonical} isomorphism
\[
\chf{\varphi_{\bar\lambda^*\,  \fais{E}_{d,\bar\psi}}} : \Frob_{\kkSch{T}{x_0}}^* \bar\lambda^*\,  \fais{E}_{d,\bar\psi} \to \bar\lambda^*\,  \fais{E}_{d,\bar\psi}
\]
such that $\chf{\bar\lambda^*\,  \fais{E}_{d,\bar\psi}}(t) = \bar\psi(\bar\lambda(t))$, for all $t\in \kSch{T}{x_0}(\kq)$.
Since 
\[
\NC{\RRSch{T}{x_0}} \fais{L} = \bar\lambda^*\, \fais{E}_{d,\bar\psi},
\]
we define $\varphi_{\fais{F},x_0} \ceq \varphi_{\bar\lambda^*\,  \fais{E}_{d,\bar\psi}[\dim\KKsch{T}]}$. Since $I(\Ksch{T},\Kq) = \{ (x_0)\}$, this defines a Frobenius-structure for $\fais{L}$ that matches $(-1)^{\dim\KKsch{T}}[\theta]$, thus concluding the proof.
\end{proof}

\subsection{Some Character Sheaves of General Linear Groups}\label{subsection: induced}

In Subsection~\ref{subsection: unramified tori} we exhibited some examples of perverse sheaves with depth zero on unramified induced tori.
In this section we exhibit some examples of perverse sheaves with depth zero on general linear groups.
We write $\Ksch{G}$ for $\GL(N)_\Kq$ throughout Subsections~\ref{subsection: induced} and \ref{subsection: representations}.

\begin{proposition}\label{proposition: induced}
Let $\Ksch{T}$ be an unramified maximal torus in $\Ksch{G}$. Let $\theta$ be a character of $\Ksch{T}(\Kq)$ of depth zero and let $\fais{L}$ be the Kummer local system on $\KKsch{T}$ given in Proposition~\ref{proposition: unramified tori}. Let $\Lq$ be a splitting extension for $\Ksch{T}$ and let $\LBsch{B'}$ be a Borel subgroup of $\Lsch{G}\ceq\Ksch{G}\times_\Spec{\Kq} \Spec{\Lq}$ with Levi component $\Lsch{T}\ceq \Ksch{T}\times_\Spec{\Kq} \Spec{\Lq}$. Then 
$\ind_\KKsch{B'}^\KKsch{G} \fais{L}_\theta[\dim\KKsch{T}]$ 
is an equivariant perverse sheaf on $\KKsch{G}$ of depth zero. 
Moreover, $\RESFAIS{F}{G}{x}$ is a finite direct sum of character sheaves, for each $x\in I(\Ksch{G},\Kq)$.
\end{proposition}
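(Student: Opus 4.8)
The plan is to dispose first of the assertion that $\fais{F}$ is an equivariant perverse sheaf --- indeed already a finite direct sum of character sheaves of $\KKsch{G}$ --- and then to compute $\RESFAIS{F}{G}{x}$ for every $x\in I(\Ksch{G},\Kq)$ by reducing, through conjugation and parabolic restriction, to hyperspecial polyvertices lying in the image of $I(\Ksch{T},\Kq)$, where Corollary~\ref{corollary: induction} applies. For the first point, $\fais{L}_\theta[\dim\KKsch{T}]$ is a character sheaf of $\KKsch{T}$ by Proposition~\ref{proposition: unramified tori} and the description of character sheaves of tori recalled in Subsection~\ref{subsection: local systems}; since a torus has no proper parabolic subgroup this character sheaf is vacuously strongly cuspidal, so by the theory of Subsection~\ref{subsection: induction} (in particular \cite[Thm~9.3.2]{MS}) the sheaf $\fais{F}=\ind^{\KKsch{G}}_{\KKsch{B'}}\fais{L}_\theta[\dim\KKsch{T}]$ is a semisimple $\KKsch{G}$-equivariant perverse sheaf on $\KKsch{G}$ whose irreducible summands are character sheaves of $\KKsch{G}$.

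For the base case I would choose a hyperspecial polyvertex $x_0\in I(\Ksch{G},\Kq)$ lying in the image of $I(\Ksch{T},\Kq)\hookrightarrow I(\Ksch{G},\Kq)$. Such $x_0$ exist: over the unramified splitting extension $\Lq$ the torus $\Lsch{T}$ is split, the hyperspecial polyvertices of its apartment in $I(\Lsch{G},\Lq)$ form a $\Gal(\Lq/\Kq)$-stable subset with nonempty fixed locus, and any fixed member lies in $I(\Lsch{T},\Lq)^{\Gal(\Lq/\Kq)}=I(\Ksch{T},\Kq)$ and remains hyperspecial in $I(\Ksch{G},\Kq)$ because $\Lq/\Kq$ is unramified. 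For such an $x_0$ we have $\nu_{\RRSch{G}{x_0}}=\id$, so $\rkkSch{G}{x_0}=\kkSch{G}{x_0}=\GL(N)_\kkq$, and also $\nu_{\RRSch{T}{x_0}}=\id$ since an unramified torus has reductive special fibre. Corollary~\ref{corollary: induction} then gives $\RESFAIS{F}{G}{x_0}\iso\ind^{\rkkSch{G}{x_0}}_{\mathcal{B}}\bigl(\RES{T}{x_0}\fais{L}_\theta[\dim\KKsch{T}]\bigr)$ for a Borel subgroup $\mathcal{B}$ of $\rkkSch{G}{x_0}$ with Levi $\rkkSch{T}{x_0}$, while the nearby-cycles computation carried out in the proof of Proposition~\ref{proposition: unramified tori} identifies $\RES{T}{x_0}\fais{L}_\theta[\dim\KKsch{T}]=\NC{\RRSch{T}{x_0}}\fais{L}_\theta[\dim\KKsch{T}]$ with a Kummer local system on the maximal torus $\rkkSch{T}{x_0}$ placed in degree $-\dim\KKsch{T}$, \ie a strongly cuspidal character sheaf of $\rkkSch{T}{x_0}$ carrying a canonical Frobenius structure. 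A further application of \cite[Thm~9.3.2]{MS} then shows $\RESFAIS{F}{G}{x_0}$ is a finite direct sum of character sheaves of $\rkkSch{G}{x_0}$, and it is Frobenius-stable because parabolic induction sends a Frobenius-stable sheaf to a canonically Frobenius-stable one (Subsection~\ref{subsection: characteristic function}, \cf\eqref{equation: induction and characteristic functions}).

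To pass to an arbitrary $x\in I(\Ksch{G},\Kq)$, I would use that $\GL(N,\Kq)$ acts transitively on pairs consisting of a chamber of $I(\Ksch{G},\Kq)$ and a polyvertex of that chamber --- the determinant-shifts realising every polyvertex type --- to pick $g\in\Ksch{G}(\Kq)$ with $x_0\leq gx$ in the Bruhat partial order. Theorem~\ref{theorem: restriction} then gives $\RESFAIS{F}{G}{gx}\iso\res^{\rkkSch{G}{x_0}}_{\rkkPsch{G}{x_0\leq gx}}\RESFAIS{F}{G}{x_0}$, with $\rkkPsch{G}{x_0\leq gx}$ a parabolic subgroup of $\GL(N)_\kkq$ whose Levi $\rkkSch{G}{gx}$ is a product of general linear groups. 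Because every summand of $\RESFAIS{F}{G}{x_0}$ is induced from a character sheaf of a maximal torus, the transitivity and Mackey formulas for Lusztig's parabolic restriction and induction (\cite[\S15]{CS}) show that $\res^{\rkkSch{G}{x_0}}_{\rkkPsch{G}{x_0\leq gx}}$ sends it to a finite direct sum of sheaves induced from character sheaves of maximal tori of $\rkkSch{G}{gx}$, hence --- again by \cite[Thm~9.3.2]{MS} --- to a finite direct sum of character sheaves of $\rkkSch{G}{gx}$; Frobenius-stability survives $\res$ by \eqref{equation: FS restriction}. Finally Theorem~\ref{theorem: conjugation} gives $\RESFAIS{F}{G}{x}\iso(\rkkSch{m(g^{-1})}{gx})^{\ast}\RESFAIS{F}{G}{gx}$, and pull-back along the $\kq$-isomorphism $\rkkSch{m(g^{-1})}{gx}$ preserves both ``finite direct sum of character sheaves'' and Frobenius-stability (Subsection~\ref{subsection: characteristic function}, property (i)). Since $x$ was arbitrary, $\fais{F}$ has depth zero (Definition~\ref{definition: depth zero}) and $\RESFAIS{F}{G}{x}$ is a finite direct sum of character sheaves for every $x$.

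The main obstacle, I expect, lies entirely in the last step and is twofold. The first part is the building-theoretic bookkeeping needed to move an arbitrary facet, by an element of $\GL(N,\Kq)$, into the star of a hyperspecial polyvertex of the apartment of $\Ksch{T}$; this rests on transitivity of $\GL(N,\Kq)$ on chamber/polyvertex pairs. The second --- the load-bearing input --- is the claim that Lusztig's parabolic restriction carries the class of finite direct sums of character sheaves induced from maximal tori into itself; this is an instance of the Mackey formula of \cite[\S15]{CS}, entirely standard for $\GL(N)$, where every character sheaf is of principal-series type and all the relevant Levi subgroups are products of general linear groups. Threading the Tate twists and the canonical Frobenius structures consistently through the composite of parabolic induction, parabolic restriction, and pull-back is a further bookkeeping burden, but routine given the compatibilities recorded in Subsection~\ref{subsection: characteristic function}.
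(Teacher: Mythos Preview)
Your proposal is correct and follows essentially the same route as the paper: compute $\RESFAIS{F}{G}{x_0}$ at a hyperspecial polyvertex in the image of $I(\Ksch{T},\Kq)$ via Corollary~\ref{corollary: induction} and Proposition~\ref{proposition: unramified tori}, then propagate to arbitrary $x$ by combining Theorem~\ref{theorem: restriction} with Theorem~\ref{theorem: conjugation} and the transitivity of $\GL(N,\Kq)$ on polyvertices. The paper differs only in minor bookkeeping: it cites \cite[6.9]{CS} directly (rather than the Mackey formula) for the stability of character sheaves under $\res$, and it spells out the Frobenius structure $\varphi_{\fais{F},x}$ explicitly --- including the check that $\varphi_{\fais{F},gx}$ is well-defined when $g$ lies in the parahoric stabiliser, which follows from the equivariance of $\RESFAIS{F}{G}{x}$.
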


\begin{proof}
Observe that $\dim\KKsch{T} = N$ and set $\fais{F} \ceq \ind_\KKsch{B'}^\KKsch{G} \fais{L}[N]$.
We show how to find $\RESFAIS{F}{G}{x}$ for each $x\in I(\Ksch{G},\Kq)$ and observe along the way that each $\RESFAIS{F}{G}{x}$ is a finite direct sum of character sheaves and also show how to define a Frobenius-structure for $\fais{F}$. 

We begin by finding $\RESFAIS{F}{G}{x}$ for $x$ in the standard (poly)vertex $(x_0)$. 
The building for $\Ksch{T}(\Kq)$ is naturally identified with the standard (poly)vertex $(x_0)$ in $I(\Ksch{G},\Kq)$. Let $x_0'$ be the image of $x_0$ in $I(\Lsch{G},\Lq)$. In this case, the special fibre $\kSch{G}{x_0}$ of $\Rsch{G}_{x_0}$ is reductive, so $\rkSch{G}{x_0} = \kSch{G}{x_0}$ and $\nu_{\RSch{G}{x_0}} = \id_{\kSch{G}{x_0}}$; likewise,  the special fibre $\kSch{T}{x_0}$ of $\RSch{T}{x_0}$ is reductive, so $\rkSch{T}{x_0} = \kSch{T}{x_0}$ and and $\nu_{\RSch{T}{x_0}} = \id_{\kSch{T}{x_0}}$. It now follows directly from Corollary~\ref{corollary: induction} that 
\[
\NC{\RRSch{G}{x_0}} \ind_{\KKsch{B'}}^{\KKsch{G}} \fais{L}[N] \iso \ind_{\llBSch{B'}{x_0'}}^{\kkSch{G}{x_0}} \NC{\RRSch{T}{x_0}} \fais{L}[N].
\]
Proposition~\ref{proposition: unramified tori} shows that $\NC{\RRSch{T}{x_0}} \fais{L}[N]$ is a Frobenius-stable character sheaf of $\kkSch{T}{x_0}$ with characteristic function equal to the virtual character $(-1)^N\bar\theta$ of $\kSch{T}{x_0}(\kq)$. To simplify notation slightly, we set $\fais{L}_{\bar\theta} \ceq \NC{\RRSch{T}{x_0}} \fais{L}$ below; thus, 
\begin{equation}\label{equation: induced 1}
\RESFAIS{F}{G}{x_0} = \ind^{\kkSch{G}{x_0}}_{\llBSch{B'}{x_0'}}\NC{\RRSch{T}{x_0}} \fais{L}[N]
= \ind^{\kkSch{G}{x_0}}_{\llBSch{B'}{x_0'}} \fais{L}_{\bar\theta}[N].
\end{equation}
Since $\fais{L}_{\bar\theta}[N]$ is a Frobenius-stable character sheaf of $\kkSch{T}{x_0}$ it 
follows from Subsection~\ref{subsection: characteristic function} that $\ind^{\kkSch{G}{x_0}}_{\llBSch{B'}{x_0'}} \fais{L}_{\bar\theta}[N]$ is also Frobenius-stable and equipped with a canonical isomorphism $\varphi_{\ind^{\kkSch{G}{x_0}}_{\llBSch{B'}{x_0'}} \fais{L}_{\bar\theta}[N]}$ from $\Frob_{\kkSch{G}{x_0}}^*\ \ind^{\kkSch{G}{x_0}}_{\llBSch{B'}{x_0'}} \fais{L}_{\bar\theta}[N]$ to $\ind^{\kkSch{G}{x_0}}_{\llBSch{B'}{x_0'}} \fais{L}_{\bar\theta}[N]$. We define 
\begin{equation}\label{equation: FS x0}
\varphi_{\fais{F},x_0} \ceq \varphi_{\ind^{\kkSch{G}{x_0}}_{\llBSch{B'}{x_0'}} \fais{L}_{\bar\theta}[N]}
\end{equation}
(\cf Subsection~\ref{subsection: characteristic function}).
We also remark that $\RESFAIS{F}{G}{x_0} = \ind^{\kkSch{G}{x_0}}_{\llBSch{B'}{x_0'}} \fais{L}_{\bar\theta}[N]$ is a finite direct sum of character sheaves of $\kkSch{T}{x_0}$ because $\fais{L}_{\bar\theta}[N]$ is a Frobenius-stable character sheaf of $\kkSch{T}{x_0}$ (by \cite[4.8(b)]{CS}, see also \cite[Cor.9.3.3]{MS}).

Next, let $x$ be any element of the Bruhat-Tits building of $\Ksch{G}(\Kq)$ which 
contains $x_0$ in its closure,  so $x_0\leq x$. By Theorem~\ref{theorem: restriction} and the paragraph above, 
\begin{equation}\label{equation: induced 3}
\RESFAIS{F}{G}{x} \iso   \res^{\kkSch{G}{x_0}}_{\kkSch{G}{x_0\leq x}} \  \ind^{\kkSch{G}{x_0}}_{\llBSch{B'}{x_0'}}\ \NC{\RRSch{T}{x_0}} \fais{L}[N].
\end{equation}
We saw above that $\RESFAIS{F}{G}{x_0} = \ind^{\kkSch{G}{x_0}}_{\llBSch{B'}{x_0'}}\ \NC{\RRSch{T}{x_0}} \fais{L}_\theta[N]$ is Frobenius-stable, with a canonical isomorphism (required to define its characteristic function) given by \eqref{equation: FS x0}.
By Subsection~\ref{subsection: characteristic function}, $\RESFAIS{F}{G}{x} = \res^{\kkSch{G}{x_0}}_{\kkSch{G}{x_0\leq x}} \ \RESFAIS{F}{G}{x_0}$ is also Frobenius-stable, with a canonical isomorphism given by
\begin{equation}\label{equation: FS x}
\varphi_{\fais{F},x} \ceq \varphi_{\res^{\kkSch{G}{x_0}}_{\kkSch{G}{x_0\leq x}} \RESFAIS{F}{G}{x}}
\end{equation}
(\cf Subsection~\ref{subsection: characteristic function}).
Moreover, since $\ind^{\kkSch{G}{x_0}}_{\llBSch{B'}{x_0'}}\ \NC{\RRSch{T}{x_0}} \fais{L}_\theta[N]$ is a finite sum of character sheaves, so is \[\res^{\kkSch{G}{x_0}}_{\kkSch{G}{x_0\leq x}} \ind^{\kkSch{G}{x_0}}_{\llBSch{B'}{x_0'}}\ \NC{\RRSch{T}{x_0}} \fais{L}_\theta[N],\] by \cite[6.9]{CS} (\cf also \cite[9.3.2(ii)]{MS}). 

Finally, let $y$ be an arbitrary element of Bruhat-Tits building of $\Ksch{G}(\Kq)$. 
Then there is some $x$ such that $x_0 \leq x$ and some $g\in \Ksch{G}(\Kq)$ such that $y = g x$ 
(we have just used a property of $\GL(N,\Kq)$!). By Theorem~\ref{theorem: conjugation} and \eqref{equation: induced 1},
\begin{equation}\label{equation: induced 2}
\RESFAIS{F}{G}{g x} \iso  {\kkSch{m(g^{-1})}{gx}}^*\ \res^{\kkSch{G}{x_0}}_{\kkSch{G}{x_0\leq x}} \  \ind^{\kkSch{G}{x_0}}_{\llBSch{B'}{x_0'}}\ \NC{\RRSch{T}{x_0}} \fais{L}[N].
\end{equation}
Moreover, since $\kkSch{m(g^{-1})}{gx}$ is defined over $\kq$ and $\ind^{\kkSch{G}{x_0}}_{\llBSch{B'}{x_0'}}\NC{\RRSch{T}{x_0}} \fais{L}[N]$ is Frobenius-stable (by work above), it follows that 
\[
{\kkSch{m(g^{-1})}{gx}}^*\ \ind^{\kkSch{G}{x_0}}_{\llBSch{B'}{x_0'}}\NC{\RRSch{T}{x_0}} \fais{L}[N]
\] 
is also Frobenius-stable. We define
\begin{equation}\label{equation: FS gx}
\varphi_{\fais{F},gx} \ceq \varphi_{\kkSch{m(g^{-1})}{gx}^*\ \RESFAIS{F}{G}{x}}
\end{equation}
(\cf Subsection~\ref{subsection: characteristic function}).
Moreover, $\ind^{\kkSch{G}{x_0}}_{\llBSch{B'}{x_0'}}\NC{\RRSch{T}{x_0}} \fais{L}[N]$ is a finite sum of character sheaves of $\kkSch{T}{x_0}$ (by work above) and since the functor ${\kkSch{m(g^{-1})}{y}}^*$ is exact, it follows that 
\[
{\kkSch{m(g^{-1})}{y}}^*\ \ind^{\kkSch{G}{x_0}}_{\llBSch{B'}{x_0'}}\NC{\RRSch{T}{x_0}} \fais{L}[N]
\] 
is also a finite direct sum of character sheaves of $\kkSch{T}{y}$. 

It only remains to show that $(\varphi_{\fais{F},y})_{y\in I(\Ksch{G},\Kq)}$ is a Frobenius-structure (\cf Definition~\ref{definition: depth zero}). Part (a)
of Definition~\ref{definition: depth zero} is clearly satisfied (use the transitivity of parabolic restriction). 
As for Part (b)
of Definition~\ref{definition: depth zero}, it boils down to showing that
$\varphi_{\fais{F},gx}$ is well-defined by \eqref{equation: FS gx}. We must show that if $g_1x = g_2x$ then $\chf{\varphi_{\fais{F},g_1,x}} = \chf{\varphi_{\fais{F},g_2,x}}$ (here we use notation from the discussion preceding Definition~\ref{definition: Frobenius structure}).
It suffices to show that if $g_0\in \KSch{G}{x}(\Rq)$ then $\chf{\varphi_{\fais{F},g_0,x}} = \chf{\varphi_{\fais{F},x}}$.
Since
\[
\chf{\varphi_{\fais{F},g_0,x}} = \chf{\varphi_{\fais{F},x}}\circ \rkSch{m(g_0^{-1})}{g_0x}
\]
and since $g_0x =x$ and $\RESFAIS{F}{G}{x}$ is equivariant, it follows that
\[
\chf{\varphi_{\fais{F},x}}\circ \rkSch{m(g_0^{-1})}{g_0x}
= \chf{\varphi_{\fais{F},x}},
\] 
as desired.
This concludes the proof of Proposition~\ref{proposition: induced}.
\end{proof}

\begin{remark}
In the context of Proposition~\ref{proposition: induced}, Mackey's formula for character sheaves (\cite[Prop.15.2]{CS} and \cite[Prop.10.1.2]{MS}), together with \eqref{equation: induced 3}, makes it easy to determine $\RESFAIS{F}{G}{x}$ for each $x\in I(\Ksch{G},\Kq)$.
\end{remark}


\subsection{From Representations to Character Sheaves}\label{subsection: representations}

As above, let $\Ksch{G}$ be the algebraic group $\GL(N)_\Kq$. In this section we consider generalised principal series representations of $\GL(N,\Kq)$, by which we mean representations of the form $\Ind^{\Ksch{G}(\Kq)}_{\Ksch{P}(\Kq)} \sigma$ where $\Ksch{P}$ is a parabolic subgroup of $\Ksch{G}$ and $\sigma$ is a supercuspidal representation of the Levi component $\Ksch{L}$ of $\Ksch{P}$. We include the possibility that $\Ksch{L} = \Ksch{P} = \Ksch{G}$ (\ie trivial induction). 

\begin{theorem}\label{theorem: representations}
Let $\pi$ be a generalised principal series representation of $\GL(N,\Kq)$ of depth zero. Then there is a perverse sheaf $\fais{F}$ on $\GL(N)_\KKq$ such that $\fais{F}$ matches $(-1)^N[\pi]$ in the sense of Definition~\ref{definition: matching}.
\end{theorem}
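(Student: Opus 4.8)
The plan is to reduce Theorem~\ref{theorem: representations} to the building blocks already assembled in the paper, namely Proposition~\ref{proposition: induced}, Proposition~\ref{proposition: matching}, and the compatibility of compact restriction with parabolic induction of $p$-adic representations. Write $\pi = \Ind^{\Ksch{G}(\Kq)}_{\Ksch{P}(\Kq)}\sigma$, where $\Ksch{P}$ is a standard parabolic with Levi $\Ksch{L} = \GL(N_1)\times\cdots\times\GL(N_r)$ (in a suitable $\Kq$-form) and $\sigma = \sigma_1\otimes\cdots\otimes\sigma_r$ a supercuspidal representation of depth zero. The key structural input is that a depth-zero supercuspidal $\sigma_i$ of $\GL(N_i,\Kq)$ is compactly induced from a representation of a maximal parahoric whose reductive quotient is $\GL(N_i,\kq_{d_i})$ (with $d_i\mid N_i$), inflated from a Deligne--Lusztig cuspidal representation $\pm R^{\rkSch{G}{x}}_{\kksch{T}}(\theta_i)$ attached to an \emph{elliptic} unramified torus; equivalently $\sigma_i$ is built from a depth-zero character $\theta_i$ of an elliptic unramified maximal torus $\Ksch{T}_i\subset \GL(N_i)$. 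Thus $\pi$ itself is, up to sign, of the shape $\Ind^{\Ksch{G}(\Kq)}_{\Ksch{P}(\Kq)}$ of a product of such data, and globally this is governed by a single unramified (not necessarily split) maximal torus $\Ksch{T}\subset\GL(N)_\Kq$ together with a depth-zero character $\theta$ of $\Ksch{T}(\Kq)$: the torus $\Ksch{T}$ is elliptic in each $\GL(N_i)$-block but split-mod-$\Ksch{L}$ across blocks, so that $\Ksch{T}$ is ``as ramified as'' the ellipticity of $\sigma$ demands and ``as split as'' the parabolic induction allows.

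The candidate sheaf is then exactly the one produced in Proposition~\ref{proposition: induced}: let $\fais{L} = \fais{L}_\theta$ be the Kummer local system on $\KKsch{T}$ attached to $\theta$ by Proposition~\ref{proposition: unramified tori}, pick a Borel $\LBsch{B'}$ of $\Lsch{G}$ over the unramified splitting field $\Lq$ with Levi $\Lsch{T}$, and set $\fais{F} \ceq \ind^{\KKsch{G}}_{\LLBsch{B'}}\fais{L}_\theta[N]$. Proposition~\ref{proposition: induced} already tells us $\fais{F}$ is an equivariant perverse sheaf of depth zero, that each $\RESFAIS{F}{G}{x}$ is a finite direct sum of character sheaves, and it constructs a Frobenius structure $\varphi_\fais{F}$ on $\fais{F}$. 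So the remaining work is purely a matching computation. By Proposition~\ref{proposition: matching} it suffices to check $\CHF{F}{G}{x_i} = (-1)^N\,\Trace\cRes^{\Ksch{G}(\Kq)}_{\RSch{G}{x_i}(\Rq)}\pi$ at the vertices $x_0,\dots,x_{N-1}$ of a single chamber of $I(\Ksch{G},\Kq)$. First I would do the distinguished vertex $x_0$ (the one in the image of $I(\Ksch{T},\Kq)$, so $(x_0)$ is hyperspecial for the block structure): there Corollary~\ref{corollary: induction} gives $\RESFAIS{F}{G}{x_0} \iso \ind^{\kkSch{G}{x_0}}_{\llBSch{B'}{x_0'}}\RES{T}{x_0}\fais{L}_\theta[N]$, and Proposition~\ref{proposition: unramified tori} identifies $\RES{T}{x_0}\fais{L}_\theta[N]$ as the character sheaf on $\kkSch{T}{x_0}$ with characteristic function $(-1)^N\bar\theta$. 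Then \eqref{equation: induction and characteristic functions} converts $\ind$ of character sheaves into $\Ind$ of class functions on finite groups of Lie type, so $\CHF{F}{G}{x_0} = (-1)^N\,\Ind^{\rkSch{G}{x_0}(\kq)}_{\rkkSch{B'}{x_0}(\kq)}\bar\theta$; and by the choice of $\Ksch{T}$, this Deligne--Lusztig induction on the finite reductive quotient $\rkSch{G}{x_0}(\kq)$ is precisely $\Trace\cRes^{\Ksch{G}(\Kq)}_{\RSch{G}{x_0}(\Rq)}\pi$ --- this is exactly the statement that compact restriction of a depth-zero parabolically induced representation is computed by parabolic/Deligne--Lusztig induction on reductive quotients of parahorics, which is the content of the Schneider--Stuhler/Vign\'eras machinery already invoked (\cf \cite[C.1.3]{V2}, \cite[III.4]{SS}) together with the explicit description of depth-zero types of $\GL(N)$.

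For a general vertex $x_j$ of the chamber, I would argue by two reductions already available. If $x_j\geq x_0$ in the Bruhat order after translating by some $g\in\GL(N,\Kq)$ --- which can always be arranged in $\GL(N)$ as used in the proof of Proposition~\ref{proposition: induced} --- then Theorem~\ref{theorem: conjugation} (equivariance) reduces to the case $x_j\geq x_0$, and Theorem~\ref{theorem: restriction} gives $\RESFAIS{F}{G}{x_j}\iso \res^{\rkkSch{G}{x_0}}_{\rkkPsch{G}{x_0\leq x_j}}\RESFAIS{F}{G}{x_0}$, whence by \eqref{equation: restriction and characteristic functions} $\CHF{F}{G}{x_j} = \Res^{\rkSch{G}{x_0}(\kq)}_{\rkPsch{G}{x_0\leq x_j}(\kq)}\CHF{F}{G}{x_0}$. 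On the representation side, \cite[C.1.3]{V2} (used identically in the proof of Proposition~\ref{proposition: matching}) gives $\cRes^{\Ksch{G}(\Kq)}_{\RSch{G}{x_j}(\Rq)}\pi = \Res^{\rkSch{G}{x_0}(\kq)}_{\rkPsch{G}{x_0\leq x_j}(\kq)}\cRes^{\Ksch{G}(\Kq)}_{\RSch{G}{x_0}(\Rq)}\pi$, so the $x_0$-identity propagates. I expect the main obstacle to be the ``input'' step: matching the finite-group class function $\Trace\cRes^{\Ksch{G}(\Kq)}_{\RSch{G}{x_0}(\Rq)}\pi$ with $(-1)^N\Ind^{\rkSch{G}{x_0}(\kq)}_{\rkkSch{B'}{x_0}(\kq)}\bar\theta$ --- this requires knowing precisely which depth-zero supercuspidals of $\GL(N_i,\Kq)$ arise, that they exhaust the depth-zero supercuspidal support of $\pi$, and that the associated finite Deligne--Lusztig datum is the cuspidal $R_{\kksch{T}}(\theta)$ attached to the \emph{same} elliptic unramified torus $\Ksch{T}$; organizing this cleanly (rather than as a long case analysis over the parahoric lattice of $\GL(N)$) is where the real content lies, and it is the step I would write out most carefully, citing the classification of depth-zero representations of $\GL(N)$ and the compatibility of $\cRes$ with $\Ind$ from \cite{V2,SS}.
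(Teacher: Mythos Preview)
Your proposal is correct and follows essentially the same route as the paper: build $\fais{F}=\ind^{\KKsch{G}}_{\LLBsch{B'}}\fais{L}_\theta[N]$ from an unramified maximal torus $\Ksch{T}$ that is elliptic in each block of $\Ksch{L}$, invoke Proposition~\ref{proposition: induced} for the depth-zero Frobenius structure, and then verify matching at the hyperspecial vertex $x_0$ via Corollary~\ref{corollary: induction}, propagating by Proposition~\ref{proposition: matching}.

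Two points where the paper is more careful than your sketch. First, you apply \eqref{equation: induction and characteristic functions} directly to $\llBSch{B'}{x_0'}$, but that identity requires the parabolic to be defined over $\kq$, and $\LBsch{B'}$ is only defined over $\Lq$. The paper fixes this by factoring through the $\Kq$-rational parabolic: by transitivity of sheaf-theoretic induction, $\ind^{\kkSch{G}{x_0}}_{\llBSch{B'}{x_0'}}\fais{L}_{\bar\theta}[N]\iso \ind^{\kkSch{G}{x_0}}_{\kkSch{P}{x_0}}\ind^{\kkSch{L}{x_0}}_{\llBSch{Q'}{x_0'}}\fais{L}_{\bar\theta}[N]$, so one only needs \eqref{equation: induction and characteristic functions} for $\kkSch{P}{x_0}$ (which \emph{is} $\kq$-rational), together with a separate identification of $\chf{\ind^{\kkSch{L}{x_0}}_{\llBSch{Q'}{x_0'}}\fais{L}_{\bar\theta}[N]}$ with $(-1)^N\Trace\bar\sigma$. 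That identification on each block $\GL(N_i,\kq)$ is exactly your ``input step'', and the paper pins it down via Shoji \cite[Cor.~2.3, Rem.~1.12(ii)]{shoji} rather than a general Deligne--Lusztig statement; on the representation side the relevant commutation is \cite[C.1.4]{V2} (you cite C.1.3, which is the restriction version used inside Proposition~\ref{proposition: matching}). Organisationally, the paper packages this as three steps --- match $(-1)^{N_i}[\sigma_i]$ with $\fais{G}_i$ on $\KKsch{L_i}$, take $\boxtimes$ to match $(-1)^N[\sigma]$ with $\fais{G}$ on $\KKsch{L}$, then induce --- which is precisely the factorisation that makes the $\kq$-rationality issue disappear.

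Second, your step for the other vertices $x_j$ invokes $x_0\leq x_j$ in the Bruhat order, but two distinct vertices of a chamber are never comparable. What you actually need (and what the paper uses) is simply that in $\GL(N)$ every vertex is $\Ksch{G}(\Kq)$-conjugate to $x_0$; the Frobenius-structure condition~(b) then transports the $x_0$-identity to every vertex, after which Proposition~\ref{proposition: matching} finishes.
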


\begin{proof}
By definition, there is a parabolic subgroup $\Ksch{P}$ of $\Ksch{G}$ (not necessarily a proper subgroup) and a depth zero 
supercuspidal representation $\sigma$ of the Levi component $\Ksch{L}$ of $\Ksch{P}$ such that $\pi = \Ind^{\Ksch{G}(\Kq)}_{\Ksch{P}(\Kq)} \sigma$. 

Let $\Ksch{T}$ be an unramified maximal torus of $\Ksch{G}$ such that $\Ksch{T}\subseteq \Ksch{L}$ and $\Ksch{T}$ is an elliptic maximal torus of $\Ksch{L}$. (Up to conjugation, such a torus is unique.) Let $\Lq$ be a splitting extension for $\Ksch{T}$. Following our convention, we write $\Lsch{T}$ for $\Ksch{T}\times_\Spec{\Kq} \Spec{\Lq}$ and write $\Lsch{L}$ for $\Ksch{L}\times_\Spec{\Kq} \Spec{\Lq}$. Let $\LBsch{Q'}$ be a Borel subgroup of $\Lsch{L}$ with Levi component $\Lsch{T}$ and let $\LBsch{B'}$ be a Borel subgroup of $\Lsch{G}$ such that $\LBsch{B'}\subset \LBsch{P}$ and $\LBsch{B'}$ has Levi component $\Lsch{T}$.  If $\Ksch{T}$ is elliptic in $\Ksch{G}$ then $\Ksch{L} = \Ksch{G}$ and $\LBsch{P}=\Ksch{G}$ and $\LBsch{Q'} = \LBsch{B'}$; at the other extreme, if  $\Ksch{T}$ is split then $\Lq=\Kq$ and $\Ksch{L} = \Ksch{T}$ and $\LBsch{P}=\LBsch{B'}$ and $\LBsch{Q'} = \Ksch{T}$. In all cases, the square in the diagram below is 
commutative. As usual, we write $\Lsch{P}$ for $\Ksch{P}\times_\Spec{\Kq} \Spec{\Lq}$.
\begin{equation}
	\xymatrix{
&& \ar@{->>}[dl] \LBsch{B'} \ar@{>->}[dr] && \\
& \ar@{->>}[dl] \LBsch{Q'} \ar@{>->}[dr] && \ar@{->>}[dl] \Lsch{P} \ar@{>->}[dr] & \\
\Lsch{T} && \Lsch{L} && \Lsch{G} 
	}
\end{equation}
We may also assume $\Ksch{L}$ is in standard position, in which case $\Ksch{L} = \prod_{i=1}^n \Ksch{L_i}$ where each $\Ksch{L_i}$ is just $\GL(N_i)_\Kq$. Then $\Ksch{T} = \prod_{i=1}^n \Ksch{T_i}$ where $\Ksch{T_i}$ is an elliptic maximal unramified torus of $\Ksch{L_i}$; let $\Kq_i$ be a splitting field for $\Ksch{T_i}$ and let $N_i = [\Kq_i: \Kq]$; then $\sum_{i=1}^n N_i =N$.
Likewise, $\LBsch{Q'}$ admits the decomposition $\LBsch{Q'} = \prod_{i=1}^n \LBsch{Q'_i}$, where $\LBsch{Q'_i}$ is a Borel subgroup of $\Lsch{L_i}$ with Levi component $\Lsch{T_i}$.

Now, the building $I(\Ksch{T},\Kq)$ embeds canonically into the building $I(\Ksch{L},\Kq)$, which embeds canonically into the building $I(\Ksch{G},\Kq)$. Fix a (poly)vertex $x_0\in I(\Ksch{T},\Kq)$; we will identify $x_0$ with its image in $I(\Ksch{L},\Kq)$ and $I(\Ksch{G},\Kq)$, and denote its image in
$I(\Ksch{G}_\Lq,\Lq)$ by $x_0'$. Since we are working with general linear groups, all vertices in the buildings are hyperspecial.
Set $x_0 = (x_1, \ldots , x_n)$, so $x_i\in I(\Ksch{T_i},\Kq)$ lies is a  
vertex in $I(\Ksch{L_i},\Kq)$; likewise, set $x_0' = (x_1', \ldots , x_n')$, so $x_i'\in I(\Lsch{T_i},\Lq)$ lies in a (poly)vertex in $I(\Lsch{L_i},\Lq)$. Let $\RLBSch{Q'_i}{x_i'}$ be the schematic closure of $\LBsch{Q'_i}$ in $\RLBSch{L_i}{x_i'}$ (studied in Subsubsection~\ref{subsubsection: flag}); then $\RLBSch{Q'}{x_0'} = \prod_{i=1}^n \RLBSch{Q'_i}{x_i'}$ is the schematic closure of $\LBsch{Q'}$ in $\RLBSch{L}{x_0'}$.

Without loss of generality, we assume $\sigma$ is irreducible. 
Then $\sigma = \mathop{\otimes}_{i=1}^n \sigma_i$, where $\sigma_i$ is a supercuspidal irreducible representation of $\Ksch{L_i}(\Kq) = \GL(N_i,\Kq)$ of depth zero.

The proof of Theorem~\ref{theorem: representations} has three parts: first, for each $i$, we make an equivariant perverse sheaf $\fais{G}_i$ of depth zero on $\KKsch{L}_i$ which matches $\sigma_i$; then, we make an equivariant perverse sheaf $\fais{G}$ of depth zero on $\KKsch{L}$ which matches $\sigma$; finally, we make an equivariant perverse sheaf $\fais{F}$ of depth zero which matches $\pi$.

Fix $i$  and suppose $N_i > 1$. Consider the representation 
\[
\bar\sigma_i \ceq \cRes^{\Ksch{L_i}(\Kq)}_{\RSch{L_i}{x_i}(\Rq)} \sigma_i.
\]
Since $\sigma_i$ is a supercuspidal, irreducible representation of depth zero, $\bar\sigma_i$ is cuspidal and irreducible. 
Then $\kSch{T_i}{x_i}$ is an unramified anisotropic torus in $\kSch{L_i}{x_i}$ (unique up to conjugacy).

From \cite[Cor. 2.3]{shoji}, \cite[Rem. 1.12(ii)]{shoji}, 
and the description of cuspidal representations of the general linear
group over a finite field in the remark after \cite[Thm. 8.8]{srinivasan}, it follows that
for each cuspidal irreducible representation $\bar\sigma_i$ of $\kSch{L_i}{x_i}(\kq) = \GL(N_i,\kq)$ there is a character $\bar\theta_i : \kSch{T_i}{x_i}(\kq) \to \EE$ and a local system $\fais{L}_{\bar\theta_i}$ such that 
\begin{equation}\label{equation: representations 1}
\Trace\bar\sigma_i = \chf{\ind^{\kkSch{L_i}{x_i}}_{\llBSch{Q'_i}{x_i'}}\fais{L}_{\bar\theta_i}}.
\end{equation}

Let $\theta_i$ be a character of $\Ksch{T}(\Kq)$ such that $\bar\theta_i = \cRes^{\Ksch{T_i}(\Kq)}_{\RSch{T_i}{x_i}(\Rq)}\theta_i$. Using Proposition~\ref{proposition: unramified tori}, let $\fais{L}_{\theta_i}$ be a Kummer local system on $\KKsch{T_i}$ that matches the character $\theta_i : \Ksch{T_i}(\Kq) \to \EE^\times$. 

Define \[\fais{G}_i \ceq \ind^{\KKsch{L_i}}_{\LLBsch{Q'_i}} \fais{L}_{\theta_i}[N_i].\] 
Proposition~\ref{proposition: induced} shows that the equivariant perverse sheaf $\fais{G}_i$ has depth zero by showing how to define a Frobenius-structure $\varphi_{\fais{G}_i}$ for $\fais{G}_i$. 
We claim that $\fais{G}_i$ matches $\sigma_i$. 

Begin by using Corollary~\ref{corollary: induction} and Proposition~\ref{proposition: unramified tori} (and the definition of $\fais{G}_i$) to see that
\begin{eqnarray*}
\NC{\RRSch{L_i}{x_i}}\ \fais{G}_i 
&=& \NC{\RRSch{L_i}{x_i}}\ \ind^{\KKsch{L_i}}_{\LLBsch{Q'_i}} \fais{L}_{\theta_i}[N_i]  \\
&\iso&  \ind^{\kkSch{L_i}{x_i}}_{\llSch{Q'_i}{x_i'}} \NC{\RRSch{T_i}{x_i}}\ \fais{L}_{\theta_i}[N_i]  \\
&=&  \ind^{\kkSch{L_i}{x_i}}_{\llSch{Q'_i}{x_i'}} \fais{L}_{\bar\theta_i}[N_i] .
\end{eqnarray*}
Thus, $\NC{\RRSch{L_i}{x_i}}\ \fais{G}_i$ is Frobenius-stable (by Subsection~\ref{subsection: characteristic function} and the fact that $\fais{L}_{\bar\theta_i}[N_i]$ is Frobenius-stable), and \eqref{equation: representations 1} implies that
\begin{eqnarray}\label{equation: representations 2}
\chf{\fais{G}_i,x_i}
&= \chf{\ind^{\kkSch{L_i}{x_i}}_{\llSch{Q'_i}{x_i'}} \fais{L}_{\bar\theta_i}[N_i]}
= (-1)^{N_i} \Trace\bar\sigma_i.
\end{eqnarray}
Since $\Ksch{L_i}(\Kq) = \GL(N_i,\Kq)$, every (poly)vertex in $I(\Ksch{L_i},\Kq)$ is conjugate to the one containing $x_i$,
and it follows from Proposition~\ref{proposition: matching} and \eqref{equation: representations 2} that $\fais{G}_i$ matches the virtual representation $(-1)^{N_i}[\sigma_i]$. 

The case $N_i=1$ is simpler, and really just a degenerate case of the paragraph above: when $N_i =1$, $\Kq_i = \Kq$, and $\Ksch{L_i} = \Ksch{T_i}$, so $\sigma_i = \theta_i$ and $\fais{G}_i \ceq \fais{L}_{\theta_i}[1]$. This completes the first step in the proof of Theorem~\ref{theorem: representations}. 

For the second step in the proof, simply define
\begin{equation}
\fais{G} = \mathop{\boxtimes}_{i=1}^n \fais{G}_i.
\end{equation}
Observe that this is a character sheaf of $\KKsch{L}$ (\cf \cite[5.4.1]{MS} for example).
Recall that $x_0 = (x_1, \ldots, x_n)$. Then
\begin{equation}
\NC{\RRSch{L}{x_0}}\ \fais{G} = \mathop{\boxtimes}_{i=1}^n \NC{\RRSch{L}{x_i}} \fais{G}_i,
\end{equation}
and, from the Subsection~\ref{subsection: characteristic function} we have
\begin{equation}
\chf{\fais{G},x_0} = \mathop{\otimes}_{i=1}^n \chf{\fais{G}_i,x_i}.
\end{equation}
Arguing as above, it follows from Proposition~\ref{proposition: matching} that $\fais{G}$ matches $(-1)^N[\sigma]$.

For the third step in the proof, recall that $\Ksch{P}$ is a parabolic subgroup of $\Ksch{G}$ with Levi component $\Ksch{L}$ and define
\begin{equation}
\fais{F} = \ind^{\KKsch{G}}_{\KKsch{P}} \fais{G}.
\end{equation}
By the transitivity of parabolic induction (\cite[Prop.4.2]{CS}), 
\begin{equation}
\fais{F} = \ind^{\KKsch{G}}_{\KKsch{B'}} \fais{L}_\theta[N].
\end{equation}
It only remains to show that $\fais{F}$ matches the virtual representation $(-1)^N[\pi]$. 
Again, using Proposition~\ref{proposition: matching}, we see that it is sufficient to show
\begin{equation}\label{equation: representations 4}
(-1)^N\CHF{F}{G}{x_0} = \Trace\cRes^{\Ksch{G}(\Kq)}_{\RSch{G}{x_0}(\Rq)} \pi.
\end{equation}

To that end, consider the following diagram of integral schemes, 
\begin{equation}
	\xymatrix{
&& \ar@{->>}[dl] \RRSch{B'}{x_0'} \ar@{>->}[dr] && \\
& \ar@{->>}[dl] \RRSch{Q'}{x_0'} \ar@{>->}[dr] && \ar@{->>}[dl] \RRSch{P}{x_0'} \ar@{>->}[dr] & \\
\RRSch{T}{x_0'} && \RRSch{L}{x_0'} && \RRSch{G}{x_0'} 
	}
\end{equation}
in which $\RRSch{P}{x_0'}$ (resp. $\RRSch{Q'}{x_0'}$) denotes the integral closure of $\KKsch{P}$ in $\RRSch{G}{x_0'}$ (resp. $\KKsch{Q'}$ in $\RRSch{G}{x_0'}$).

Now, back to the left-hand side of \eqref{equation: representations 4}. Consider 
\[
\CHF{F}{G}{x_0} =  \chf{\RESFAIS{F}{G}{x_0}} =  \chf{\NC{\RRSch{G}{x_0}} \fais{F}}.
\]
Recall that $\fais{F} = \ind^{\KKsch{G}}_{\KKsch{B'}} \fais{L}_\theta[N]$. Define $\fais{L}_{\bar\theta} \ceq \mathop{\boxtimes}_{i=1}^n \fais{L}_{\bar\theta_i}$. By Corollary~\ref{corollary: induction}, 
\[
\NC{\RRSch{G}{x_0}} \fais{F} = \ind^{\kkSch{G}{x_0}}_{\llBSch{B'}{x_0'}} \NC{\RRSch{T}{x_0}}  \fais{L}_\theta[N].
\]
Thus,
\[
\chf{\NC{\RRSch{G}{x_0}} \fais{F}} 
= \chf{\ind^{\kkSch{G}{x_0}}_{\llBSch{B'}{x_0'}} \NC{\RRSch{T}{x_0}}  \fais{L}_\theta[N]} 
= \chf{\ind^{\kkSch{G}{x_0}}_{\llBSch{B'}{x_0'}} \NC{\RRSch{T}{x_0}}  \fais{L}_\theta[N]} .
\]
 Then, by construction, 
\[
\NC{\RRSch{T}{x_0}}  \fais{L}_\theta[N] = \fais{L}_{\bar\theta} [N].
\]
Thus,
\[
\chf{\ind^{\kkSch{G}{x_0}}_{\llBSch{B'}{x_0'}} \NC{\RRSch{T}{x_0}}  \fais{L}} 
 = \chf{\ind^{\kkSch{G}{x_0}}_{\llBSch{B'}{x_0'}} \fais{L}_{\bar\theta}} 
\]
By the paragraph above and the transitivity of parabolic induction (\cite[Prop.4.2]{CS}),
\[
\ind^{\kkSch{G}{x_0}}_{\llBSch{B'}{x_0'}} \fais{L}_{\bar\theta}[N]
\iso \ind^{\kkSch{G}{x_0}}_{\kkSch{P}{x_0}} \ind^{\kkSch{L}{x_0}}_{\kkSch{Q'}{x_0}} \fais{L}_{\bar\theta}[N].
\]
Thus, 
\[
\chf{\ind^{\kkSch{G}{x_0}}_{\llBSch{B'}{x_0'}} \fais{L}_{\bar\theta}} = \chf{\ind^{\kkSch{G}{x_0}}_{\kkSch{P}{x_0}} \ind^{\kkSch{L}{x_0}}_{\kkSch{Q'}{x_0}} \fais{L}_{\bar\theta}}.
\]
Referring back the the diagrams above, notice that $\kSch{P}{x_0}$ is a parabolic subgroup of $\kkSch{G}{x_0}$ over $\kq$; thus, 
\[
\chf{\ind^{\kkSch{G}{x_0}}_{\kkSch{P}{x_0}} \ind^{\kkSch{L}{x_0}}_{\kkSch{Q'}{x_0}} \fais{L}_{\bar\theta}}
= \Ind^{\kSch{G}{x_0}(\kq)}_{\kSch{P}{x_0}(\kq)} \chf{\ind^{\kkSch{L}{x_0}}_{\kkSch{Q'}{x_0}} \fais{L}_{\bar\theta}}.
\]
(This follows from basic properties of the {\it dictionnaire fonctions-faisceaux}, as remarked during the proof of \cite[Prop.15.2]{CS}.) Now, referring to the definition of $\fais{G}$ above, it follows from Theorem~\ref{theorem: induction} that
\[
\ind^{\kkSch{L}{x_0}}_{\kkSch{Q'}{x_0}} \fais{L}_{\bar\theta} = \NC{\RRSch{L}{x_0}} \fais{G}.
\]
Thus,
\[
\Ind^{\kSch{G}{x_0}(\kq)}_{\kSch{P}{x_0}(\kq)} \chf{\ind^{\kkSch{L}{x_0}}_{\kkSch{Q'}{x_0}} \fais{L}_{\bar\theta}}
=  \Ind^{\kSch{G}{x_0}(\kq)}_{\kSch{P}{x_0}(\kq)} \chf{ \NC{\RRSch{L}{x_0}} \fais{G}}.
\]
But, during the second step in the proof, we saw that $\fais{G}$ matched $\sigma$, which means that 
\[
\Ind^{\kSch{G}{x_0}(\kq)}_{\kSch{P}{x_0}(\kq)} \chf{ \NC{\RRSch{L}{x_0}} \fais{G}}
= \Ind^{\kSch{G}{x_0}(\kq)}_{\kSch{P}{x_0}(\kq)} \Trace\cRes^{\Ksch{L}(\Kq)}_{\RSch{L}{x_0}(\Rq)}\sigma.
\]
Finally, since $\sigma$ is cuspidal, and since $\pi = \Ind^{\Ksch{G}(\Kq)}_{\Ksch{P}(\Kq)} \sigma$, it follows  that
\[
\cRes^{\Ksch{G}(\Kq)}_{\RSch{G}{x_0}(\Rq)} \pi
= \cRes^{\Ksch{G}(\Kq)}_{\RSch{G}{x_0}(\Rq)} \Ind^{\Ksch{G}(\Kq)}_{\Ksch{P}(\Kq)} \sigma.
\]
From \cite[C.1.4]{V2}, we have 
\[
\cRes^{\Ksch{G}(\Kq)}_{\RSch{G}{x_0}(\Rq)} \Ind^{\Ksch{G}(\Kq)}_{\Ksch{P}(\Kq)} \sigma 
= \Ind^{\kSch{G}{x_0}(\kq)}_{\kSch{P}{x_0}(\kq)} \cRes^{\Ksch{L}(\Kq)}_{\RSch{L}{x_0}(\Rq)}\sigma.
\]
Thus,
\[
\Ind^{\kSch{G}{x_0}(\kq)}_{\kSch{P}{x_0}(\kq)} \Trace\cRes^{\Ksch{L}(\Kq)}_{\RSch{L}{x_0}(\Rq)}\sigma 
= \Trace \cRes^{\Ksch{G}(\Kq)}_{\RSch{G}{x_0}(\Rq)} \pi.
\]
This concludes the proof of \eqref{equation: representations 4} and thus concludes the proof of Theorem~\ref{theorem: representations}.
\end{proof}

\begin{remark}\label{remark: decomposition}
The generalised principal series representation $\pi$ need not be irreducible, and the matching sheaf $(-1)^N\fais{F}$ need not be a character sheaf (although $\fais{F}$ is a perverse sheaf and a finite direct sum of character sheaves); however, as we will show elsewhere, when $\pi$ is irreducible, $\fais{F}$ is indeed a character sheaf.
\end{remark}

\bibliographystyle{amsalpha}

\end{document}